\newtheorem{theorem}{Theorem}[section]
\newtheorem{lemma}[theorem]{Lemma}
\newtheorem{corollary}[theorem]{Corollary}
\newtheorem{proposition}[theorem]{Proposition}
\theoremstyle{definition}
\newtheorem{definition}[theorem]{Definition}
\newtheorem{example}[theorem]{Example}
\theoremstyle{remark}
\newtheorem{remark}[theorem]{Remark}
\numberwithin{equation}{section}
\begin{document}

\title[Special K\"ahler geometry and topological recursion]{Special K\"ahler geometry of the Hitchin system and topological recursion}
\author{David Baraglia}
\author{Zhenxi Huang}

\address{School of Mathematical Sciences, The University of Adelaide, Adelaide SA 5005, Australia}

\email{david.baraglia@adelaide.edu.au}
\email{zhenxi.huang@adelaide.edu.au}

\begin{abstract}
We investigate the special K\"ahler geometry of the base of the Hitchin integrable system in terms of spectral curves and topological recursion. The Taylor expansion of the special K\"ahler metric about any point in the base may be computed by integrating the $g = 0$ Eynard-Orantin invariants of the corresponding spectral curve over cycles. In particular, we show that the Donagi-Markman cubic is computed by the invariant $W^{(0)}_3$. We use topological recursion to go one step beyond this and compute the symmetric quartic of second derivatives of the period matrix.

\end{abstract}
\thanks{D. Baraglia is financially supported by the Australian Research Council Discovery Early Career Researcher Award DE160100024.}

\subjclass[2010]{Primary 53C26, 14H15; Secondary 14H60, 14H81}

\date{\today}



\maketitle


\section{Introduction}\label{sec:intro}

The Hitchin integrable system \cite{hit1,hit3} ties together many seemingly different branches of geometry and physics, including twistor theory, integrable systems, mirror symmetry and supersymmetric Yang-Mills theory to name just a few. The goal of this paper is to elucidate one particular aspect of this rich story, namely the relation between the special K\"ahler geometry of the base of the Hitchin system with the theory of Eynard-Orantin topological recursion of the spectral curves.

Suppose that $f : \mathcal{M} \to B$ is a complex integrable system, so $\mathcal{M}$ is a complex manifold with a holomorphic symplectic form and $f$ is a complex Lagrangian fibration, whose generic fibres are complex tori. In general $f$ will have singular fibres so let $f : \mathcal{M}^{\rm reg} \to B^{\rm reg}$ denote the regular locus consisting of the non-singular torus fibres of $f$. As we recall in \textsection \ref{sec:complexlag}, under mild assumptions, there is a naturally defined metric $g^{\rm sk}$ on the base $B^{\rm reg}$, known as a {\em special K\"ahler metric} \cite{dw,Freed,Hit2}. We recall the fundamentals of special K\"ahler geometry in \textsection \ref{sec:skg}. In particular, $B^{\rm reg}$ has a K\"ahler metric of the form
\begin{equation}\label{equ:skmetric}
\omega = -\frac{i}{2} Im( \tau_{ij} ) dz^i \wedge d\overline{z}^j
\end{equation}
where $\tau_{ij}$ is a matrix of functions, the periods of the torus fibres of the integrable system. It is well known that the special K\"ahler metric on $B^{\rm reg}$ can be combined with a metric along the fibres to produce a hyperk\"ahler metric $g^{\rm sf}$ on $\mathcal{M}^{\rm reg}$, known as the {\em semi-flat hyperk\"ahler metric} \cite{cfg,Freed,Hit2}. This metric is called ``semi-flat" because its restriction to the fibres of $f : \mathcal{M}^{\rm reg} \to B^{\rm reg}$ is flat. 

In this paper we concentrate on the case that $f : \mathcal{M} \to B$ is the Hitchin system. Then $\mathcal{M} = \mathcal{M}_{n,d}$ is the {\em moduli space of Higgs bundles} of given rank $n$ and degree $d$. In this case $\mathcal{M}$ is known to admit a complete hyperk\"ahler metric $g$ \cite{hit1}. The semi-flat metric on $\mathcal{M}^{\rm reg}$ may be thought of as an approximation of the complete hyperk\"ahler metric. The semi-flat metric fails to extend over the singular fibres, however it is expected that $g$ can be recovered from $g^{\rm sf}$ by incorporating instanton corrections \cite{gmn}. 

To define the Hitchin system, one takes a compact Riemann surface $\Sigma$ of genus greater than $1$ and $\mathcal{M}$ to be the moduli space of semistable Higgs bundles of fixed rank $n$ and degree $d$ (see \textsection \ref{sec:higgsb}). In this case, the period matrices $\tau_{ij}(b)$ for $b \in B^{\rm reg}$ are not arbitrary, in fact they are the periods of a Riemann surface $S_b$, the {\em spectral curve} associated to the point $b \in B^{\rm reg}$. The spectral curve $S_b$ is a smooth compact Riemann surface $S_b \subset T^*\Sigma$ embedded in the cotangent bundle of $\Sigma$, such that the projection $\pi : S_b \to \Sigma$ is a branched covering of degree $n$. We now give a brief summary of the special K\"ahler geometry on $B^{\rm reg}$ in terms of spectral curves. Let $\theta$ be the canonical $1$-form on $T^*\Sigma$. A pair of local holomorphic coordinate systems $(z^1 , \dots , z^{g_S})$ and $(w_1 , \dots , w_{g_S})$ for $B^{\rm reg}$ are given by integrating the canonical $1$-form over a basis of $1$-cycles in $S_b$:
\[
z^i = \int_{a_i} \theta, \quad \quad w_i = \int_{b_i} \theta,
\]
where $a_1 , \dots , a_{g_S} , b_1 , \dots , b_{g_S}$ is a symplectic basis of $1$-cycles, which may be defined over any sufficiently small neighbourhood in $B^{\rm reg}$. The monodromy of the Hitchin system prevents us from choosing such a basis globally on $B^{\rm reg}$. Having chosen such a basis of $1$-cycles, we have the normalised basis of holomorphic $1$-forms $\omega_1 , \dots , \omega_{g_S}$ characterised by $\int_{a_i} \omega_j = \delta_{ij}$. The period matrix of $S_b$ is then given by $\tau_{ij} = \int_{b_i} \omega_j$. With respect to the coordinate system $(z^1 , \dots , z^{g_S})$, the special K\"ahler metric is given by Equation (\ref{equ:skmetric}). Alternatively, we have a real coordinate system $(x^1 , \dots , x^{g_S} , y_1 , \dots , y_{g_S})$ given by:
\[
x^i = Re(z^i) = Re\left( \int_{a_i} \theta \right), \quad \quad y_i = Re(w_i) = Re \left(\int_{b_i} \theta \right).
\]
These coordinates are globally defined up to monodromy, which acts by linear transformations, hence they define an affine structure on $B^{\rm reg}$. Moreover they are Darboux coordinates for the K\"ahler form:
\[
\omega = dx^1 \wedge dy_1 + \dots + dx^{g_S} \wedge dy_{g_S}.
\]
Recall the {\em prepotential} (see \textsection \ref{sec:skg}) is a locally defined holomorphic function $\mathcal{F}$ on $B^{\rm reg}$ such that $w_i = \frac{\partial \mathcal{F} }{\partial z^i}$. We will recall that the period matrix is given by $\tau_{ij} = \frac{\partial w_i}{\partial z^j}$ and thus 
\[
\tau_{ij} = \frac{\partial^2 \mathcal{F}}{\partial z^i \partial z^j}.
\]
From the prepotential $\mathcal{F}$, one obtains a K\"ahler potential $K$ by:
\begin{equation*}
K = -\frac{1}{2} Im \left( \frac{\partial \mathcal{F}}{\partial z^i} \overline{z}^i \right).
\end{equation*}

An important quantity in special K\"ahler geometry is the symmetric cubic $c \in H^0( \mathcal{M} , Sym^3( T_\mathcal{M} ) )$ which measures the variation of the period matrix $\tau_{ij}$:
\[
c = c_{ijk} dz^i \otimes dz^j \otimes dz^k, \quad \quad c_{ijk} = \frac{\partial \tau_{jk}}{\partial z^i} = \frac{\partial^3 \mathcal{F} }{\partial z^i \partial z^j \partial z^k}.
\]
We call $c$ the {\em Donagi-Markman cubic}, since in the case where $\mathcal{M}$ is the base of a complex integrable system, $c$ is the cubic studied by Donagi and Markman \cite{dm}. One can for instance express the Riemann, Ricci and scalar curvatures in terms of $\tau_{ij}$ and $c_{ijk}$ \cite{Freed}. The following proposition (cf. \cite{hhp} for similar results) summarises the relation between the special K\"ahler geometry on $B^{\rm reg}$ and the spectral curves:
\begin{proposition}
\begin{itemize}
We have the following relations:
\item[(1)]{A (local) prepotential for the special K\"ahler structure on $B^{\rm reg}$ is given by:
\[
\mathcal{F} = \frac{1}{2} z^i w_i = \frac{1}{2} \tau_{ij} z^i z^j.
\]
}
\item[(2)]{A (global) K\"ahler potential is given by:
\[
K = -\frac{i}{4} \int_{S} \theta \wedge \overline{\theta}.
\]
}
\item[(3)]{Let $\mathcal{V}_{\mathbb{C}}$ be the local system on $B^{\rm reg}$ whose fibre over $b$ is $H^1( S_b , \mathbb{C})$. We may think of $\theta$ as a section of $\mathcal{V}_{\mathbb{C}}$. The Donagi-Markman cubic is given by the ``Yukawa couplings":
\[
c(X,Y,Z) = \int_S \nabla_X \nabla_Y \nabla_Z \theta \wedge \theta,
\]
where $\nabla$ is the Gauss-Manin connection.
}
\end{itemize}
\end{proposition}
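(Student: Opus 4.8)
The plan is to prove the three parts in order, with the homogeneity established in part (1) re-entering crucially in part (3). For part (1), the essential input is the $\mathbb{C}^{*}$-action scaling the fibres of $T^{*}\Sigma$. Scaling the cotangent fibres by $\lambda$ is a biholomorphism of $T^{*}\Sigma$ that carries the spectral curve $S_b$ to the spectral curve $S_{\lambda\cdot b}$ over the rescaled point $\lambda\cdot b$ and sends $\theta$ to $\lambda\theta$. I would first note that this biholomorphism identifies $(S_b,\{a_i,b_i\})$ with $(S_{\lambda\cdot b},\{a_i,b_i\})$ as marked Riemann surfaces, so the period matrix is scale-invariant, $\tau_{ij}(\lambda\cdot b)=\tau_{ij}(b)$, whereas $z^i=\int_{a_i}\theta$ and $w_i=\int_{b_i}\theta$ scale linearly. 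Hence $\tau_{ij}$ is homogeneous of degree $0$ in the $z^i$, so $w_i=\partial\mathcal{F}/\partial z^i$ is homogeneous of degree $1$ and $\mathcal{F}$ is homogeneous of degree $2$. Euler's relation then gives $z^i\,\partial\mathcal{F}/\partial z^i=2\mathcal{F}$, that is $\mathcal{F}=\tfrac12 z^i w_i$; applied to the degree-one $w_i$ it also gives $w_i=z^j\,\partial w_i/\partial z^j=\tau_{ij}z^j$, and therefore $\mathcal{F}=\tfrac12\tau_{ij}z^iz^j$.

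For part (2), I would start from the definition $K=-\tfrac12\,\mathrm{Im}(\partial\mathcal{F}/\partial z^i\cdot\overline{z}^i)=-\tfrac12\,\mathrm{Im}(w_i\overline{z}^i)$ and evaluate the proposed integral via the Riemann bilinear relations. Since $\theta|_{S_b}$ is holomorphic, both $\theta$ and $\overline{\theta}$ are closed, so the bilinear relation gives $\int_S\theta\wedge\overline{\theta}=\sum_i(z^i\overline{w}_i-w_i\overline{z}^i)=-2i\,\mathrm{Im}(w_i\overline{z}^i)$. Multiplying by $-i/4$ returns exactly $-\tfrac12\,\mathrm{Im}(w_i\overline{z}^i)=K$; since the coordinates $x^i,y_i$ are global up to monodromy and the integral is manifestly invariant, this potential is global.

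For part (3) it suffices to check the identity on coordinate fields, i.e. to prove $c_{ijk}=\int_S\nabla_i\nabla_j\nabla_k\theta\wedge\theta$ with $\nabla_i=\nabla_{\partial/\partial z^i}$. Two facts drive the computation: the Gauss-Manin connection differentiates periods along the locally constant cycles $a_l,b_l$, so $\int_{a_l}\nabla_i\theta=\partial_{z^i}z^l$ and $\int_{b_l}\nabla_i\theta=\partial_{z^i}w_l$; and the wedge pairing on $H^1(S_b,\mathbb{C})$ is computed by the same period formula used in part (2). The $a$-periods of $\nabla_i\nabla_j\nabla_k\theta$ then vanish, while its $b$-periods equal $\partial_{z^i}\partial_{z^j}\partial_{z^k}w_l=\partial^4_{ijkl}\mathcal{F}$, so the bilinear relation yields $\int_S\nabla_i\nabla_j\nabla_k\theta\wedge\theta=-z^l\,\partial^4_{ijkl}\mathcal{F}$. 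Now part (1) re-enters: the third derivatives of the degree-two function $\mathcal{F}$ are homogeneous of degree $-1$, so Euler's relation gives $z^l\,\partial^4_{ijkl}\mathcal{F}=-\partial^3_{ijk}\mathcal{F}=-c_{ijk}$, whence the integral equals $c_{ijk}$.

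The step I expect to be the main obstacle is part (1): rigorously justifying that fibre scaling preserves the marked period matrix, and confirming the existence of a single-valued prepotential $\mathcal{F}$ on a simply connected patch, so that the homogeneity statements are legitimate. This homogeneity is the conceptual crux rather than a cosmetic remark, since it is precisely what reconciles the three Gauss-Manin derivatives in $\nabla_i\nabla_j\nabla_k\theta$ with the third-order cubic $c_{ijk}$, where a naive derivative count in part (3) would instead predict a fourth derivative of $\mathcal{F}$.
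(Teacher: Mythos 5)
Your proposal is correct, and in parts (2) and (3) it essentially reproduces the paper's own arguments: part (2) is the identical Riemann bilinear relations computation reducing $-\frac{i}{4}\int_S \theta\wedge\overline{\theta}$ to $-\frac{1}{2}\mathrm{Im}(w_i\overline{z}^i)$, and in part (3) your evaluation $\int_S \nabla_i\nabla_j\nabla_k\theta\wedge\theta = -z^l\,\partial_i\partial_j\partial_k w_l$ by bilinear relations, followed by Euler's relation for the degree $(-1)$ homogeneous functions $\partial_j\partial_k w_i$, is exactly the paper's computation; the paper merely packages the same homogeneity as the identity $\xi = z^i\partial_i$ for the Euler vector field together with the commutator $[\xi,\partial_i]=-\partial_i$. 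Where you genuinely diverge is part (1). The paper expands the holomorphic form $\theta|_{S_b}$ in the normalised basis to obtain the \emph{pointwise} identity $\theta = z^i\omega_i$, deduces $w_i = \tau_{ij}z^j$ by integrating over $b$-cycles, and then verifies $\partial_i\bigl(\frac{1}{2}z^jw_j\bigr)=w_i$ directly; you instead obtain $w_i=\tau_{ij}z^j$ from $\mathbb{C}^*$-equivariance of the family of spectral curves plus Euler's relation for the degree-one functions $w_i$. Your route is sound: the point you flag as the main obstacle --- that fibre scaling carries the marked basis to the Gauss--Manin parallel transport of that basis, so that $\tau_{ij}$ is scale-invariant and $z^i,w_i$ scale linearly --- is true and is precisely what the paper establishes in its separate proposition that $\xi = z^i\frac{\partial}{\partial z^i} = w_i\frac{\partial}{\partial w_i}$ (the scaling maps themselves trivialise the family along the $\mathbb{C}^*$-orbit). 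What the paper's route buys is that it is more elementary (no equivariance needed) and yields the stronger statement $\theta=z^i\omega_i$ as an identity of $1$-forms rather than of periods, which is reused elsewhere; what your route buys is a uniform organising principle, with homogeneity driving all three parts. One small gap to record in part (3): verifying the identity on the coordinate fields $\partial_i$ establishes it for the components $c_{ijk}$, but to state it for arbitrary vector fields $X,Y,Z$ one also needs the right-hand side to be tensorial, which follows from the auxiliary vanishing statements $\int_S\nabla_X\theta\wedge\theta=0$ and $\int_S\nabla_X\nabla_Y\theta\wedge\theta=0$ (consequences of $\nabla_X\theta$ being of type $(1,0)$) that the paper proves first and that your argument should include.
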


The above formula for the Donagi-Markman cubic involves differentiation with respect to the Gauss-Manin connection. This requires knowledge of the family of spectral curves. In \textsection \ref{sec:dmcubic} we give another formula for the $c$ in terms of the geometry of a single spectral curve. Our formula holds for any smooth spectral curve, but in this section we will for simplicity consider the case where $\pi : S \to \Sigma$ has only simple branching. Around each ramification point $a \in S$ we can thus find local coordinates $x$ on $\Sigma$ and $q$ on $S$ such that $\pi(q) = x = q^2$. Near $a$ we have $\theta = y dx$ for some function $y(q)$ with $y'(a) \neq 0$. We have:
\begin{theorem}\label{thm:dmcub0}
The Donagi-Markman cubic is given by:
\begin{equation*}
c_{ijk} = -2\pi i \sum_a \underset{a}{Res} \; \left( \frac{ \omega_i \omega_j \omega_k }{dxdy} \right),
\end{equation*}
where the sum is over the ramification points of $\pi$.
\end{theorem}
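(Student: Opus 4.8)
The plan is to start from the identity $c_{ijk} = \partial\tau_{jk}/\partial z^i$ and to localise the resulting period derivative at the ramification points of $\pi$. The first input is the first-order variation of $\theta$: since $z^l = \int_{a_l}\theta$ and the normalised differentials are characterised by $\int_{a_l}\omega_j = \delta_{lj}$, differentiating under the integral sign gives $\int_{a_l}\partial_i\theta = \partial z^l/\partial z^i = \delta_{li}$, and $\partial_i\theta$ is holomorphic (being the derivative of a holomorphic family), so $\partial\theta/\partial z^i = \omega_i$. Writing $\tau_{jk} = \oint_{b_j}\omega_k$ and differentiating, one gets $c_{ijk} = \oint_{b_j}\partial_i\omega_k$, where $\partial_i\omega_k$ denotes the second variation $\partial^2\theta/\partial z^i\partial z^k$. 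The point is that, unlike $\omega_k$, the form $\partial_i\omega_k$ is only meromorphic, with poles supported at the ramification points, and the theorem will follow once its principal parts there are understood.

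Next I would convert the $b$-period $\oint_{b_j}\partial_i\omega_k$ into a sum of local residues. Because the normalisation $\oint_{a_l}\omega_k = \delta_{lk}$ is preserved, $\partial_i\omega_k$ has vanishing $a$-periods, while $\oint_{b_l}\partial_i\omega_k = \partial_i\tau_{lk} = c_{ilk}$. Applying the Riemann bilinear relations to the meromorphic form $\eta = \partial_i\omega_k$ and the holomorphic form $\omega_j$ with local primitive $F_j = \int\omega_j$, and using that the $a$-periods of $\eta$ vanish while those of $\omega_j$ are $\delta_{jl}$, all the period contributions collapse and one is left with $c_{ijk} = 2\pi i\sum_a \operatorname{Res}_a(F_j\,\partial_i\omega_k)$, the sum being over the ramification points (the only poles of $\eta$). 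This reduces the theorem to a purely local computation of $\operatorname{Res}_a(F_j\,\partial_i\omega_k)$.

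The heart of the argument is the local analysis at a simple ramification point $a$, using the normal form $x = x_a(z) + q^2$ and $\theta = Y(x,z)\,dx$, where the critical value $x_a$ now depends on the modulus. Differentiating $Y$ at fixed $x$ (the identification of nearby curves through their common image in $\Sigma$), I would first record that $\omega_i = (\partial_i Y)\,dx$ is holomorphic across $a$ and that, writing $\omega_i = f_i(q)\,dq$, its value is $f_i(0) = -y'(a)\,\partial_i x_a$; this is the dictionary between the motion of the branch point and the value of the normalised differential at $a$. Differentiating once more, the dangerous contributions come from the half-integer powers of $x - x_a(z)$: a direct expansion shows that $\partial_i\omega_k = (\partial_i\partial_k Y)\,dx$ has a double pole with leading term $-\tfrac12 y'(a)(\partial_i x_a)(\partial_k x_a)\,dq/q^2$ and, crucially, \emph{vanishing residue}. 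Feeding $F_j = F_j(a) + f_j(0)\,q + \cdots$ into this principal part and extracting the coefficient of $q^{-1}\,dq$ gives $\operatorname{Res}_a(F_j\,\partial_i\omega_k) = -f_i(0)f_j(0)f_k(0)/(2y'(a))$. Since $dx\,dy = 2q\,y'(q)\,(dq)^2$ near $a$, this equals $-\operatorname{Res}_a(\omega_i\omega_j\omega_k/dx\,dy)$, and combining with the factor $2\pi i$ yields the claimed formula.

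I expect the main obstacle to be the second-variation computation together with the bookkeeping of constants. The fixed-$x$ identification used to differentiate $\theta$ degenerates precisely at the branch points, where the complex structure is actually varying, so care is needed to see that $\partial_i\omega_k$ is genuinely meromorphic with the stated principal part and, in particular, that its residue at $a$ vanishes (this is what makes the final formula intrinsic, and is equivalent in substance to Rauch's variational formula for the period matrix). The remaining work is to justify the period manipulations — differentiating $\tau_{jk}$ under the integral with a locally constant symplectic basis chosen to avoid the ramification points, together with the precise form of the Riemann bilinear relation pairing a meromorphic differential against a holomorphic one — and to track the factors of $2$ and $2\pi i$ so as to land exactly on the stated constant $-2\pi i$.
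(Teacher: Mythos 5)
Your proposal is correct, and it reaches the theorem by a genuinely different route than the paper. The paper's proof is Hodge-theoretic: it first establishes the Yukawa-coupling identity $c(X,Y,Z)=\int_S\nabla_X\nabla_Y\nabla_Z\theta\wedge\theta$ using the Euler vector field of the $\mathbb{C}^*$-action, then applies Griffiths transversality $(\nabla_Y\nabla_Z\theta)^{(0,1)}=\kappa(Y)\cup\nabla_Z\theta$, and localises by exhibiting an explicit Dolbeault representative $\kappa=\overline{\partial}W$ whose polar part $-(\nabla_Y\theta)\zeta$ comes from the canonical lift $\hat V=\frac{\nabla_Y\theta}{\theta}\xi$ (Lemmas \ref{lem:deltatheta}--\ref{lem:nablachi}), so that Stokes' theorem produces the residues. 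You instead work entirely with classical function theory on a single curve: differentiate $\tau_{jk}=\oint_{b_j}\omega_k$ in the Gauss--Manin flat frame, convert the $b$-period of the meromorphic form $\eta=\delta_i\omega_k$ (zero $a$-periods, zero residues) into $2\pi i\sum_a\operatorname{Res}_a(F_j\eta)$ via the Riemann bilinear relations, and compute the principal part of $\eta$ by hand in the normal form $x=x_a(z)+q^2$. I have checked your local computation: with $\omega_i=f_i(q)dq$ one indeed gets $f_i(0)=-y'(a)\,\partial_i x_a$, the second variation $\delta_i\omega_k$ has principal part $-\tfrac12 y'(a)(\partial_i x_a)(\partial_k x_a)\,dq/q^2$ with vanishing residue (the two candidate $q^{-1}$ coefficients, $\pm\tfrac12 y''(0)\partial_ix_a\partial_kx_a$, cancel), whence $\operatorname{Res}_a(F_j\,\delta_i\omega_k)=-f_i(0)f_j(0)f_k(0)/(2y'(a))=-\operatorname{Res}_a(\omega_i\omega_j\omega_k/dx\,dy)$, and the constants assemble to exactly $-2\pi i$. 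In substance you are reproving the Rauch variational formula for the normalised differentials directly, which the paper only invokes later (Proposition \ref{prop:rvf}, for the Bergman kernel, via Fay) in the topological-recursion section; your route thereby bypasses Griffiths transversality and the Kodaira--Spencer bookkeeping, and connects more immediately to the \textsection\ref{sec:toprec} derivation of $c_{ijk}$ from $W^{(0)}_3$. What the paper's approach buys in exchange is coordinate-freeness and generality: Theorem \ref{thm:dmcub} holds for arbitrary ramification, with the simple-branching formula only appearing as a specialisation (Remark \ref{rem:zeta}), whereas your expansion is tied to the normal form $x=x_a(z)+q^2$ — acceptable here, since the statement at hand assumes simple branching. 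The two caveats you flag yourself (meromorphy of $\delta_i\omega_k$ across the ramification points, where the fixed-$x$ identification degenerates, and differentiation under the integral along cycles avoiding the branch points) are real but are settled by your own normal-form expansion and the standard flatness of the Gauss--Manin frame, respectively; in the paper these correspond precisely to the content of Definition \ref{def:delta} and Lemma \ref{lem:deltatheta}.
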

Similar-looking formulas for the Donagi-Markman cubic have appeared in \cite{bal,hhp}, however these formulas use cameral curves instead of spectral curves and involve quadratic residues instead of ordinary residues. One advantage of our formula (in the form of Theorem \ref{thm:dmcub}) is that it applies even when $\pi$ has higher order ramification. Moreover, our formula looks very similar to formulas appearing in the theory of Eynard-Orantin topological recursion. This is not a coincidence, as we show in Section \textsection \ref{sec:toprec}. 

\subsection{Relation to topological recursion}

Eynard-Orantin topological recursion \cite{EO} is a recursive formula which takes as input a Riemann surface $S$ (which we assume is compact) with a pair of meromorphic functions $x,y$ and produces a series of symmetric multidifferentials $W^{(g)}_n$, for $g \ge 0$, $n \ge 1$, the {\em Eynard-Orantin invariants}. More precisely, $W^{(g)}_n$ is a meromorphic section of the $n$-th exterior tensor product $K_S^{\boxtimes n} = K_S \boxtimes K_S \boxtimes \cdots \boxtimes K_S$ on $S^n$, where $K_S$ denotes the canonical bundle of $S$, which is symmetric under interchange of factors. The function $x$, viewed as a map $x : S \to \mathbb{P}^1$ is assumed to be a branched covering with only simple branching. The topological recursion formula has been extended to Hitchin spectral curves $S \subset T^*\Sigma$ in \cite{dumu}, which gives an interpretation of the Eynard-Orantin invariants in terms of quantisation of spectral curves. Our paper gives another interpretation of these invariants, at least in the case of $g=0$. To make sense of the Eyndard-Orantin invariants in this setting, first note that the projection $\pi : S \to \Sigma$ plays the role of $x$. As we will recall in \textsection \ref{sec:toprec}, the topological recursion formula continues to makes sense as long as $\pi$ has only simple branching. They key point is that the recursion formula does not directly involve the functions $x,y$ but only the $1$-form $y dx$. In the case of a spectral curve $S \subset T^*\Sigma$, the canonical $1$-form $\theta$ plays the role of $y dx$. We show that the variational formula for Eynard-Orantin invariants in \cite{EO} holds in our setting and leads to the following formula for derivatives of the period matrix $\tau_{ij}$ about any point $b \in B^{\rm reg}$.
\begin{theorem}\label{thm:varper0}
For any $b \in B^{\rm reg}$, we have:
\[
\partial_{i_1} \partial_{i_2} \cdots \partial_{i_{m-2}} \tau_{ i_{m-1} i_m }(b) = -\left( \frac{i}{2\pi} \right)^{m-1} \int_{p_1 \in b_{i_1}} \cdots \int_{p_m \in b_{i_m} } W^{(0)}_m(p_1 , \dots , p_m)
\]
where on the right hand side, $W^{(0)}_m(p_1 , \dots , p_m)$ is the $W^{(0)}_m$ Eynard-Orantin invariant of the corresponding spectral curve $S_b$. One can show that $W^{(0)}_m$ in each variable only has poles with zero residue, so the above expression does not depend on the choice of paths representing the given cycles.
\end{theorem}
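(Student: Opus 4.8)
The plan is to prove the identity by induction on $m$, treating it as an iterated application of the Eynard--Orantin variational formula to the base case $m=2$. Two facts anchor the argument. First, the low invariants are classical objects: $W^{(0)}_1 = \theta$ and $W^{(0)}_2 = B$, where $B(p,q)$ is the normalised Bergman kernel of $S_b$, characterised by a double pole of biresidue $1$ on the diagonal and vanishing $a$-periods, $\int_{a_i} B(\cdot,q) = 0$. Second, the $b$-periods of $B$ recover the normalised differentials and the period matrix: $\int_{p \in b_i} B(p,q) = 2\pi i\,\omega_i(q)$, whence $\int_{p_1 \in b_{i_1}} \int_{p_2 \in b_{i_2}} B(p_1,p_2) = 2\pi i\,\tau_{i_1 i_2}$. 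Since $-\tfrac{i}{2\pi}\cdot 2\pi i = 1$, this establishes the theorem for $m=2$.

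For the inductive step I need the variational formula in the Hitchin setting, which I would set up as follows. The derivative $\partial_i = \partial/\partial z^i$ deforms the spectral curve, and the induced variation of the tautological form is $\partial_i \theta = \omega_i$ as a holomorphic differential on $S_b$: using the Gauss--Manin connection, $\partial_i\theta$ is a closed holomorphic $1$-form with $a$-periods $\int_{a_j} \partial_i \theta = \partial_i z^j = \delta^j_i$, which by normalisation forces $\partial_i\theta = \omega_i$ (its $b$-periods then automatically reproduce $\int_{b_j}\partial_i\theta = \partial_i w_j = \tau_{ij}$). Combined with $\omega_i(q) = \tfrac{1}{2\pi i}\int_{b_i} B(\cdot,q)$, this exhibits the deformation direction $\partial_i$ as the cycle $\tfrac{1}{2\pi i}\,b_i$ in the sense of form--cycle duality, so that Eynard--Orantin's variational formula reads, for every $n$,
\[
\partial_i W^{(0)}_n(z_1,\dots,z_n) = \frac{1}{2\pi i}\int_{p\in b_i} W^{(0)}_{n+1}(z_1,\dots,z_n,p) = -\frac{i}{2\pi}\int_{b_i} W^{(0)}_{n+1}.
\]

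Granting this, the induction is immediate. Assuming the formula for $m$, I apply $\partial_{i_0}$ to both sides. On the right I differentiate under the integral sign --- legitimate because the cycles $b_{i_1},\dots,b_{i_m}$ may be chosen locally constant over a small neighbourhood in $B^{\rm reg}$ --- so only $W^{(0)}_m$ is differentiated, and the variational formula replaces $\partial_{i_0} W^{(0)}_m$ by $-\tfrac{i}{2\pi}\int_{b_{i_0}} W^{(0)}_{m+1}$. Collecting the extra factor of $-\tfrac{i}{2\pi}$ turns $-(\tfrac{i}{2\pi})^{m-1}$ into $-(\tfrac{i}{2\pi})^m$ and inserts one more cycle integral, which after relabelling is exactly the asserted formula for $m+1$.

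The main obstacle is the variational formula itself. Eynard and Orantin prove it for branched covers $x : S \to \mathbb{P}^1$ with prescribed deformations of the times; here I must check that their argument survives when $x$ is replaced by $\pi : S_b \to \Sigma$ and $y\,dx$ by $\theta$. The essential point is that the recursion kernel depends only on $\theta$ and the local ramification data of $\pi$, so the proof reduces to Rauch's variational formula controlling how $B$ and the ramification points vary with $b$, together with the verification that this variation commutes with the recursion. A secondary point to pin down is the claim guaranteeing path-independence: one must show that, in each variable, $W^{(0)}_m$ has poles only at the ramification points of $\pi$ (and, for $m=2$, along the diagonal) and that these residues vanish, so that the iterated integrals over the cycles $b_{i_j}$ are well defined. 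This can be read off from the recursion, since each application of the kernel produces only principal parts of vanishing residue.
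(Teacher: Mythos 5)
Your proposal takes essentially the same route as the paper: the base case $W^{(0)}_2 = B$ together with $\int_{p_1 \in b_j}\int_{p_2 \in b_k} B(p_1,p_2) = 2\pi i\,\tau_{jk}$, iterated application of the Eynard--Orantin variational formula, and --- correctly identified as the only real obstacle --- the verification of that formula for Hitchin spectral curves, which the paper reduces to precisely the Rauch variational formula you describe (Proposition \ref{prop:rvf}) together with the diagrammatic representation of the invariants; your path-independence remark via vanishing residues is also the paper's.

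There is, however, a sign defect in your write-up. In the conventions you share with the paper ($\nabla_{\partial_i}\theta = \omega_i$ and $\int_{p \in b_i} B(p,q) = 2\pi i\,\omega_i(q)$), the variational formula carries a minus sign:
\[
\delta_i W^{(0)}_n(p_1,\dots,p_n) = -\frac{1}{2\pi i}\int_{p \in b_i} W^{(0)}_{n+1}(p_1,\dots,p_n,p),
\]
inherited from the minus sign in the Rauch formula: for $n = 2$,
\[
\delta_i B(p,r) = -\sum_a \underset{u \to a}{Res}\;\frac{\omega_i(u)\,B(u,p)\,B(u,r)}{dx(u)\,dy(u)} = -\frac{1}{2\pi i}\int_{p' \in b_i} W^{(0)}_3(p',p,r).
\]
Your stated factor $+\frac{1}{2\pi i} = -\frac{i}{2\pi}$ is the opposite; with it, the $m=3$ case would give $\partial_i \tau_{jk} = +2\pi i \sum_a \underset{a}{Res}\bigl( \omega_i\omega_j\omega_k/(dx\,dy) \bigr) = -c_{ijk}$, contradicting Theorem \ref{thm:dmcub}. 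Your induction nevertheless lands on the correct statement only because of a second, compensating slip in the bookkeeping: multiplying the prefactor $-\left(\frac{i}{2\pi}\right)^{m-1}$ by your factor $-\frac{i}{2\pi}$ gives $+\left(\frac{i}{2\pi}\right)^m$, not the $-\left(\frac{i}{2\pi}\right)^m$ you assert. With the correct factor $+\frac{i}{2\pi} = -\frac{1}{2\pi i}$, both slips disappear and the induction closes exactly as you describe. One further small point worth making explicit: the derivative to which the variational formula applies is the lifted derivative $\delta_i = \partial_i - W(\partial_i)$ of Definition \ref{def:delta}, not $\partial_i$ in an arbitrary trivialisation of the family; the interchange $\partial_i \int_{b_{i_1}} \cdots = \int_{b_{i_1}} \cdots\, \delta_i$ is harmless precisely because the cycles can be kept away from the ramification points and the integrands have only residue-free poles, which is the substance of your ``locally constant cycles'' remark and is glossed at the same level of rigour in the paper itself.
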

This formula shows that the $g=0$ Eynard-Orantin invariants $W^{(0)}_k$ for a spectral curve $S_b$ compute the Taylor series expansion of the period matrix $\tau_{ij}$ about $b \in B^{\rm reg}$. Since the special K\"ahler metric on $B^{\rm reg}$ is given in terms of the period matrix, the invariants $W^{(0)}_k$ also compute the power series expansion of the special K\"ahler metric. For instance, we consider the $m=1$ case and show that it recovers our formula (Theorem \ref{thm:dmcub0}) for the Donagi-Markman cubic. Theorem \ref{thm:varper0} is remarkable in that the left hand side of the equation is related to geometry of the {\em family of spectral curves}, while the right hand side is given by invariants of a {\em single spectral curve} $S_b \subset T^*\Sigma$. In a sense, any single spectral curve $S_b$ ``knows" about the geometry of the entire family $\{ S_b \}_{b \in B^{\rm reg}}$.

Let us remark that Theorem \ref{thm:varper0} is certainly not surprising, as it is essentially a consequence of the well known variational formula for Eynard-Orantin invariants given in \cite{EO}. The main point we would like to emphasise is that this formula is applicable to Hitchin spectral curves. This is not immediately obvious as one needs to check that the proof of the variational formula in \cite{EO} holds for Hitchin spectral curves. Ultimately, this boils down to proving a version of the so-called ``Rauch variational formula", which we prove in Proposition \ref{prop:rvf}:
\begin{proposition}
Let $\partial \in T_b B^{\rm reg}$. Assume $p,r$ are distinct and are not ramification points. Then:
\[
\delta B( p, r ) = -\sum_a  \underset{u \to a}{Res} \; \frac{ \delta \theta(u) B(u,p)B(u,r) }{ dx(u) dy(u) },
\]
where the sum is over the ramification points of $\pi$ and for each ramification point $a \in S_b$, we choose coordinate functions $x$ on $\Sigma$ and $q$ on $S$ with $x = q^2$ and write $\theta = y dx$.
\end{proposition}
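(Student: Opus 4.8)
The plan is to follow the strategy by which Eynard--Orantin establish the classical Rauch formula, adapted to the fixed projection $\pi$. The first point is to make the variation $\delta$ precise: since $x=\pi$ is fixed throughout the family $\{S_b\}$, I identify nearby spectral curves by the value of $x$ (and the sheet), so that $\delta$ is differentiation along the family at fixed $x$. With this convention $\delta\theta=(\delta y)\,dx$, and the first fact I would record is that $\delta\theta$ is a global holomorphic $1$-form on $S$: away from the ramification points this is clear, while near a ramification point the $1/q$ pole of $\delta y$ is exactly cancelled by the simple zero $dx=2q\,dq$, so $\delta\theta$ extends holomorphically. Conceptually, $\delta\theta$ is the first-order deformation of $S\subset T^*\Sigma$ viewed as a section of the normal bundle $N_{S/T^*\Sigma}\cong K_S$, the isomorphism coming from the holomorphic symplectic form $d\theta$.

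Next I would pin down $\delta B$ by three structural properties. First, $\delta B$ is symmetric because $B$ is. Second, its $a$-periods vanish, since the symplectic basis is locally constant in $b$ and $B$ is normalised, so $\oint_{a_i}\delta B(\cdot,r)=\delta\big(\oint_{a_i}B(\cdot,r)\big)=0$. Third, the diagonal singularity of $B$ is $\tfrac{dx(p)\,dx(r)}{(x(p)-x(r))^2}+(\text{regular})$ in the fixed coordinate $x$, and this leading part is universal, hence independent of $b$; therefore $\delta B$ has no pole on the diagonal and is holomorphic in each variable away from the ramification points of $\pi$, with possible poles only at those points.

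With these properties in hand, I would reconstruct $\delta B$ from its principal parts. For fixed $r$, $\delta B(\cdot,r)$ is a meromorphic $1$-form with vanishing $a$-periods and poles only at the ramification points, so the reproducing property of the Bergman kernel gives
\[
\delta B(p,r)=-\sum_a \underset{u\to a}{Res}\;\Big(\delta B(u,r)\int^{u} B(\cdot,p)\Big),
\]
the sum being over the ramification points $a$ of $\pi$. The decisive step is then the local computation of the principal part of $\delta B(u,r)$ at each $a$. Working in the coordinate $q$ with $x=q^2$ and $\theta=y\,dx$, I would track how the uniformiser $q$ and the regular part of $B$ vary at fixed $x$ (equivalently, how the branch point moves), express the resulting principal part in terms of $\delta\theta$ and $B(\cdot,r)$, and integrate by parts in $u$ so that $\int^{u}B(\cdot,p)$ is promoted to $B(u,p)$. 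Assembling these contributions yields
\[
\delta B(p,r)=-\sum_a \underset{u\to a}{Res}\;\frac{\delta\theta(u)\,B(u,p)\,B(u,r)}{dx(u)\,dy(u)},
\]
with the sign fixed by the normalisations above. As a consistency check I would verify directly that the right-hand side is symmetric, has vanishing $a$-periods (because $\oint_{a_i}B(u,\cdot)=0$), and has no diagonal pole, so that its difference with $\delta B$ is a holomorphic bidifferential with zero $a$-periods, hence identically zero.

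The main obstacle is this final local computation: relating the principal part of $\delta B$ at a ramification point to the combination $\delta\theta/(dx\,dy)$. This is exactly the analytic core of the Rauch formula and requires care on two fronts---correctly bookkeeping the variation of the local uniformiser $q$ and of the regular part of $B$ at fixed $x$ as the branch point moves, and controlling the interaction between the double pole of $B(u,p)$ and the residue at $u=a$ so that the factor $\int^{u}B(\cdot,p)$ becomes $B(u,p)$. A subtlety worth flagging is that, read as a global $1$-form, $\delta\theta/(dx\,dy)\cdot B(\cdot,p)B(\cdot,r)$ also has poles at the zeros of $dy$ (the ramification points of the function $y$), whereas the formula sums only over ramification points of $\pi$; in the reconstruction approach these never enter, since $\delta B$ has poles only at the zeros of $dx$. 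From the Kodaira--Spencer viewpoint this is transparent: the natural lift $\delta y\,\partial_y$ of $\delta\theta$ has principal part tangent to $S$ only at the zeros of $dx$, so the deformation is genuinely supported there.
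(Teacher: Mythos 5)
Your strategy is genuinely different from the paper's: you propose to characterise $\delta B$ axiomatically (symmetric, vanishing $a$-periods since the cycles are covariantly constant, no diagonal pole because the singular part $\frac{dx(p)\,dx(r)}{(x(p)-x(r))^2}$ is universal at fixed $x$, hence poles only at the ramification points of $\pi$), and then reconstruct it from its principal parts via the reproducing property of the Bergman kernel. The paper instead works with a $C^\infty$ trivialisation of the family so that $\delta = \partial - W$ with $\kappa(\partial) = \overline{\partial}W$, gauges the trivialisation so that $W$ vanishes near $p$ and $r$, imports Fay's variational formula for the Bergman kernel, and converts the resulting principal-value integral into residues at the ramification points by Stokes, using the decomposition $W = -(\delta\theta)\zeta + W'$ with $\zeta = 1/(dx\,dy)$ and $W'$ smooth. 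Your preliminary observations (holomorphy of $\delta\theta$, the Kodaira--Spencer interpretation via $N_S \cong K_S$, the support of the deformation at the zeros of $dx$ rather than of $dy$) are all correct and consistent with the paper's framework.

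However, there is a genuine gap: the step you yourself label ``the decisive step'' and ``the main obstacle'' --- computing the principal part of $\delta B(u,r)$ at each ramification point and identifying it with that of $-\,\delta\theta(u)B(u,r)/(dx(u)\,dy(u))$ --- is never carried out; you only describe what you \emph{would} do. This is the entire analytic content of the proposition: everything preceding it is soft, and everything after it is bookkeeping. Your closing consistency check cannot close this gap, because knowing that both sides are symmetric with zero $a$-periods and no diagonal pole only shows that their difference is a bidifferential with zero $a$-periods; it may still have poles at the ramification points unless the principal parts there agree, which is precisely the omitted computation, so the conclusion ``holomorphic, hence zero'' is circular. (A smaller unverified point in the same spirit: your reconstruction identity $\delta B(p,r) = -\sum_a \operatorname{Res}_{u\to a}\bigl(\delta B(u,r)\int^{u}B(\cdot,p)\bigr)$ requires the poles of $\delta B$ at the ramification points to be residue-free, otherwise the residue depends on the base point of the primitive.) To complete the argument you would either have to perform the local moving-branch-point expansion at each $a$ in the coordinate $q$ with $x=q^2$ --- tracking the variation of the uniformiser and of the regular part of $B$ at fixed $x$ --- or, as the paper does, invoke Fay's variational formula, which packages exactly this local analysis into a citable statement.
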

In this proposition, $B$ is the {\em Bergman kernel} (see \textsection \ref{sec:toprec}). For the meaning of the variational operator $\delta$, see Definition \ref{def:delta}. Versions of this formula are well known in the literature, but we could not find a proof that holds in the setting of Hitchin spectral curves, so we give a proof in \textsection \ref{sec:toprec}.

We finish the paper by using topological recursion to go beyond the Donagi-Markman cubic and compute the symmetric quartic of second derivatives of the periods in terms of geometry of the spectral curve. Around any ramification point $a \in S_b$, we may write the normalised $1$-forms as $\omega_i = \omega_i(q)dq$ for some functions $\omega_i(q)$. Our result is:
\begin{theorem}
The second derivatives of the period matrix are given by:
\begin{equation*}
\begin{aligned}
\partial_i \partial_j \tau_{kl} &= 2\pi i \sum_{a \neq b} B(a,b) \left( \frac{ \omega_i(a)\omega_j(a)}{2y'(a)} \right) \left( \frac{ \omega_k(b)\omega_l(b)}{2y'(b)} \right) + \text{cyc}_{j,k,l} \\
& \quad \quad + 2\pi i \sum_a  \frac{1}{8 y'(a)^2}\left( S_B(a) - \frac{y'''(a)}{y'(a)} \right) \omega_i(a)\omega_j(a)\omega_k(a)\omega_l(a) \\
& \quad \quad + 2 \pi i \sum_a \frac{1}{8y'(a)^2}\left( \omega_i''(a) \omega_j(a)\omega_k(a)\omega_l(a) \right) + \text{cyc}_{i,j,k,l},
\end{aligned}
\end{equation*}
where $S_B$ is the Bergman projective connection (see \textsection \ref{sec:toprec}), the sum $\sum_{a \neq b}$ is over distinct pairs of ramification points and $\text{cyc}$ means to sum over cyclic permutations of the specified indices.
\end{theorem}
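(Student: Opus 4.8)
The plan is to invoke Theorem \ref{thm:varper0} in the case $m = 4$, which expresses the desired second derivatives as a fourfold cycle integral of the fourth $g=0$ Eynard-Orantin invariant:
\[
\partial_i \partial_j \tau_{kl} = -\left( \frac{i}{2\pi} \right)^{3} \int_{p_0 \in b_i} \int_{p_1 \in b_j} \int_{p_2 \in b_k} \int_{p_3 \in b_l} W^{(0)}_4( p_0 , p_1 , p_2 , p_3 ).
\]
Everything then reduces to evaluating the right hand side, and I would compute $W^{(0)}_4$ by applying topological recursion twice. First, the recursion writes $W^{(0)}_4(p_0,p_1,p_2,p_3)$ as a sum over ramification points of a residue of the recursion kernel $K(p_0,q)$ against the stable splittings, namely the terms $B(q, p_r) W^{(0)}_3(\sigma(q), p_s, p_t)$ and $W^{(0)}_3(q, p_r, p_s) B(\sigma(q), p_t)$ together with their permutations in $\{p_1,p_2,p_3\}$. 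Substituting the analogous recursion for the three-point invariant, with $W^{(0)}_3(\,\cdot\,, p_s, p_t) = \sum_b \mathrm{Res}_{q' \to b} K(\,\cdot\,, q')[ B(q',p_s) B(\sigma(q'),p_t) + B(q',p_t) B(\sigma(q'),p_s) ]$, exhibits $W^{(0)}_4$ as a nested double residue over pairs of ramification points.

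The second step is to carry out the four cycle integrals. The one identity that does all the work is that integrating the Bergman kernel over a $b$-cycle returns a normalised holomorphic $1$-form, $\int_{p \in b_m} B(\,\cdot\,, p) = 2\pi i\, \omega_m(\,\cdot\,)$. Applied to the external Bergman legs this replaces each factor $B(q, p_m)$ and $B(\sigma(q), p_m)$ by $2\pi i\, \omega_m(q)$ and $2\pi i\, \omega_m(\sigma(q))$ respectively, and applied inside the recursion kernel $K(p_0, q)$, whose $p_0$-dependence is linear through $\int_{\sigma(q)}^{q} B(p_0,\,\cdot\,)$, it replaces that dependence by $\pi i \int_{\sigma(q)}^{q} \omega_i$. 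After integrating all four cycles we are left with a purely local double residue in the coordinates $q$ (near a ramification point $a$) and $q'$ (near a ramification point $b$), whose ingredients are the functions $\omega_m(q)$, the odd combination $y(q) - y(\sigma(q))$ appearing through the denominator $\theta(q) - \theta(\sigma(q)) = ( y(q) - y(\sigma(q)) )\, dx$, and the one remaining internal Bergman kernel $B(\sigma(q), q')$ carried by the inner recursion kernel.

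I would then split the residue computation according to whether $a$ and $b$ coincide. If $a \neq b$, the internal Bergman kernel $B(\sigma(q), q')$ is holomorphic as $q \to a$ and $q' \to b$, so it simply evaluates to $B(a,b)$; using $\theta(q) - \theta(\sigma(q)) = 4 y'(a) q^2\, dq + O(q^4)$ and the holomorphy of $\omega_m(q)$ at $a$, each residue collapses to its leading coefficient and yields the factor $\frac{\omega_m(a)\omega_n(a)}{2 y'(a)}$ at each of the two points. This produces exactly the off-diagonal term $2\pi i \sum_{a \neq b} B(a,b) \left( \frac{\omega_i(a)\omega_j(a)}{2y'(a)} \right)\left( \frac{\omega_k(b)\omega_l(b)}{2y'(b)} \right)$ together with the stated sum over cyclic permutations. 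If $a = b$, the internal Bergman kernel develops its diagonal double pole, whose regular part is by definition the Bergman projective connection $S_B$ (so that $B(z_1,z_2) = ( (z_1-z_2)^{-2} + \tfrac{1}{6} S_B(z_1) + \cdots )\, dz_1\, dz_2$ in a local coordinate); one must now expand $B$ to projective-connection order, expand the kernel denominator to pick up $\frac{y'''(a)}{y'(a)}$ from the subleading term of $y(q) - y(\sigma(q))$, and expand the $1$-forms to second order to pick up $\omega_m''(a)$. Collecting these against the common factor $\frac{1}{8 y'(a)^2}$ and symmetrising gives the two diagonal terms.

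The hard part will be the diagonal ($a = b$) residue: since $W^{(0)}_4$ is a doubly nested residue, one is extracting a coefficient of a two-variable Laurent expansion, and every ingredient -- the recursion kernel, the internal Bergman kernel, the $1$-forms and the function $y$ -- must be carried to the correct order simultaneously. The bookkeeping is controlled by the parity under the sheet involution $\sigma : q \mapsto -q$: only even powers of the local coordinate survive the symmetrisation, which is exactly why $\omega_m'(a)$ and $y''(a)$ drop out while $\omega_m''(a)$ and $y'''(a)$ remain, and why the double poles organise into the projective connection rather than into odd residues. Keeping track of the combinatorial multiplicities of the various stable splittings, of the accumulated powers of $2\pi i$, and then repackaging the result using the symmetry of $W^{(0)}_4$ into the manifestly symmetric sums $\text{cyc}_{j,k,l}$ and $\text{cyc}_{i,j,k,l}$, is the final and most delicate step.
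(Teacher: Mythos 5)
Your proposal is correct and follows essentially the same route as the paper: the paper likewise starts from Theorem \ref{thm:derivtau} with $m=4$ (your prefactor $-\left(\frac{i}{2\pi}\right)^3$ agrees with its $\left(\frac{1}{2\pi i}\right)^3$), and the nested double residue you obtain by iterating the recursion is exactly the diagrammatic formula for $W^{(0)}_4$ quoted from \cite[Equation (4-46)]{EO}, after which the paper performs the same cycle integrations via $\int_{b_m} B(\cdot,p) = 2\pi i\,\omega_m(\cdot)$, the same split into $a \neq b$ and $a = b$, and the same local expansions producing $S_B(a)$, $y'''(a)/y'(a)$ and $\omega_m''(a)$ against the factor $\frac{1}{8y'(a)^2}$. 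All the delicate points you flag (parity under $\sigma$, collapse of the off-diagonal residues to $\frac{\omega_m(a)\omega_n(a)}{2y'(a)}$, and the conversion of $\text{perm}$ sums into $\text{cyc}$ sums) are handled in the paper exactly as you describe.
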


\subsection{On the $g>0$ invariants}
In this paper, we have established the relation between the $g=0$ Eynard-Orantin invariants of spectral curves and the special K\"ahler geometry of $B^{\rm reg}$. An interesting problem would be to find a similar geometric interpretation for the $g > 0$ invariants. In particular one may ask whether there is a relation between the $g > 0$ invariants and the instanton corrections relating the semi-flat hyperk\"ahler metric on $\mathcal{M}^{\rm reg}$ to the complete hyperk\"ahler metric on $\mathcal{M}$.

\subsection{Summary of paper}
A brief summary of the contents of this paper is as follows. In \textsection \ref{sec:skg} we give a review of special K\"ahler geometry. In \textsection \ref{sec:complexlag} we recall a result of Hitchin showing that the moduli space of deformations of a compact complex Lagrangian in a complex symplectic manifold $M$ has a natural special K\"ahler metric (assuming $M$ admits a K\"ahler metric). In \textsection \ref{sec:higgsb} we briefly review the relevant aspects of the moduli space of Higgs bundles and the Hitchin system $f : \mathcal{M}_{n,d} \to B$, in particular the spectral curve construction. There are two possible ways in which $B^{\rm reg}$ inherits a special K\"ahler geometry: (i) view $B^{\rm reg}$ as a family of complex Lagrangians in $\mathcal{M}_{n,d}$ (the fibres of the map $f$), or (ii) view $B^{\rm reg}$ as a family of spectral curves $S_b \subset T^*\Sigma$ (clearly $S_b$ is a complex Lagrangian submanifold of $T^*\Sigma$). We show in \textsection \ref{sec:skhs} that these two points of view give rise to the same special K\"ahler geometry, and we describe this geometry in terms of the family of spectral curves. In \textsection \ref{sec:dmcubic}, we give a residue formula for the Donagi-Markman cubic, essentially by a computation of Kodaira-Spencer classes. In \textsection \ref{sec:toprec}, we consider the Eynard-Orantin invariants of Hitchin spectral curves and relate the $g=0$ invariants to the special K\"ahler geometry on $B^{\rm reg}$. We show that in this way, we recover our formula from \textsection \ref{sec:dmcubic} for the Donagi-Markman cubic and then proceed to compute the quartic of second derivatives of the periods by topological recursion.

\section{Special K\"ahler geometry}\label{sec:skg}

\subsection{Review of special K\"ahler geometry}

\begin{definition}[\cite{Freed,Hit2}]
A special K\"ahler manifold is a K\"ahler manifold $(M , g , I , \omega )$ together with a torsion free, flat affine connection $\nabla$ such that
\begin{itemize}
\item{$\nabla \omega = 0$, and}
\item{$d_\nabla I = 0$}
\end{itemize}
Here $I \in \Omega^1(M , TM)$ is viewed as a $TM$-valued $1$-form and $d_\nabla : \Omega^1(M , TM) \to \Omega^2(M , TM)$ is the differential induced by $\nabla$.
\end{definition}

Let us examine what the special K\"ahler condition implies in terms of local coordinates, following Freed \cite{Freed}. Since $\nabla$ is flat and torsion free, we can find local coordinates in which $\nabla$ becomes the trivial connection. Moreover, since $\nabla \omega = 0$, we can choose these coordinates to be Darboux, that is, $M$ has local flat coordinates $(x^1 , \dots , x^n , y_1 , \dots , y_n)$ for which
\[
\omega = dx^1 \wedge dy_1 + \dots + dx^n \wedge dy_n.
\]
Next, we observe that the $1$-forms $I dx^i$ are closed, because 
\[
d( I dx^i ) = d_\nabla( I dx^i ) = (d_\nabla I) \wedge dx^i + I d_\nabla( dx^i) = 0,
\]
where in the first equality we used that $\nabla$ is torsion free. It follows that locally there exist functions $u^1, \dots , u^n$ such that $Idx^i = du^i$. Let $z^i = x^i - iu^i$. Then $dz^i = dx^i - idu^i = dx^i - i I dx^i$ is a $(1,0)$-form. This together with the fact that $Re(dz^i) = dx^i$ implies that $(z^1 , \dots , z^n)$ is a local holomorphic coordinate system for $M$. Similarly, one can find functions $v_1, \dots , v_n$ such that $I dy_j = dv_j$ and setting $w_j = y_j - iv_j$ gives another holomorphic coordinate system $(w_1 , \dots , w_n)$\footnote{Note that Freed uses a slightly different convention in which $Re(w_i) = -y_i$.}. A simple computation gives
\[
\frac{\partial}{\partial z^i} = \frac{1}{2} \left( \frac{\partial}{\partial x^i} + \tau_{ij} \frac{\partial}{\partial y_j} \right), \quad \text{where } \;  \tau_{ij} = \frac{\partial w_j}{\partial z^i}.
\]
Compatibility of $\omega$ and $I$ gives, after a short computation, the condition $\tau_{ij} = \tau_{ji}$. So there is a local holomorphic function $\mathcal{F}$, called the {\em prepotential} such that
\[
w_i = \frac{\partial \mathcal{F}}{\partial z^i}, \quad \quad \tau_{ij} = \frac{\partial^2 \mathcal{F}}{\partial z^i \partial z^j}.
\]
From symmetry of $\tau_{ij}$, we also deduce that
\[
\omega = -\frac{i}{2} Im( \tau_{ij} ) dz^i \wedge d\overline{z}^j.
\]
If we use the convention that $g$ and $\omega$ are related by $g(X,Y) = \omega(IX,Y)$, this means that $\tau_{ij}$ is a symmetric, complex $n \times n$ matrix with $Im(\tau_{ij})$ positive definite. That is, $\tau_{ij}$ is a period matrix, a point in the Siegel upper half-space. We note that a K\"ahler potential for $\omega$ is given by:
\begin{equation}\label{equ:kahlerpot}
K = -\frac{1}{2} Im( w_i \overline{z}^i ) = -\frac{1}{2} Im \left( \frac{\partial \mathcal{F}}{\partial z^i} \overline{z}^i \right).
\end{equation}
As in the introduction, we have the Donagi-Markman cubic $c \in H^0( M , Sym^3( T_M ) )$ which measures the variation of the period matrix $\tau_{ij}$:
\[
c = c_{ijk} dz^i \otimes dz^j \otimes dz^k, \quad \quad c_{ijk} = \frac{\partial \tau_{jk}}{\partial z^i} = \frac{\partial^3 \mathcal{F} }{\partial z^i \partial z^j \partial z^k}.
\]

\subsection{Special K\"ahler manifolds as ``bi-Lagrangians"}

In \cite{Hit2}, Hitchin establishes a close relation between special K\"ahler manifolds and submanifolds of a complex symplectic vector space satisfying a ``bi-Lagrangian" condition. We recall the result. Let $V$ be a real symplectic vector space with symplectic form $\omega$. Define $V_{\mathbb{C}} = V \otimes_{\mathbb{R}} \mathbb{C} = V \oplus V$ with complex structure $I = \left( \begin{matrix} 0 & -1 \\ 1 & 0 \end{matrix} \right)$. Define a complex symplectic form $\omega^c = \omega_1 + i\omega_2$ on $V_{\mathbb{C}}$ as the $\mathbb{C}$-bilinear extension of $\omega$, that is:
\[
\omega^c( (x,y) , (x',y') ) = \omega( x+iy , x'+iy') = \underbrace{\left( \omega(x,x') - \omega(y,y') \right)}_{\omega_1( (x,y) , (x',y') )} + i\underbrace{\left( \omega(x,y') - \omega(x',y) \right)}_{\omega_2( (x,y) , (x',y') )}.
\]
Define in addition an (indefinite signature) inner product $g$ on $V_{\mathbb{C}}$ by
\[
g( (x,y) , (x',y') ) = \frac{1}{2}\left( \omega(x,y') + \omega(x',y) \right).
\]
Note that $g( \alpha , \beta ) = Re\left( \frac{i}{2} \omega^c( \alpha , \overline{\beta} ) \right)$, where we define $\overline{ (x,y) } = (x,-y)$.

\begin{theorem}[Hitchin, \cite{Hit2}]\label{thm:bilag}
Let $M \subset V_{\mathbb{C}}$ be a submanifold which is Lagrangian with respect to $\omega_1$ and $\omega_2$ and such that $g|_M$ is positive definite. Then $(M , g|_M , I|_M )$ is special K\"ahler. The projection of $M$ to the first factor $V \subset V_{\mathbb{C}}$ defines a system of local coordinates, and the flat affine connection $\nabla$ is the trivial connection on $TM$ with respect to these coordinates. In a similar manner, the symplectic form $\omega$ on $M$ is obtained by pullback of the symplectic form $\omega$ on $V$. Conversely, any special K\"ahler metric is locally of this form.
\end{theorem}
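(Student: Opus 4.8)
The plan is to extract everything from two linear-algebraic identities that follow at once from the $\mathbb{C}$-bilinearity of $\omega^c$, under which $I$ acts as multiplication by $i$: namely $\omega^c(I\alpha,\beta)=i\,\omega^c(\alpha,\beta)$, whose real and imaginary parts read $\omega_1(I\alpha,\beta)=-\omega_2(\alpha,\beta)$ and $\omega_2(I\alpha,\beta)=\omega_1(\alpha,\beta)$, together with the $I$-invariance $g(I\alpha,I\beta)=g(\alpha,\beta)$ (a one-line check from the formula defining $g$). First I would show that $M$ is a complex submanifold, i.e. that each $T_pM$ is $I$-invariant. Writing $\perp_1$ for the $\omega_1$-orthogonal complement, for $\alpha\in T_pM$ and any $\beta\in T_pM$ we have $\omega_1(I\alpha,\beta)=-\omega_2(\alpha,\beta)=0$ since $M$ is Lagrangian for $\omega_2$; hence $I\alpha\in(T_pM)^{\perp_1}=T_pM$ because $M$ is Lagrangian for $\omega_1$. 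Thus $I|_M$ is an almost complex structure, and it is integrable because a real submanifold of a complex manifold with $I$-invariant tangent spaces is automatically a complex submanifold.

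Next I would show that the projection $\pi$ to the first factor $V$ restricts to a local diffeomorphism on $M$. The differential $d\pi|_{T_pM}$ has kernel $T_pM\cap(\{0\}\oplus V)$, but $g((0,y),(0,y))=0$ for every $y$, so positive-definiteness of $g|_M$ forces this intersection to vanish; since $\dim_{\mathbb{R}}M=\dim_{\mathbb{R}}V=2n$, $d\pi|_{T_pM}$ is an isomorphism. This furnishes the flat affine coordinates $x^i$ (the components of $\pi$) and exhibits $M$ locally as a graph $y=h(x)$. I take $\nabla$ to be the trivial connection in these coordinates, which is manifestly flat and torsion-free. To identify the Kähler form, I compute for $\alpha=(x,y),\,\beta=(x',y')\in T_pM$ that $g(\alpha,I\beta)=\frac{1}{2}\bigl(\omega(x,x')+\omega(y,y')\bigr)$, which equals $\omega(x,x')=\pi^*\omega(\alpha,\beta)$ once one uses $\omega_1|_M=0$, i.e. $\omega(x,x')=\omega(y,y')$ on $M$. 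Hence the Kähler form is $\pi^*\omega$; since $\omega$ has constant coefficients in the linear coordinates $x^i$, we get $\nabla\omega=0$, and closedness is automatic.

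The heart of the argument is the special condition $d_\nabla I=0$. Because $M$ is both $I$-invariant and the graph $y=h(x)$, applying $I$ to a tangent vector $(\xi,dh\,\xi)$ gives $(-dh\,\xi,\xi)$, which must again be tangent; this forces $(dh)^2=-\mathrm{Id}$, and reading off the action of $I$ in the $x^i$-coordinates gives $I(\partial_{x^j})=-\partial_j h^i\,\partial_{x^i}$, so $I^i_j=-\partial_j h^i$. Consequently, in these flat coordinates the $1$-forms $I\,dx^i$ are exactly $-dh^i$, which are exact; since $\nabla$ is trivial here, $d_\nabla I=0$ is equivalent to closedness of the $I\,dx^i$, which is then immediate. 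This verifies all the axioms, so $(M,g|_M,I|_M)$ is special Kähler with $\nabla$ the asserted trivial connection and Kähler form $\pi^*\omega$.

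For the converse I would run Freed's local description (recalled above) in reverse: a special Kähler manifold yields flat Darboux coordinates, hence holomorphic coordinates $z^i=x^i-iu^i$, $w_i=y_i-iv_i$ and a prepotential with $w_i=\partial\mathcal{F}/\partial z^i$ and $\tau_{ij}=\partial w_j/\partial z^i$ symmetric. I would define a local embedding $M\hookrightarrow V_{\mathbb{C}}=\mathbb{C}^{2n}$ by $p\mapsto(z^i(p),w_i(p))$, with $V=\mathbb{R}^{2n}$ carrying $\omega=\sum dx^i\wedge dy_i$ and $\omega^c=\sum dz^i\wedge dw_i$. Then $M$ is Lagrangian for $\omega^c$, hence for $\omega_1$ and $\omega_2$, because on $M$ one has $dw_i=\tau_{ij}\,dz^j$ and so $\sum_i dz^i\wedge dw_i=0$ by symmetry of $\tau$; and $g|_M$ is positive definite because $\mathrm{Im}(\tau)$ is, while the real part of the embedding recovers the flat coordinates. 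The expected obstacle, verifying $d_\nabla I=0$, in fact dissolves once $M$ is known to be a complex graph; the only genuinely delicate points are bookkeeping ones, namely confirming integrability and the graph description throughout the chart and matching sign conventions so that $\pi^*\omega$ is precisely the Kähler form.
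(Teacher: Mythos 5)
The paper does not prove Theorem~\ref{thm:bilag} at all — it quotes it from \cite{Hit2} — so the comparison is with Hitchin's argument and with the material the paper recalls around the statement. Your proof is correct, and both halves align with that material. In the forward direction your three pivots are the right ones: the identity $\omega_1(I\alpha,\beta)=-\omega_2(\alpha,\beta)$ combined with maximal isotropy for $\omega_1$ gives $I$-invariance of $T_pM$ (this is the linear algebra behind \cite[Proposition 1]{Hit2}, which the paper invokes in \textsection\ref{sec:complexlag}); the subspace $\{0\}\oplus V$ is $g$-isotropic, so positive definiteness of $g|_M$ kills the kernel of the equal-dimensional projection to $V$; and on the graph $y=h(x)$ you get $I\,dx^i=-dh^i$, exact, which for the trivial connection is componentwise exactly $d_\nabla I=0$ (your reduction of $d_\nabla I=0$ to $d(I^i_j\,dx^j)=0$ in flat coordinates is the paper's computation $d(I\,dx^i)=d_\nabla(I\,dx^i)=(d_\nabla I)\wedge dx^i+\cdots$ from \textsection\ref{sec:skg} run in reverse, and the derivation of $(dh)^2=-\mathrm{Id}$ and $I^i_j=-\partial_j h^i$ is sound). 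Your converse likewise reverses Freed's local description recalled in \textsection\ref{sec:skg}: your embedding $p\mapsto(z^i(p),w_i(p))$ is precisely the map $s=(Re(z),Re(w),Im(z),Im(w))$ the paper writes down after the theorem, and $\omega^c|_M=\sum_i dz^i\wedge dw_i|_M=0$ by symmetry of $\tau_{ij}$. The one claim you assert without computation — that positive definiteness of $g|_M$ is equivalent to $Im(\tau)>0$ under the convention $g(X,Y)=\omega(IX,Y)$ — is true and is a two-line check (for a tangent vector with holomorphic part $\xi$ one finds $g(\alpha,\alpha)=\overline{\xi}^{\,T}\,Im(\tau)\,\xi$), and the paper states the equivalence explicitly; I would also make explicit, as you only gesture at, that the induced data coincide with the given ones: the embedding is holomorphic so $I$ matches, $(x^i,y_i)$ are the original flat Darboux coordinates so $\nabla$ matches and $\pi^*\omega=\sum dx^i\wedge dy_i$ is the original K\"ahler form, whence $g$ is determined. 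With those two small remarks supplied, your argument is a complete and faithful proof of the theorem.
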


The relation between the local embedding $M \subset V_{\mathbb{C}}$ and the holomorphic coordinates $z^i , w_i$ is as follows: choose a symplectic bases $a^1 , \dots , a^n , b^1 , \dots , b^n$ for $V$, giving an explicit isomorphism $V \cong \mathbb{R}^{2n}$. Then the map $s : M \to V_{\mathbb{C}} = V \oplus V \cong (\mathbb{R}^{n})^4$ is given by
\[
( Re(z) , Re(w) , Im(z) , Im(w) ),
\]
where we think of $z = (z^1 , \dots , z^n)$, $w = (w_1 , \dots , w_n)$ as vectors in $\mathbb{C}^n$. Let $a_1 , \dots , a_n , b_1 , \dots , b_n \in V^*$ be the dual basis and $\langle \; , \; \rangle : V^* \otimes V \to \mathbb{R}$ the dual pairing, which we extend to a pairing $V^*_{\mathbb{C}} \otimes V_{\mathbb{C}} \to \mathbb{C}$ by $\mathbb{C}$-linearity. Then the coordinate systems $z^i , w_i$ can be recovered as:
\[
z^i(m) = \langle a_i , s(m) \rangle, \quad \quad w_i(m) = \langle b_i , s(m) \rangle.
\]

A slight extension of Theorem \ref{thm:bilag} is to consider the following situation: suppose $M$ is an $n$-manifold and let $(\mathcal{V} , \omega , \nabla)$ be a real symplectic vector bundle of rank $2n$ equipped with a flat symplectic connection $\nabla$. Let $\mathcal{V}_{\mathbb{C}} = \mathcal{V} \otimes_{\mathbb{R}} \mathbb{C}$ be the complexification and as above, define a complex symplectic form $\omega^c = \omega_1 + i\omega_2$ and an inner product $g$. Let $s : M \to \mathcal{V}_\mathbb{C}$ be a section of $\mathcal{V}_{\mathbb{C}}$ satisfying the following conditions:
\begin{itemize}
\item[(SK1)]{The bundle map $\rho : TM \to \mathcal{V}_{\mathbb{C}}$ given by $\rho(X) = \nabla_X s$ is injective.}
\item[(SK2)]{The image of $\rho$ is Lagrangian with respect to $\omega_1$ and $\omega_2$.}
\item[(SK3)]{The image of $\rho$ is positive definite with respect to $g$.}
\end{itemize}
Then $M$ inherits a special K\"ahler geometry. Indeed, locally on $M$ we choose a flat trivialisation $\mathcal{V} \cong M \times V$. Then $s$ defines an immersion $s : M \to V_\mathbb{C}$ and we are back to the setting of Theorem \ref{thm:bilag}.

\section{Relation to moduli spaces of complex Lagrangians}\label{sec:complexlag}

We recall the relationship between moduli spaces of complex Lagrangians and special K\"ahler geometry \cite{Hit2}.

\subsection{Deformations of complex Lagrangians}

Let $\mathcal{M}$ be a complex manifold of complex dimension $2n$ and let $\Omega$ be a holomorphic symplectic form, by which we mean a closed $(2,0)$-form such that $\wedge^n \Omega$ is non-vanishing. A {\em complex Lagrangian} in $\mathcal{M}$ is a complex submanifold $Y \subset \mathcal{M}$ which is Lagrangian with respect to $\Omega$, i.e. $Y$ has complex dimension $n$ and $\Omega|_Y = 0$. A real submanifold $Y \subset \mathcal{M}$ of real dimension $2n$ such that $\Omega|_Y = 0$ is in fact automatically a complex Lagrangian \cite[Proposition 1]{Hit2}. If $Y \subset \mathcal{M}$ is a complex Lagrangian, then $\Omega$ yields an isomorphism $N_Y \to T^*_Y$ between the normal bundle of $Y$ and the cotangent bundle, which sends a normal vector field $X$ to $ i_X \Omega |_Y$. In particular, this gives an isomorphism $H^0( Y , N_Y ) \cong H^1( Y , T^*_Y)$ between normal vector fields and holomorphic $1$-forms. Recall that $H^0(Y , N_Y)$ describes the space of infinitesimal deformations of $Y$ as a complex submanifold of $\mathcal{M}$. The infinitesimal deformations as a complex Lagrangian are those for which the corresponding holomorphic $1$-form is closed. 

Following Hitchin, we make the following two assumptions: (i) $Y$ is compact and (ii) $\mathcal{M}$ has a K\"ahler $2$-form $h$. In this case, $Y$ is also K\"ahler and since $Y$ is compact K\"ahler, it follows that all holomorphic $1$-forms are closed. Thus every infinitesimal deformation of $Y$ respects the Lagrangian condition. Furthermore, it follows from \cite{Voi} that in the Lagrangian case, all deformations are unobstructed. Hence there exists a local moduli space $B$ of complex Lagrangian submanifolds parametrising (sufficiently small) deformations of a given complex Lagrangian $Y_0 \subset \mathcal{M}$. A point $[Y] \in B$ is a complex Lagrangian $Y \subset \mathcal{M}$ which is deformation equivalent to $Y_0$. Moreover there is a natural isomorphism $T_{[Y]}B \cong H^0( Y , T^*_Y)$.\\

We recall from \cite{Hit2} how $B$ inherits a naturally defined special K\"ahler structure. Let $Z$ be a local universal family of deformations of the complex Lagrangian submanifold $Y_0 \subset \mathcal{M}$, so $Z$ is a complex manifold with a proper holomorphic surjective submersion $f : Z \to B$ and a holomorphic map $j : Z \to \mathcal{M}$ such that the restriction of $j$ to each fibre $L_b = f^{-1}(b)$ of $f$ gives an embedding $j : L_b \to \mathcal{M}$ whose image is the complex Lagrangian corresponding to the point $b \in B$. Let $\mathcal{V} = R^1 f_* \mathbb{R}$ be the vector bundle on $B$ whose fibre over $b \in B$ is given by the first cohomology $H^1( L_b , \mathbb{R})$ of the fibre $L_b$. The bundle $\mathcal{V}$ is equipped with a natural flat connection $\nabla$, the Gauss-Manin connection. The K\"ahler form $h$ on $\mathcal{M}$ yields a symplectic structure $\omega$ on $\mathcal{V}$ by setting
\[
\omega_b( \alpha , \beta ) = \int_{L_b} \alpha \wedge \beta \wedge h^{n-1},
\]
for all $\alpha , \beta \in \mathcal{V}_b = H^1( L_b , \mathbb{R})$. Clearly $\omega$ is preserved by the Gauss-Manin connection.

The natural isomorphism $T_b B \cong H^0( L_b , T^*_{L_b})$ together with the inclusion $H^0( L_b , T^*_{L_b}) \subset H^1( L_b , \mathbb{C})$ (recall $L_b$ is compact K\"ahler) yields a bundle map $\phi : TB \to \mathcal{V}_{\mathbb{C}}$:
\begin{equation*}
\xymatrix{
T_b B \ar@{}[r]|-*[@]{\cong} \ar@/_2pc/[rrr]_{\phi_b} & H^0(L_b , T^*_{L_b}) \ar@{}[r]|-*[@]{\subset} & H^1(L_b , \mathbb{C}) \ar@{}[r]|-*[@]{\cong} & \left( \mathcal{V}_{\mathbb{C}} \right)_b
}
\end{equation*}
More explicitly, let $X \in T_b B$ be a tangent vector at $b$. Let $\tilde{X}$ be a lift of $X$ to $L_b$, that is, $\tilde{X}$ is a section of $TZ|_{L_b}$ such that $f_*( \tilde{X}(y) ) = X$ for all $y \in L_b$. Then $\phi(X)$ is given by:
\begin{equation}\label{equ:phi}
\phi(X) = i_{\tilde{X}} j^*\Omega |_{L_b},
\end{equation}
which is a closed $1$-form on $L_b$ representing a class in $H^1(L_b , \mathbb{C})$. The map $\phi$ can be viewed as a $\mathcal{V}_{\mathbb{C}}$-valued $1$-form on $B$. We then have:
\begin{theorem}[Hitchin, \cite{Hit2}]\label{thm:clsk}
The $\mathcal{V}_{\mathbb{C}}$-valued $1$-form $\phi$ is $d_\nabla$-closed, so locally we can write $\phi = \nabla s$ for a section $s$ of $\mathcal{V}_{\mathbb{C}}$. Then $s$ satisfies the conditions (SK1)-(SK3) and hence locally defines a special K\"ahler structure on $B$. The special K\"ahler structure is independent of the choice of $s$, hence this construction gives rise to a globally defined special K\"ahler structure on $B$.
\end{theorem}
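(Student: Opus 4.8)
The plan is to produce a local section $s$ of $\mathcal{V}_{\mathbb{C}}$ with $\phi = \nabla s$, and then to verify the three conditions (SK1)--(SK3) for the data $(\mathcal{V}_{\mathbb{C}}, \omega^c, g, \nabla)$, after which the special K\"ahler structure is furnished by Theorem \ref{thm:bilag}. The first and main task is to show $\phi$ is $d_\nabla$-closed. The key input is that $\Omega_Z := j^*\Omega$ is a closed $2$-form on $Z$ (since $\Omega$ is closed and $j$ is holomorphic) and that, by the Lagrangian condition $\Omega_Z|_{L_b} = 0$, the representative $\phi(X) = i_{\tilde X}\Omega_Z|_{L_b}$ is independent of the chosen lift $\tilde X$ of $X$ and is closed (being a holomorphic $1$-form on the compact K\"ahler fibre $L_b$). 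I would compute $d_\nabla\phi$ using the Gauss--Manin formula $\nabla_Y(b\mapsto[\eta|_{L_b}]) = [\mathcal{L}_{\tilde Y}\eta|_{L_b}]$ for a fibrewise-closed form $\eta$ on $Z$. Cartan's formula together with $\mathcal{L}_{\tilde X}\Omega_Z = d\,i_{\tilde X}\Omega_Z$ gives $i_{\tilde Y}\mathcal{L}_{\tilde X}\Omega_Z = \mathcal{L}_{\tilde Y}(i_{\tilde X}\Omega_Z) - d(i_{\tilde Y}i_{\tilde X}\Omega_Z)$, whose last term is fibrewise exact and drops in cohomology; hence $[i_{\tilde Y}\mathcal{L}_{\tilde X}\Omega_Z|_{L_b}] = \nabla_Y\phi(X)$. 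Feeding this identity and its $X,Y$-swap into the expansion $d_\nabla\phi(X,Y) = \nabla_X\phi(Y) - \nabla_Y\phi(X) - \phi([X,Y])$ (with the lift $[\tilde X,\tilde Y]$ of $[X,Y]$) yields $d_\nabla\phi(X,Y) = -d_\nabla\phi(X,Y)$, so $d_\nabla\phi = 0$. When $\Omega = d\theta$ admits a primitive near $Y_0$ this is immediate: $s = [j^*\theta]$ satisfies $\nabla_X s = \phi(X)$ directly by Cartan's formula. Since $\nabla$ is flat, the $d_\nabla$-Poincar\'e lemma then produces, over any contractible $U \subset B$, a section $s$ with $\phi = \nabla s$, so that $\rho = \phi$.

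For (SK1) I would use that, by the defining diagram, $\phi$ is the composition of the isomorphism $T_bB \cong H^0(L_b, T^*_{L_b})$ with the inclusion $H^0(L_b, T^*_{L_b}) = H^{1,0}(L_b) \hookrightarrow H^1(L_b,\mathbb{C}) = (\mathcal{V}_{\mathbb{C}})_b$. Both maps are complex linear, so $\phi$ is complex linear and $W := \phi(T_bB)$ is a complex (that is, $I$-invariant) subspace; the inclusion is injective by Hodge theory on the compact K\"ahler $L_b$, so $\rho = \phi$ is injective. Moreover $W$ has half the dimension of the fibre, $\dim_{\mathbb{C}}W = h^{1,0}(L_b) = \tfrac12 b_1(L_b) = \tfrac12\dim_{\mathbb{C}}(\mathcal{V}_{\mathbb{C}})_b$.

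Conditions (SK2) and (SK3) then reduce to a degree count and Hodge-theoretic positivity. Since $\Omega$ has type $(2,0)$, each $\phi(X)$ is a holomorphic (hence $(1,0)$ and closed) $1$-form on $L_b$. Using the $\mathbb{C}$-bilinear extension $\omega^c(\alpha,\beta) = \int_{L_b}\alpha\wedge\beta\wedge h^{n-1}$, for any $X,Y$ the integrand $\phi(X)\wedge\phi(Y)\wedge h^{n-1}$ has type $(n+1,n-1)$ and so vanishes identically; thus $\omega^c|_{W} = 0$, and since $W$ is $I$-invariant and $\omega^c = \omega_1 + i\omega_2$ with $\omega_1,\omega_2$ real, this is precisely the isotropy of $W$ for both $\omega_1$ and $\omega_2$. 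Being half-dimensional, $W$ is Lagrangian, giving (SK2). For (SK3), with $\alpha = \phi(X)$ one has $g(\phi(X),\phi(X)) = Re\bigl(\tfrac{i}{2}\omega^c(\phi(X),\overline{\phi(X)})\bigr) = \tfrac{i}{2}\int_{L_b}\alpha\wedge\overline{\alpha}\wedge h^{n-1}$, which is real (as $\overline{\alpha}=[\bar\alpha]$ is the $(0,1)$-form) and, by the pointwise positivity of $\tfrac{i}{2}\alpha\wedge\overline{\alpha}\wedge h^{n-1}$ for a nonzero $(1,0)$-form on a K\"ahler manifold, is strictly positive whenever $\alpha \neq 0$, i.e. whenever $X \neq 0$ by (SK1).

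Finally, to globalise I would observe that the special K\"ahler data are built intrinsically from $(\mathcal{V}_{\mathbb{C}}, \omega^c, g, \nabla)$ and from $\phi = \nabla s$, while $\phi$ itself does not depend on $s$. Two local primitives of $\phi$ differ by a $\nabla$-flat section of $\mathcal{V}_{\mathbb{C}}$, which in a flat trivialisation $\mathcal{V} \cong U \times V$ is a constant; this alters the coordinates $Re(s)$ of Theorem \ref{thm:bilag} only by an affine transformation and therefore leaves the flat affine structure, the metric and the complex structure unchanged. A change of flat trivialisation likewise acts by a constant element of $Sp(V)$. Hence the locally defined structures agree on overlaps and patch to a global special K\"ahler structure on $B$. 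I expect the main obstacle to be the closedness $d_\nabla\phi = 0$: once the Gauss--Manin derivative is identified correctly and the exact fibrewise terms are discarded, conditions (SK1)--(SK3) follow from standard Hodge theory and type considerations.
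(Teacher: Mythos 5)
Your proof is correct, and it diverges from the paper's at exactly one point: the closedness of $\phi$. The paper (following Hitchin) disposes of $d_\nabla \phi = 0$ via the Leray--Serre spectral sequence: $j^*\Omega$ is a closed $2$-form on $Z$ vanishing on the fibres, hence determines a class in $E_2^{1,1} = H^1(B, R^1 f_* \mathbb{C}) = H^1(B, \mathcal{V}_{\mathbb{C}})$ represented by $\phi$, and a $\mathcal{V}_{\mathbb{C}}$-valued $1$-form representing such a class is automatically $d_\nabla$-closed. You instead verify closedness by direct Cartan calculus: the Gauss--Manin identity $\nabla_Y \phi(X) = [\mathcal{L}_{\tilde Y}\, i_{\tilde X}\, j^*\Omega |_{L_b}] = [i_{\tilde Y}\, \mathcal{L}_{\tilde X}\, j^*\Omega|_{L_b}]$ (fibrewise-exact terms dropping in cohomology) combined with $i_{[\tilde X,\tilde Y]} = \mathcal{L}_{\tilde X}\, i_{\tilde Y} - i_{\tilde Y}\, \mathcal{L}_{\tilde X}$ gives $\phi([X,Y]) = \nabla_X \phi(Y) - \nabla_Y \phi(X)$, so the three terms of $d_\nabla\phi(X,Y)$ cancel outright (your phrasing ``$d_\nabla\phi = -d_\nabla\phi$'' is just a different bookkeeping of the same cancellation). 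Your computation is elementary and self-contained, and has the merit of making the lift-independence and the role of $\Omega_Z|_{L_b}=0$ explicit; the spectral-sequence argument is slicker and explains conceptually \emph{why} $\phi$ is closed, but leaves the identification of the $E_2^{1,1}$ representative implicit. For (SK1)--(SK3) you follow the same outline as the paper while supplying the facts it only cites: injectivity and half-dimensionality of $\phi(T_bB) = H^{1,0}(L_b)$ from Hodge theory on the compact K\"ahler fibre, the Lagrangian property from the type count $\phi(X)\wedge\phi(Y)\wedge h^{n-1} \in \Omega^{n+1,n-1}(L_b) = 0$ (matching the paper's appeal to the fact that $H^0(L_b, T^*_{L_b}) \subset H^1(L_b,\mathbb{C})$ is Lagrangian), and positivity of $g$ from the pointwise positivity of $\tfrac{i}{2}\alpha\wedge\overline{\alpha}\wedge h^{n-1}$, which is the paper's (SK3) verbatim. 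Your closing observation---that two primitives of $\phi$ differ by a $\nabla$-flat section, hence by a constant in a flat trivialisation, which acts affinely on the coordinates of Theorem \ref{thm:bilag} and so leaves the metric, complex structure and flat connection unchanged---is precisely the justification the paper asserts without proof for patching to a globally defined special K\"ahler structure.
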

\begin{proof}
We give only a sketch of the proof here and refer the reader to \cite{Hit2} for further details. We have that $j^*\Omega$ is a closed $2$-form on $Z$. The complex Lagrangian condition means that $j^*\Omega |_{L_b} = 0$ for any fibre $L_b$. If we consider the Leray-Serre spectral sequence for $f : Z \to B$, we see that $j^*\Omega$ yields a class in $E_2^{1,1} = H^1( B , R^1 f_* \mathbb{C}) = H^1( B , \mathcal{V}_{\mathbb{C}})$, which is represented by the $\mathcal{V}_{\mathbb{C}}$-valued $1$-form $\phi$. This explains why $\phi$ is closed. Condition (SK1) follows from the isomorphism $T_b B \cong H^0( L_b , T^*_{L_b})$ and the inclusion $H^0(L_b , T^*_{L_b}) \subset H^1( L_b , \mathbb{C})$. Condition (SK2) follows from the fact that $H^0(L_b , T^*_{L_b}) \subset H^1( L_b , \mathbb{C})$ is Lagrangian with respect to $\omega$. Condition (SK3) follows from
\[
g_b( \alpha , \alpha) = Re\left( \frac{i}{2} \int_{L_b} \alpha \wedge \overline{\alpha} \wedge h^{n-1} \right),
\]
which is positive definite, since $h$ is a K\"ahler form.
\end{proof}

We give another way of understanding the $\mathcal{V}_{\mathbb{C}}$-valued $1$-form $\phi$. The fibre of the dual local system $\mathcal{V}^*_\mathbb{C}$ over $b \in B$ is given by the homology group $(\mathcal{V}_\mathbb{C}^*)_b = H_1( L_b , \mathbb{C})$. Let $\langle \; , \; \rangle : \mathcal{V}^*_\mathbb{C} \otimes \mathcal{V}_\mathbb{C} \to \mathbb{C}$ be the dual pairing:
\[
\langle \gamma , \alpha \rangle = \int_\gamma \alpha.
\]
Then $\phi$ is determined by
\[
\langle \gamma , \phi \rangle = \int_\gamma j^*\Omega,
\]
for all locally defined covariantly constant sections $\gamma$ of $\mathcal{V}^*_\mathbb{C}$, where the integral on the right hand side is fibrewise integration. More precisely, for any $b \in B$, choose an open neighborhood $U$ of $b$ over which $f : Z \to B$ is trivialisable: $f^{-1}(U) \cong L_b \times U$. Let $\hat{\gamma} \in H^{n-1}( L_b , \mathbb{C})$ be the Poincar\'e dual of $\gamma$. Then $\int_\gamma j^*\Omega$ is given by the integration over the fibres of $L_b \times U \to U$ of $j^*\Omega \wedge \hat{\gamma}$.

\begin{proposition}\label{prop:dmu}
Suppose that $\Omega = d\mu$, where $\mu$ is a holomorphic $1$-form on $\mathcal{M}$. Then $\phi = d_\nabla s$, where $s : B \to \mathcal{V}_\mathbb{C}$ is the section defined by
\[
s(b) = [ \mu|_{L_b} ] \in H^1( L_b , \mathbb{C}).
\]
\end{proposition}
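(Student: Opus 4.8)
The plan is to compute the Gauss-Manin derivative $d_\nabla s$ directly and match it against the formula (\ref{equ:phi}) that defines $\phi$. First I would check that $s$ is well-defined. Since $L_b$ is a complex Lagrangian we have $\Omega|_{L_b} = 0$, so $d(\mu|_{L_b}) = (d\mu)|_{L_b} = \Omega|_{L_b} = 0$; thus $\mu|_{L_b}$ is a closed $1$-form and $s(b) = [\mu|_{L_b}]$ is a genuine class in $H^1(L_b , \mathbb{C})$. Identifying $\mu|_{L_b}$ with $(j^*\mu)|_{L_b}$, I regard $s$ as the section obtained by restricting the single total-space $1$-form $j^*\mu$ to the fibres. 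Note that $j^*\mu$ itself is \emph{not} closed on $Z$ in general; rather $d(j^*\mu) = j^*\Omega$, which is exactly the failure that will produce $\phi$.

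The key ingredient is a Lie-derivative formula for the Gauss-Manin connection: if $\eta$ is any form on $Z$ whose restriction to every fibre is closed, then $b \mapsto [\eta|_{L_b}]$ is a section of the associated local system satisfying $\nabla_X [\eta|_{L_b}] = [(i_{\tilde{X}}\, d\eta)|_{L_b}]$ for any lift $\tilde{X}$ of $X$ to $Z$. To establish this I would work in a local product trivialisation $f^{-1}(U) \cong L_{b_0} \times U$ and take the lift $\tilde{X} = \partial_t$ adapted to the product, so that the Gauss-Manin derivative is the naive $t$-derivative of the restricted form. Decomposing $\eta = \eta^{(0)} + dt \wedge \eta^{(-1)}$ into its parts without and with $dt$, and applying Cartan's formula $\mathcal{L}_{\tilde{X}}\eta = d\, i_{\tilde{X}}\eta + i_{\tilde{X}}\, d\eta$, one finds that $(i_{\tilde{X}}\, d\eta)|_{L_b}$ equals $\partial_t(\eta|_{L_b})$ up to the fibrewise-exact term $d_{L_b}(\eta^{(-1)}|_{L_b})$, which vanishes in cohomology. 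Independence of the lift then follows from fibrewise-closedness: changing $\tilde{X}$ by a vertical field $V$ alters the right-hand side by $(i_V\, d\eta)|_{L_b} = i_V\, d_{L_b}(\eta|_{L_b}) = 0$.

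With this formula in hand the proposition is immediate. Applying it to $\eta = j^*\mu$, whose fibrewise restriction is closed and whose total-space exterior derivative is $d\eta = j^*\Omega$, I obtain
\[
\nabla_X s = [(i_{\tilde{X}}\, j^*\Omega)|_{L_b}] = \phi(X),
\]
which is precisely the defining expression (\ref{equ:phi}) for $\phi$. Hence $d_\nabla s = \phi$, as claimed; in particular, by Theorem \ref{thm:clsk}, this explicit $s$ satisfies (SK1)--(SK3).

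The main obstacle is the careful justification of the Gauss-Manin/Lie-derivative formula in the middle step: one must keep the exterior derivative $d$ on the total space $Z$ cleanly separated from the fibrewise derivative $d_{L_b}$, confirm that the spurious exact term genuinely dies in cohomology, and verify independence of the lift $\tilde{X}$ --- and it is exactly here that the Lagrangian (fibrewise-closed) hypothesis enters. The remaining steps are formal once this formula is in place.
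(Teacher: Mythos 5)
Your proof is correct, but it takes a different route from the paper's. The paper never works at the level of differential-form representatives: it uses the duality characterisation of $\phi$ stated just before the proposition, namely $\langle \gamma , \phi \rangle = \int_\gamma j^*\Omega$ for covariantly constant cycles $\gamma$, and then computes in three lines $\langle \gamma , d_\nabla s \rangle = d\langle \gamma , s\rangle = d\bigl( \int_\gamma \mu \bigr) = \int_\gamma d\mu = \int_\gamma \Omega$, invoking only that exterior differentiation commutes with fibre integration. You instead prove a general Cartan-calculus lemma for the Gauss-Manin connection --- $\nabla_X [\eta|_{L_b}] = [(i_{\tilde{X}}\, d\eta)|_{L_b}]$ for any form $\eta$ on $Z$ with closed fibrewise restrictions --- via a local product trivialisation, the decomposition $\eta = \eta^{(0)} + dt \wedge \eta^{(-1)}$, and $\mathcal{L}_{\tilde{X}} = d\, i_{\tilde{X}} + i_{\tilde{X}}\, d$, and then apply it to $\eta = j^*\mu$. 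Your computation is sound: the exact term $d_{L_b}(\eta^{(-1)}|_{L_b})$ does die in cohomology, and your independence-of-lift argument correctly uses that contracting $d\eta$ with a vertical vector and restricting to the fibre only sees the purely vertical component $d_{L_b}(\eta|_{L_b})$, which vanishes by fibrewise closedness --- exactly where the Lagrangian hypothesis enters. What each approach buys: the paper's argument is shorter and cleaner, but it leans on the asserted fibre-integration description of $\phi$, whose own justification is essentially the representative-level computation you carry out; your proof matches the pointwise definition (\ref{equ:phi}) of $\phi$ directly, yields the identity at the level of $1$-form representatives rather than merely after pairing with all of $H_1(L_b,\mathbb{C})$, and your intermediate lemma is of independent use (it is the same mechanism behind Lemma \ref{lem:nablachi} later in the paper).
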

\begin{proof}
First of all, note that the restriction $\mu|_{L_b}$ is a closed $1$-form because $d\mu = \Omega$ and $\Omega|_{L_b} = 0$, so the section $s$ is well-defined. Let $\gamma$ be a local constant section of $\mathcal{V}_\mathbb{C}$. Then
\begin{equation*}
\begin{aligned}
\langle \gamma , d_\nabla s \rangle &= d \langle \gamma , s \rangle - \langle d_\nabla \gamma , s \rangle \\
&= d \langle \gamma , s \rangle \quad \text{(as } \gamma \text{ is constant)} \\
&= d \left( \int_\gamma \mu \right) \\
&= \int_\gamma d\mu = \int_\gamma \Omega = \langle \gamma , \Omega \rangle,
\end{aligned}
\end{equation*}
where we have used the fact that exterior differentiation commutes with fibre integration. This shows $\phi = d_\nabla s$.
\end{proof}

Suppose we are in the situation of Proposition \ref{prop:dmu}. For any $b \in B$, choose a symplectic basis $a_1, \dots , a_n , b_1 , \dots , b_n$ of $H_1( L_b , \mathbb{C})$. If $U$ is any simply connected neighborhood of $b$, we can extend $a_i , b_i$ to be covariantly constant sections of $\mathcal{V}^*$ over $U$. Then the local holomorphic coordinates $z^i$ and $w_i$ of the special K\"ahler structure are given by
\[
z^i = \int_{a_i} \mu, \quad \quad w_i = \int_{b_i} \mu.
\]

\begin{example}[Holomorphic symplectic surfaces]
Let $\mathcal{M}$ be a holomorphic symplectic surface, i.e. a complex surface with trivial canonical bundle. Let $\Omega$ be the symplectic form on $\mathcal{M}$. Then any complex $1$-dimensional submanifold $S \subset \mathcal{M}$ is automatically Lagrangian, because there are no $(2,0)$-forms on $S$. The moduli space $B$ of deformations of $S$ then carries a natural special K\"ahler structure. We note that in this case since $n=1$, it is not necessary to assume the existence of a K\"ahler form $h$ on $\mathcal{M}$. Indeed, the symplectic structure on $\mathcal{V}_{\mathbb{C}}$ is given by
\[
\omega_b( \alpha , \beta ) = \int_{L_b} \alpha \wedge \beta,
\]
which is defined without assuming the existence of $h$. Moreover, since $S$ is a Riemann surface it is automatically K\"ahler. As we will see, this example is closely related to the Hitchin system, where $\mathcal{M} = T^*\Sigma$ is the cotangent bundle of a Riemann surface $\Sigma$ and $S \subset T^*\Sigma$ is a spectral curve.
\end{example}

\begin{example}[Complex integrable systems]\label{ex:intsys}

Suppose that $\mathcal{M}$ admits a proper holomorphic Lagrangian fibration $f : \mathcal{M} \to B$, by which we mean $B$ is a complex $n$-manifold and $f : \mathcal{M} \to B$ is a proper, surjective, holomorphic map whose fibres are Lagrangian with respect to $\Omega$. Assume further that the fibres of $f$ are connected. Then Liouville's theorem implies that the fibres of $f$ are in fact complex tori. We shall refer to the data $(\mathcal{M} , \Omega , B , f)$ as a complex integrable system. Since the normal bundle to the fibres is given by $f^*(TB)$, it follows that the deformations of any given fibre $L_b \subset \mathcal{M}$ are precisely the other fibres of $f$. Hence $B$ is the moduli space of deformations of any given fibre. If we make the additional assumption that $\mathcal{M}$ admits a K\"ahler $2$-form $h$, then $B$ inherits a natural special K\"ahler structure (which depends on the choice of $h$). 
\end{example}

\section{Higgs bundles and the Hitchin system}\label{sec:higgsb}

\subsection{Review of Higgs bundles}

Let $\Sigma$ be a compact Riemann surface of genus $g > 1$ and let $K$ denote the canonical bundle of $\Sigma$.

\begin{definition}
A Higgs bundle on $\Sigma$ of rank $n$ and degree $d$ is a pair $(E , \Phi)$, where $E$ is a holomorphic vector bundle on $\Sigma$ of rank $n$, degree $d$ and $\Phi$ is a holomorphic bundle map $\Phi : E \to E \otimes K$, called the Higgs field.
\end{definition}

Recall that the slope $\mu(E)$ of a holomorphic vector bundle $E$ on $\Sigma$ is defined to be the number $\mu(E) = {\rm deg}(E)/{\rm rank}(E)$.

\begin{definition}
A Higgs bundle $(E , \Phi)$ is called semistable if for all proper, non-zero subbundles $F \subset E$ such that $\Phi(F) \subseteq F \otimes K$, we have $\mu(F) \le \mu(E)$.
\end{definition}

Using geometric invariant theory \cite{nit}, one constructs a moduli space $\mathcal{M}_{n,d}$ of rank $n$, degree $d$ semistable Higgs bundles up to a suitable notion of equivalence (called $S$-equivalence). Here we will recall only the details of $\mathcal{M}_{n,d}$ which are relevant to understanding the special K\"ahler geometry of the Hitchin system. The moduli space $\mathcal{M}_{n,d}$ is a quasi-projective, complex algebraic variety of dimension $2n^2(g-1)+2$. The moduli space is in general singular, but the smooth locus $\mathcal{M}_{n,d}^{{\rm sm}}$ has a naturally defined holomorphic symplectic $2$-form $\Omega$. The smooth locus $\mathcal{M}_{n,d}^{{\rm sm}}$ is a hyperk\"ahler manifold with a triple of complex structures $I,J,K$ and corresponding K\"ahler $2$-forms $\omega_I , \omega_J , \omega_K$. The complex structure which arises from viewing $\mathcal{M}_{n,d}$ as the moduli space of Higgs bundles is customarily taken to be $I$ and it is the only complex structure of relevance to us here. So we will regard $\mathcal{M}_{n,d}^{\rm sm}$ as a K\"ahler manifold $(\mathcal{M}^{\rm sm}_{n,d} , I , \omega_I)$ equipped with a holomorphic symplectic form $\Omega = \omega_J + i\omega_K$. One can also consider moduli spaces of Higgs bundles $(E,\Phi)$, where $\Phi$ is trace-free and $E$ has fixed determinant. This gives a subvariety of $\mathcal{M}_{n,d}$. As the corresponding special K\"ahler geometry is obtained by restriction, it is sufficient for our purposes to consider just the case of $\mathcal{M}_{n,d}$.

Associated to $\mathcal{M}_{n,d}$ is a complex integrable system, known as the {\em Hitchin system} \cite{hit3}, which is defined as follows. If $A$ is a complex $n \times n$ matrix, write the characteristic polynomial of $A$ as
\[
{\rm det}( \lambda - A ) = \lambda^n + p_1(A) \lambda^{n-1} + \dots + p_n(A).
\]
The coefficients $p_1 , \dots , p_n$ of the characteristic polynomial can be viewed as maps $\mathfrak{gl}(n,\mathbb{C}) \to \mathbb{C}$ which are are well known to be a basis for the ring of conjugation-invariant polynomial functions on $\mathfrak{gl}(n,\mathbb{C})$. The Hitchin system is the analogue of this where the matrix $A$ is replaced by a Higgs field. If $(E,\Phi)$ is a rank $n$ Higgs bundle, then by conjugation invariance, $p_j(\Phi)$ is a well-defined section of $K^j$. Define
\[
B = \bigoplus_{j=1}^n H^0( \Sigma , K^j ).
\]
Then we have a natural map $f : \mathcal{M}_{n,d} \to B$, called the {\em Hitchin map}, which sends a Higgs bundle $(E,\Phi)$ to $(p_1(\Phi) , p_2(\Phi) , \dots , p_n(\Phi))$. It is known \cite{hit1,sim} that $f$ is a proper,  surjective, holomorphic map whose non-singular fibres are Lagrangian submanifolds with respect to $\Omega$. Let $B^{\rm reg} \subset B$ denote the {\em regular locus}, i.e. the locus of points $b \in B$ over which $f$ is a submersion. Let $\mathcal{M}_{n,d}^{\rm reg} \subset \mathcal{M}_{n,d}$ denote the locus of points in $\mathcal{M}_{n,d}$ lying over $B^{\rm reg}$, so that $f : \mathcal{M}_{n,d}^{\rm reg} \to B^{\rm reg}$ is a proper, holomorphic surjective submersion of complex manifolds (since $B$ is smooth and $f : \mathcal{M}_{n,d}^{\rm reg} \to B^{\rm reg}$ is a submersion, it follows that $\mathcal{M}_{n,d}^{\rm reg}$ is contained in the smooth locus $\mathcal{M}_{n,d}^{\rm sm}$). As seen in Theorem \ref{thm:specdat} below, the fibres of $f$ over $B^{\rm reg}$ are connected and hence, as in Example \ref{ex:intsys}, the fibres are complex tori. We then have that $B^{\rm reg}$ can be identified with the moduli space of deformations of any given fibre in $\mathcal{M}_{n,d}^{\rm reg}$ and thus $B^{\rm reg}$ carries a natural special K\"ahler geometry. The complement $\mathcal{D} = B \setminus B^{\rm reg}$, called the {\em discriminant locus}, is the locus of all singular fibres of $f : \mathcal{M}_{n,d} \to B$. It is known that $\mathcal{D}$ is an irreducible hypersurface of $B$ \cite{kp}. 

\subsection{Spectral curves}
The fibres of the Hitchin system can be described using the notion of {\em spectral curves} \cite{bnr,hit3}. Let $\pi : T^*\Sigma \to \Sigma$ be the projection from $T^*\Sigma$ to $\Sigma$. Observe that $T^*\Sigma$ is the total space of the canonical bundle $K \to \Sigma$. Therefore the pullback $\pi^*(K)$ has a natural section $\lambda$, the {\em tautological section}, defined by the property that if $p \in K_x$, then
\[
\lambda(p) = p \in \left( \pi^*(K) \right)_p = K_x.
\]
Let $b = (p_1 , p_2 , \dots , p_n ) \in B$, so $p_j \in H^0( \Sigma , K^j)$. To the point $b$, we associate a section $p = p_b \in H^0( T^*\Sigma , \pi^*(K^n) )$, given by
\[
p_b = \lambda^n + \pi^*(p_1) \lambda^{n-1} + \dots + \pi^*(p_n).
\]
The spectral curve $S_b$ associated to $b \in B$ is defined as the zero divisor of $p_b$ in $T^*\Sigma$. Thus $S_b$ is defined as a subscheme of $T^*\Sigma$. Bertini's theorem implies that for generic points $b \in B$, the spectral curve $S_b \subset T^*\Sigma$ is smooth. In fact, it can be shown that $S_b$ is smooth if and only if $b \in B^{\rm reg}$ \cite{kp}. Thus, the discriminant locus $\mathcal{D}$ is also the locus of singular spectral curves. 

If $S_b$ is a spectral curve, we denote by $\pi : S_b \to \Sigma$ the restriction of $\pi$ to $S_b$. Let $\tilde{d} = d + (g-1)n(n-1)$. For $b \in B^{\rm reg}$, let $Jac_{\tilde{d}}(S_b)$ denote the degree $\tilde{d}$ component of the Picard variety of $S_b$, which is a torsor for the Jacobian of $S_b$. If $L \in Jac_{\tilde{d}}(S_b)$, then by Grothendieck-Riemann-Roch, one sees that the push-forward $E = \pi_*L$ is a rank $n$, degree $d$ vector bundle on $\Sigma$. The tautological section $\lambda$, viewed as a map $\lambda : L \to L \otimes \pi^*K$ pushes down to a map $\Phi : E \to E \otimes K$ and hence the pair $(E , \Phi)$ is a Higgs bundle. Since $p_b = 0$ on $S_b$, it follows that:
\[
\Phi^n + \pi^*(p_1) \Phi^{n-1} + \dots + \pi^*(p_n) = 0.
\]
Since $b \in B^{\rm reg}$, $S_b$ is smooth and it follows that $p_b$ is irreducible. Thus $p_b$ must be the characteristic polynomial of $\Phi$, in other words $(E,\Phi)$ belongs to the fibre of $\mathcal{M}_{n,d}$ over $b \in B$. In this way, we obtain all Higgs bundles in the fibre over $b$:

\begin{theorem}[\cite{hit3,bnr}]\label{thm:specdat}
Let $b \in B^{\rm reg}$. The map sending $L \in Jac_{\tilde{d}}(S_b)$ to $(E , \Phi)$ described above gives an isomorphism between $Jac_{\tilde{d}}(S_b)$ and the fibre of $\mathcal{M}_{n,d}$ over $b \in B$.
\end{theorem}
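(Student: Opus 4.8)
The forward map $L \mapsto (E,\Phi) = (\pi_* L, \pi_*\lambda)$ has already been described, so the task is to produce an inverse and to check that the two are mutually inverse. The plan is to exploit the fact that $\pi : S_b \to \Sigma$ is a finite, hence affine, morphism, so that $\pi_*$ induces an equivalence between coherent sheaves on $S_b$ and coherent $\pi_*\mathcal{O}_{S_b}$-modules on $\Sigma$. The first step is to identify the sheaf of algebras $\mathcal{A} := \pi_* \mathcal{O}_{S_b}$ explicitly. Since $T^*\Sigma$ is the total space of $K$, we have $\pi_* \mathcal{O}_{T^*\Sigma} = \mathrm{Sym}^\bullet(K^{-1}) = \bigoplus_{i \ge 0} K^{-i}$, with the tautological section $\lambda$ corresponding to the degree-one generator; cutting out $S_b$ by $p_b$ gives $\mathcal{A} = \mathrm{Sym}^\bullet(K^{-1})/(p_b) \cong \mathcal{O}_\Sigma \oplus K^{-1} \oplus \cdots \oplus K^{-(n-1)}$ as an $\mathcal{O}_\Sigma$-module, with multiplication by $\lambda$ governed by the relation $\lambda^n = -\pi^*(p_1)\lambda^{n-1} - \cdots - \pi^*(p_n)$.

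Second, given a Higgs bundle $(E,\Phi)$ in the fibre over $b$, I would promote $E$ to an $\mathcal{A}$-module. The datum $\Phi : E \to E \otimes K$ is the same as an action of the degree-one generator, i.e. a map $K^{-1} \otimes E \to E$, and this extends to an action of $\mathrm{Sym}^\bullet(K^{-1})$. Because $(E,\Phi)$ lies over $b$, the characteristic polynomial of $\Phi$ is exactly $p_b$, so Cayley--Hamilton gives $p_b(\Phi) = 0$; this is precisely the relation needed for the action to descend to $\mathcal{A} = \mathrm{Sym}^\bullet(K^{-1})/(p_b)$. Applying the inverse of the equivalence $\pi_*$ then produces a coherent sheaf $L$ on $S_b$ with $\pi_* L \cong E$ intertwining multiplication by $\lambda$ with $\Phi$. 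This is the candidate inverse image.

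Third, I would check that $L$ is a line bundle of degree $\tilde d$. Since $\pi$ has degree $n$ and $\pi_* L = E$ has rank $n$, the sheaf $L$ has generic rank $1$; and $L$ is torsion-free, for any torsion subsheaf would push forward to a torsion subsheaf of the locally free $E$. Here is the one point where $b \in B^{\rm reg}$ is essential: $S_b$ is then smooth, hence an integral curve, so a torsion-free rank-one sheaf is automatically invertible. The degree $\deg L = \tilde d$ follows from the Grothendieck--Riemann--Roch computation already used in the forward direction (matching $\deg E = d$). Finally, the two assignments are mutually inverse because both are instances of the single equivalence of categories furnished by $\pi_*$: a line bundle corresponds to the $\mathcal{A}$-module $(\pi_* L, \pi_*\lambda)$, and conversely.

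The above gives a bijection of points; upgrading it to an isomorphism of algebraic varieties is the step I expect to be the main obstacle. For this I would relativise the construction, building a universal spectral sheaf over $S_b \times Jac_{\tilde d}(S_b)$ whose pushforward is a family of Higgs bundles over $Jac_{\tilde d}(S_b)$, thereby inducing an algebraic morphism into the fibre of $\mathcal{M}_{n,d}$; running the module-theoretic construction in families produces the inverse morphism. Compatibility of the two universal families then shows these morphisms are mutually inverse, yielding the claimed isomorphism of varieties, as in Beauville--Narasimhan--Ramanan. The delicate bookkeeping of twists by $K$, so that degrees and the normalisation $\tilde d = d + (g-1)n(n-1)$ come out correctly, is where most of the care is required.
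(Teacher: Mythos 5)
The paper offers no proof of this theorem beyond the forward construction and the citation to \cite{bnr,hit3}, and your proposal is precisely the argument of \cite{bnr}: the equivalence between coherent sheaves on $S_b$ and $\pi_*\mathcal{O}_{S_b}$-modules induced by the finite (affine) morphism $\pi$, Cayley--Hamilton to equip $E$ with the $\mathcal{A}$-module structure, integrality of the smooth spectral curve to promote a torsion-free rank-one sheaf to a line bundle, and relativisation via a universal spectral sheaf for the scheme-theoretic statement --- so it is correct and takes essentially the same route as the cited source. The one step you (like the paper's own sketch) leave implicit is semistability: since $p_b$ is irreducible, a Higgs bundle with characteristic polynomial $p_b$ admits no proper nonzero $\Phi$-invariant subsheaf and is therefore automatically stable, which is what guarantees both that $(\pi_*L,\pi_*\lambda)$ defines a point of $\mathcal{M}_{n,d}$ and that S-equivalence classes in the fibre coincide with the isomorphism classes your equivalence of categories actually parametrises.
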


\subsection{On deformations of spectral curves}

Let $\theta \in \Omega^{1,0}(T^*\Sigma)$ denote the canonical $1$-form on $T^*\Sigma$. Then $d\theta$ is the canonical holomorphic symplectic $2$-form on $T^*\Sigma$ and gives a trivialisation of the canonical bundle of $T^*\Sigma$. Now let $S \subset T^*\Sigma$ be a non-singular spectral curve. We denote by $K_S$ the canonical bundle of $S$ and $N_S$ the normal bundle. By definition of $N_S$, we have a short exact sequence
\begin{equation*}
0 \to K_S^{-1} \to T_{(T^*\Sigma)}|_S \to N_S \to 0.
\end{equation*}
Taking determinants and using $d\theta$, we get an isomorphism $N_S \cong K_S$. Explicitly, the isomorphism is given by
\begin{equation*}
\varphi : N_S \to K_S, \quad \varphi(V) = i_V d\theta |_S
\end{equation*}
which sends a normal vector field $V$ to the contraction of $d\theta$ with $V$. Next, since $S$ is by definition a divisor of the linear system of sections of $\pi^*(K^n)$, we have by the adjunction formula that $N_S \cong \pi^*(K^n)|_S$. We will make this isomorphism explicit as follows. Suppose that $S$ is the zero divisor of $p \in H^0( T^*\Sigma , \pi^*(K^n))$. Choose an open covering $\{ U_i \}$ of $T^*\Sigma$ over which $\pi^*(K^n)$ is trivial. Let $g_{ij} : U_i \cap U_j \to \mathbb{C}^*$ be the transition functions, so $p$ corresponds to a collection of functions $s_i : U_i \to \mathbb{C}$ such that $s_i = g_{ij}s_j$ on $U_i \cap U_j$ and $s_i |_S = 0$ for all $i$. Now let $V$ be a normal vector field along $S$. Denote by $\partial_V s_i$ the derivative of $s_i$ in the direction $V$. Then $\partial_V(s_i) |_S = g_{ij} \partial_V( s_j) |_S$, because $s_j|_S = 0$. Therefore, $\{ \partial_V(s_i)|_S \}_i$ is a well-defined section of $\pi^*(K^n)|_S$, which we denote simply as $\partial_V p |_S$. The desired isomorphism is
\[
\varphi' : N_S \to \pi^*(K^n), \quad \varphi'(V) = \partial_V p|_S.
\]
\begin{lemma}
Let $b \in B^{\rm reg}$. The map
\[
\rho : B = \bigoplus_{j=1}^n H^0( \Sigma , K^j) \to H^0( S_b , \pi^*(K^n)) 
\]
given by
\[
\rho( b_1 , b_2 , \dots , b_n) = \pi^*(b_1)\lambda^{n-1} + \pi^*(b_2)\lambda^{n-2} + \dots + \pi^*(b_n)
\]
is an isomorphism.
\end{lemma}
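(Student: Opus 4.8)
The plan is to show that $\rho$ is an isomorphism by verifying that its source and target are finite-dimensional of the same dimension and that $\rho$ is injective; since $\rho$ is manifestly linear, a linear injection between spaces of equal finite dimension is automatically an isomorphism, so these two facts suffice.

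First I would pin down the dimension of the target $H^0(S_b, \pi^*(K^n))$. The essential input is already at hand: composing the two isomorphisms $\varphi : N_{S_b} \to K_{S_b}$ and $\varphi' : N_{S_b} \to \pi^*(K^n)|_{S_b}$ constructed immediately before the lemma yields a canonical isomorphism $K_{S_b} \cong \pi^*(K^n)|_{S_b}$. Hence $H^0(S_b, \pi^*(K^n)) \cong H^0(S_b, K_{S_b})$, the space of holomorphic $1$-forms on $S_b$, whose dimension is the genus $g_S$ (here I use that $S_b$ is smooth and irreducible, hence connected, for $b \in B^{\rm reg}$). To evaluate $g_S$ I would use adjunction to compute $\deg K_{S_b} = \deg \pi^*(K^n) = n \cdot \deg_\Sigma(K^n) = 2n^2(g-1)$, so that $g_S = n^2(g-1)+1$.

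Next I would compute the dimension of the source $B = \bigoplus_{j=1}^n H^0(\Sigma, K^j)$ by Riemann-Roch on $\Sigma$. For $j=1$ one has $\dim H^0(K) = g$, while for $j \geq 2$ the bundle $K^j$ has degree $j(2g-2) > 2g-2$, so $H^1(\Sigma, K^j) \cong H^0(\Sigma, K^{1-j})^* = 0$ and $\dim H^0(K^j) = j(2g-2) - g + 1 = (2j-1)(g-1)$. Summing, $\dim B = g + (g-1)\sum_{j=2}^n (2j-1) = g + (g-1)(n^2 - 1) = n^2(g-1)+1 = g_S$, matching the target.

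Finally, for injectivity, suppose $\rho(b_1, \dots, b_n) = 0$, i.e. the section $\pi^*(b_1)\lambda^{n-1} + \dots + \pi^*(b_n)$ vanishes identically on $S_b$. Fix a point $x \in \Sigma$ that is not a branch point of $\pi : S_b \to \Sigma$; in a local trivialisation of $K$ near $x$, the $n$ points of $\pi^{-1}(x)$ have pairwise distinct $\lambda$-values $\lambda_1, \dots, \lambda_n$, namely the distinct roots of the characteristic polynomial over $x$. The vanishing condition says that the single-variable polynomial $t \mapsto b_1(x) t^{n-1} + \dots + b_n(x)$, of degree at most $n-1$, vanishes at the $n$ distinct values $\lambda_1, \dots, \lambda_n$, hence is identically zero, so $b_1(x) = \dots = b_n(x) = 0$. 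Since the branch points form a finite set and the $b_j$ are holomorphic, $b_j \equiv 0$ for all $j$, giving injectivity and completing the proof. I expect the only genuine content to lie in the dimension match; the injectivity is elementary once one restricts attention to a generic fibre, and the identification $K_{S_b} \cong \pi^*(K^n)|_{S_b}$ needed for the target dimension is already supplied by the constructions preceding the lemma. The one point deserving a word of care is that the generic fibre really does consist of $n$ distinct points, which follows from the smoothness of $S_b$ for $b \in B^{\rm reg}$.
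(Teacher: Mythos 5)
Your proof is correct, but it takes a genuinely different route from the paper. The paper's proof is a one-liner: it quotes the pushforward formula $\pi_* \mathcal{O}_{S} = \mathcal{O}_\Sigma \oplus K^{-1} \oplus \dots \oplus K^{-(n-1)}$ from Beauville--Narasimhan--Ramanan, tensors with $K^n$ (projection formula, $H^0(S_b,\pi^*K^n) \cong H^0(\Sigma, K^n \otimes \pi_*\mathcal{O}_{S_b})$), and takes global sections; since the BNR decomposition is precisely the grading by powers of $\lambda$, this exhibits $\rho$ itself as an isomorphism in one stroke, with injectivity and surjectivity coming for free and no dimension computation needed. You instead argue by equality of dimensions plus injectivity: you identify $\pi^*(K^n)|_{S_b} \cong K_{S_b}$ by composing the two isomorphisms $\varphi, \varphi'$ constructed just before the lemma, compute $g_S = n^2(g-1)+1$ by degree considerations, match it against $\dim B$ via Riemann--Roch (correctly treating the special case $j=1$ where $h^1(\Sigma,K) \neq 0$), and prove injectivity by a clean interpolation argument: over a non-branch point the fibre consists of $n$ points with distinct $\lambda$-values, so a vanishing polynomial of degree $\leq n-1$ in $\lambda$ forces all coefficients to vanish there, and the identity theorem finishes. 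Your approach is self-contained modulo standard Riemann--Roch and avoids invoking the BNR pushforward computation, at the cost of being longer and of needing connectedness of $S_b$ (which you correctly flag, and which the paper justifies via smoothness implying irreducibility of $p_b$ for $b \in B^{\rm reg}$); the paper's approach is shorter, structural, and additionally makes the direct-sum decomposition of the target canonical rather than just dimension-counted.
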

\begin{proof}
From \cite{bnr}, we have
\[
\pi_* \mathcal{O}_S = \mathcal{O}_\Sigma \oplus K^{-1} \oplus \dots \oplus K^{-(n-1)}.
\]
Multiplying both sides by $K^n$ and taking global sections gives the result.
\end{proof}

Let $b \in B^{\rm reg}$ and $p_b \in H^0( T^*\Sigma , \pi^*(K^n))$ the corresponding section of $\pi^*(K^n)$. A tangent vector $X \in T_b B^{\rm reg} \cong B$ gives rise to a deformation of $p_b$ and hence to a deformation of the divisor $S_b \subset T^*\Sigma$ of $p_b$. Such a deformation is described by a section of $N_{S_b}$, hence we get a map, known as the {\em characteristic map} (cf., \cite{gri}):
\begin{equation}\label{equ:charmap}
\chi : T_b B^{\rm reg} \cong B = \bigoplus_{j=1}^n H^0(\Sigma , K^j) \to H^0( S_b , N_{S_b}).
\end{equation}

\begin{proposition}\label{prop:deform}
The characteristic map is given by $\chi = -(\varphi')^{-1} \circ \rho$.
\end{proposition}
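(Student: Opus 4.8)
The plan is to compare the two maps by evaluating both on an arbitrary tangent vector and tracking the first-order variation of the defining section $p_b$. Fix $b \in B^{\rm reg}$ and identify $T_b B^{\rm reg} \cong B$ in the canonical way, so that a tangent vector $X = (b_1, \dots, b_n)$ generates the straight path $t \mapsto b + tX$. The first step is the elementary observation that, directly from the formula $p_b = \lambda^n + \pi^*(p_1)\lambda^{n-1} + \cdots + \pi^*(p_n)$, the variation of the defining section along this path is
\[
\dot p := \frac{d}{dt}\Big|_{t=0} p_{b+tX} = \pi^*(b_1)\lambda^{n-1} + \cdots + \pi^*(b_n),
\]
whose restriction to $S_b$ is exactly $\rho(X) \in H^0(S_b, \pi^*(K^n))$. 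Thus $\rho$ records nothing more than the first-order motion of the section cutting out the spectral curve.

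The second step is to identify the normal vector field $V = \chi(X) \in H^0(S_b, N_{S_b})$ describing the resulting deformation of the divisor, and to express $\varphi'(V)$ in terms of $\dot p$. I would work locally over the covering $\{U_i\}$ used to define $\varphi'$, writing $s_i$ for the local representatives of $p_b$ and $\dot s_i$ for those of $\dot p$ (both transform by the same cocycle $g_{ij}$, since both are sections of $\pi^*(K^n)$). To first order the deformed divisor is cut out by $s_i + t\dot s_i$, and a point $x \in S_b$ moving to $x + tV(x) + O(t^2)$ remains on the deformed zero locus precisely when $(s_i + t\dot s_i)(x + tV) = O(t^2)$; expanding and using $s_i|_{S_b} = 0$ gives $\partial_V s_i|_{S_b} = -\dot s_i|_{S_b}$, that is $\partial_V p_b|_{S_b} = -\rho(X)$. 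By the very definition $\varphi'(V) = \partial_V p_b|_{S_b}$, this reads $\varphi'(\chi(X)) = -\rho(X)$, and applying $(\varphi')^{-1}$ yields $\chi(X) = -(\varphi')^{-1}(\rho(X))$. Since $X$ was arbitrary, $\chi = -(\varphi')^{-1}\circ \rho$.

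The only delicate point, and the step I would expect to require the most care, is the sign, which arises in this implicit-function-theorem computation: raising the defining section by $t\dot p$ forces the zero locus to move in the \emph{opposite} normal direction, producing the minus sign in $\partial_V p_b|_{S_b} = -\rho(X)$. I would also note that the well-definedness of the cocycle $\{\partial_V s_i|_{S_b}\}$ as a section of $\pi^*(K^n)|_{S_b}$, together with its independence of the chosen lift of $V$ to an honest vector field on $T^*\Sigma$, is exactly what was already verified in constructing $\varphi'$ before the lemma, so no new argument is needed there. Everything else is the routine bookkeeping of matching up the two identifications $N_{S_b} \cong \pi^*(K^n)|_{S_b}$ and $T_b B^{\rm reg} \cong B$.
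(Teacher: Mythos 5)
Your proof is correct and follows essentially the same route as the paper's: both expand the first-order vanishing condition for a point moving with the deforming divisor to obtain $\rho(X) + \partial_V p_b|_{S_b} = 0$, hence $\varphi'(\chi(X)) = -\rho(X)$; your use of local trivialisations $s_i + t\dot s_i$ is just a more explicit rendering of the paper's expansion of $p_{b(t)}(x(t)) = 0$.
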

\begin{proof}
Let $b(t)$ be a $1$-parameter family in $B^{\rm reg}$ with $b(0) = b$, and $b'(0) \in T_b B^{\rm reg} \cong B$ a tangent vector at $t=0$. The derivative of $p_{b(t)}$ at $t=0$ is clearly given by $\rho( b'(0))$. Let $x(t)$ denote a $1$-parameter family of points in $T^*\Sigma$ such that $x(t)$ lies on $S_{b(t)}$ for all times $t$, i.e. $p_{b(t)}( x(t) ) = 0$. Note that the projection of $x'(0)$ to the normal bundle is given by $V(x)$, where $V = \chi(b'(0)) \in H^0( S_b , N_{S_b})$ is the normal vector field describing the deformation of $S_b$ in $T^*\Sigma$. Expanding $p_{b(t)}(x(t)) = 0$ to first order at $t=0$, we get $\rho(b'(0))(x) + \partial_{V(x)} p_b = 0$, or $\varphi'( V(x)) = -\rho(b'(0))(x)$. So $V = \chi(b'(0)) = -(\varphi')^{-1} \rho( b'(0))$, as required.
\end{proof}

Note that $-(\varphi')^{-1} \circ \rho : T_b B^{\rm reg} \to H^0( S_b , N_{S_b})$ is an isomorphism. Thus as a consequence of Proposition \ref{prop:deform}, we see that $B^{\rm reg}$ can also be identified with the moduli space of deformations of $S_b \subset $ in $T^*\Sigma$.

\section{Special K\"ahler geometry of the Hitchin system}\label{sec:skhs}

\subsection{Two special K\"ahler geometries}

We now have two possible ways of obtaining a special K\"ahler geometry on $B^{\rm reg}$:
\begin{itemize}
\item[(1)]{View $B^{\rm reg}$ as parametrising a family of spectral curves $S \subset T^*\Sigma$, which are complex Lagrangians, or}
\item[(2)]{View $B^{\rm reg}$ as parametrising a family of complex Lagrangians in $\mathcal{M}_{n,d}^{\rm reg}$, the fibres of the Hitchin system.}
\end{itemize}

We will show that both of these give rise to the same special K\"ahler geometry on $B^{\rm reg}$. 

\begin{definition}
Let $(\mathcal{V} , \omega , \nabla)$ and $(\mathcal{V}' , \omega' , \nabla')$ denote the flat symplectic bundles on $B^{\rm reg}$ corresponding to (1) and (2) above. Let $\phi : TB^{\rm reg} \to \mathcal{V}_\mathbb{C}$ and $\phi' : TB^{\rm reg} \to \mathcal{V}_\mathbb{C}'$) denote the $\mathcal{V}_\mathbb{C}$ and $\mathcal{V}'_\mathbb{C}$-valued $1$-forms corresponding to (1) and (2).
\end{definition}

Consider first the special K\"ahler geometry on $B^{\rm reg}$ given by $( \mathcal{V} , \omega , \nabla , \phi)$. Recall that $\theta$ is the canonical $1$-form on $T^*\Sigma$ and $d\theta$ is the symplectic form on $T^*\Sigma$. The map $B^{\rm reg} \ni b \mapsto [ \theta|_{S_b}] \in H^1( S_b , \mathbb{C})$ defines a section of $\mathcal{V}_\mathbb{C}$ which by abuse of notation we will denote by $\theta$. Then Proposition \ref{prop:dmu} gives $\phi = d_\nabla \theta$, and hence the section $\theta$ determines a special K\"ahler geometry on $B^{\rm reg}$.

Next consider the special K\"ahler geometry on $B^{\rm reg}$ by $( \mathcal{V}' , \omega' , \nabla' , \phi')$. For this we need to introduce the canonical $1$-form on $\mathcal{M}_{n,d}$. Let $(E , \Phi)$ be a semistable Higgs bundle in the non-singular locus of $\mathcal{M}_{n,d}$. The tangent space to $(E,\Phi)$ is given by the hypercohomology group $\mathbb{H}^1( \Sigma , End(E) \buildrel ad_\Phi \over \longrightarrow End(E) \otimes K )$. Thus tangent vectors to $(E,\Phi)$ are represented by pairs $(\dot{A} , \dot{\Phi}) \in \Omega^{0,1}(\Sigma , End(E)) \oplus \Omega^{0,0}(\Sigma , End(E) \otimes K)$ satisfying $\overline{\partial}_E \dot{\Phi} + [ \dot{A} , \Phi] = 0$. Here $\dot{A}$ represents a deformation of holomorphic structure of $E$ and $\dot{\Phi}$ represents a deformation of the Higgs field $\Phi$. The natural map $\mathbb{H}^1( \Sigma , End(E) \buildrel ad_\Phi \over \longrightarrow End(E) \otimes K ) \to H^1(\Sigma , End(E))$, sending a deformation to $(E,\Phi)$ to a deformation of $E$ alone is given in terms of Dolbeault representatives by $(\dot{A} , \dot{\Phi}) \mapsto \dot{A}$. 

The holomorphic symplectic form $\Omega$ on $\mathcal{M}_{n,d}$ is of the form $\Omega = d\mu$, where $\mu$ is a holomorphic $(1,0)$-form, the {\em canonical $1$-form}. Up to an overall scale factor, which is not important for us, the canonical $1$-form is given by
\[
\mu_{(E,\Phi)}( \dot{A} , \dot{\Phi}) = \int_{\Sigma} Tr( \Phi \dot{A} ).
\]

By abuse of notation, let $\mu$ denote the section of $\mathcal{V}'_\mathbb{C}$ on $B^{\rm reg}$ given by $B^{\rm reg} \ni b \mapsto [\mu|_{L_b}] \in H^1( L_b , \mathbb{C})$, where $L_b = f^{-1}(b) \cong Jac_{\tilde{d}}(S_b)$ is the fibre of the Hitchin map over $b$. Then by Proposition \ref{prop:dmu}, $\phi' = d_{\nabla'} \mu$. Hence the section $\mu$ defines a special K\"ahler geometry on $B^{\rm reg}$.

\begin{proposition}
There is a natural isomorphism of local systems $u : (\mathcal{V} , \nabla) \to (\mathcal{V}' , \nabla')$. Under this isomorphism, $\omega$ and $\omega'$ agree up to a positive constant factor.
\end{proposition}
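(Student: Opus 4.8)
The plan is to build the isomorphism $u$ from the identification of the Hitchin fibre $L_b \cong Jac_{\tilde d}(S_b)$ as a torsor over the Jacobian of the spectral curve, and then to compare the two flat symplectic forms by a cohomological computation on the abelian variety $L_b$. First I would construct $u$ fibrewise. Since $L_b$ is a torsor over $Jac(S_b)$, translation-invariance of cohomology gives a canonical isomorphism $H^1(L_b,\mathbb{R}) \cong H^1(Jac(S_b),\mathbb{R})$ independent of any base point, and the Abel--Jacobi map induces the canonical isomorphism $H^1(Jac(S_b),\mathbb{R}) \cong H^1(S_b,\mathbb{R})$. Composing gives $u_b : \mathcal{V}_b = H^1(S_b,\mathbb{R}) \to \mathcal{V}'_b = H^1(L_b,\mathbb{R})$. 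Both steps are functorial in families and send integral classes to integral classes, so $u$ is an isomorphism of the underlying integral local systems $R^1(f_{\mathrm{spec}})_*\mathbb{Z} \cong R^1 f_* \mathbb{Z}$. Since the Gauss--Manin connections $\nabla,\nabla'$ are precisely the flat connections whose covariantly constant sections are these integral local systems, $u$ automatically intertwines $\nabla$ and $\nabla'$, proving that $u:(\mathcal{V},\nabla)\to(\mathcal{V}',\nabla')$ is an isomorphism of local systems.

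For the symplectic forms I would exploit that $L_b$ is a complex torus, so $H^\ast(L_b,\mathbb{R}) = \wedge^\ast H^1(L_b,\mathbb{R})$ and the class $h = [\,\omega_I|_{L_b}\,] \in H^2(L_b,\mathbb{R}) = \wedge^2 H^1(L_b,\mathbb{R})$ is a polarization. Writing $n = \dim_{\mathbb{C}} L_b = g_S$ and using the elementary identity $\alpha\wedge\beta\wedge h^{n-1} = \tfrac1n\, Q_h(\alpha,\beta)\, h^n$ for a symplectic class $h$ (where $Q_h$ is the symplectic form on $H^1(L_b)$ determined by $h$), the defining formula $\omega'_b(\alpha,\beta) = \int_{L_b}\alpha\wedge\beta\wedge h^{n-1}$ reduces to $\omega'_b = (n-1)!\,\big(\tfrac{1}{n!}\textstyle\int_{L_b} h^n\big)\,Q_h$, a positive multiple of $Q_h$. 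On the other side $\omega_b$ is the intersection form on $H^1(S_b)$, which under $u$ is exactly the principal polarization $\Theta_b$ of $Jac(S_b)$. Hence the proposition is equivalent to the statement that $[\,\omega_I|_{L_b}\,]$ is a positive multiple of $\Theta_b$, with ratio independent of $b$.

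The main obstacle is therefore this proportionality of polarizations, since distinct polarizations on an abelian variety need not be proportional. I would resolve it by computing $\omega_I|_{L_b}$ on the tangent space $T_{[L]}L_b \cong H^1(S_b,\mathcal{O}_{S_b})$: the hyperk\"ahler metric pairs these deformations by the $L^2$ inner product, which under Hodge theory on $S_b$ is given by the intersection pairing on $H^1(S_b)$ restricted to its $(0,1)$-part, so $\omega_I|_{L_b}$ is a constant multiple of $\Theta_b$, the constant being given by the same Hodge-theoretic formula for every $b$. (Alternatively, granting that the monodromy representation on $\mathcal{V}\otimes\mathbb{C}$ is irreducible, the flat alternating forms $\omega$ and $u^\ast\omega'$ span a one-dimensional space by Schur's lemma and are proportional outright.) Finally I would pin down the sign by evaluating both forms on a single integral symplectic pair $a_i,b_i$: positive-definiteness of the two special K\"ahler metrics, equivalently the Hodge--Riemann bilinear relations, forces the constant to be positive, which completes the proof.
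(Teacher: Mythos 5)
Your construction of $u$ coincides with the paper's: the torsor structure gives the canonical identification $H^1(Jac_{\tilde{d}}(S_b),\mathbb{R}) \cong H^1(Jac(S_b),\mathbb{R})$, the identification $Jac(S_b) = H_1(S_b,\mathbb{R})/H_1(S_b,\mathbb{Z})$ gives $H^1(Jac(S_b),\mathbb{R}) \cong H^1(S_b,\mathbb{R})$, and compatibility with the integral lattices in families shows $u$ intertwines the Gauss--Manin connections. Your linear-algebra reduction $\alpha \wedge \beta \wedge h^{n-1} = \tfrac{1}{n} Q_h(\alpha,\beta)\, h^n$ is correct, and you have rightly identified the crux: that the class $[\,\omega_I|_{L_b}\,]$ is proportional to the principal polarisation $\Theta_b$. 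The paper simply asserts this ("the natural K\"ahler form $h = \omega_I$ \dots restricted to the fibre \dots gives a multiple of the usual principal polarisation") without proof, so your attempt to actually justify it is to your credit.

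However, your primary justification of that crux is unsound as stated. The claim that the hyperk\"ahler metric restricted to the fibre pairs deformations $\alpha_X \in H^1(S_b,\mathcal{O})$ by the $L^2$/intersection pairing on $S_b$ is precisely the definition of the semi-flat metric $g^{\rm sf}$ along the fibres, and --- as this paper's own introduction emphasises --- the genuine hyperk\"ahler metric $g$ differs from $g^{\rm sf}$ by instanton corrections, so the pointwise restriction $\omega_I|_{L_b}$ is not expected to be the translation-invariant form your Hodge-theoretic formula produces. Only the cohomology class of $\omega_I|_{L_b}$ enters $\omega'$, but your computation does not legitimately reach the class either: the $L^2$ metric on $\mathcal{M}_{n,d}$ is evaluated on harmonic representatives of the linearised Hitchin equations on $\Sigma$, which are not the pushed-forward Dolbeault representatives $(\dot{A},0)$ built from $\alpha_X$, and you have no control over the discrepancy after averaging over the torus. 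Your Schur's-lemma fallback would genuinely close the gap, but it imports irreducibility of the monodromy representation on $H^1(S_b,\mathbb{C})$, a nontrivial fact you do not prove (nor does the paper). One simplification you missed: since $\omega$ and $u^*\omega'$ are both flat and $B^{\rm reg}$ is connected ($\mathcal{D}$ is a hypersurface in the vector space $B$), proportionality at a \emph{single} point propagates by parallel transport with constant ratio, so the independence of the constant from $b$ is automatic and you need not argue that your formula gives "the same constant for every $b$". Your positivity argument via the Hodge--Riemann relations is fine.
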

\begin{proof}
Let $Z^{\rm reg}$ denote the universal moduli space of spectral curves, which may be defined as
\[
Z^{\rm reg} = \{ (x , b ) \in T^*\Sigma \times B^{\rm reg} \; | \; p_b(x) = 0 \}.
\]
Thus $Z^{\rm reg}$ is a fibre bundle $q : Z^{\rm reg} \to B^{\rm reg}$ over $B^{\rm reg}$ whose fibre over $b \in B^{\rm reg}$ is the spectral curve $S_b$. Let $j : Z^{\rm reg} \to T^*\Sigma$ be given by $j(x,b) = x$. Then $\mathcal{V} = R^1 q_*\mathbb{R}$ is the local system $\mathcal{V}_b = H^1( S_b , \mathbb{R})$ equipped with the Gauss-Manin connection and $\omega$ is given by 
\[
\omega_b(\alpha , \beta) = \int_{S_b} \alpha \wedge \beta.
\]
Recall the Hitchin map $f : \mathcal{M}_{n,d}^{\rm reg} \to B^{\rm reg}$ whose fibre over $b \in B^{\rm reg}$ is $Jac_{\tilde{d}}(S_b)$. Then $\mathcal{V}' = R^1 f_* \mathbb{R}$ is the local system $\mathcal{V}'_b = H^1( Jac_{\tilde{d}}(S_b) , \mathbb{R})$ with the Gauss-Manin connection and $\omega'$ given by
\[
\omega'_b(\alpha' , \beta') = \int_{Jac_{\tilde{d}}(S_b)} \alpha' \wedge \beta' \wedge h^{n-1}.
\]
Recall that $Jac_{\tilde{d}}(S_b)$ is a torsor over $Jac(S_b)$. Hence there is a canonical isomorphism $H^1( Jac_{\tilde{d}}(S_b) , \mathbb{R}) \cong H^1(Jac(S_b) , \mathbb{R})$. We may identify the Jacobian $Jac(S_b)$ with $H_1( S_b , \mathbb{R})/H_1(S_b , \mathbb{Z})$, hence we have a canonical isomorphism $H^1(Jac(S_b) , \mathbb{R}) \cong H^1(S_b , \mathbb{R})$. By composing we get a canonical isomorphism $H^1( S_b , \mathbb{R}) \cong H^1( Jac_{\tilde{d}}(S_b) , \mathbb{R})$. Clearly this isomorphism can be carried out fibrewise over $B^{\rm reg}$ to give an isomorphism of flat vector bundles $\mathcal{V} \cong \mathcal{V}'$. Furthermore, the natural K\"ahler form $h = \omega_I$ on $\mathcal{M}_{n,d}$ restricted to the fibre $Jac_{\tilde{d}}(S_b)$ gives a multiple of the usual principal polarisation on $Jac_{\tilde{d}}(S_b)$. It follows that $\omega$ and $\omega'$ agree up to a positive constant factor (which is independent of $b$).
\end{proof}

\begin{proposition}\label{prop:canonicals}
Let $b \in B^{\rm reg}$. Under the canonical isomorphism $H^1(Jac_{\tilde{d}}(S_b) , \mathbb{C}) \cong H^1( S_b , \mathbb{C})$, we have $\mu|_{Jac_{\tilde{d}}(S_b)} \cong \theta|_{S_b}$.
\end{proposition}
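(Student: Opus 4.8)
The plan is to reduce the statement to an identity of Serre-duality pairings and then to verify that identity using the spectral correspondence of Theorem \ref{thm:specdat}. First I would note that both objects are holomorphic $1$-forms: $\theta|_{S_b}$ is a holomorphic $1$-form on $S_b$ (the restriction of the tautological $1$-form to the complex curve $S_b$, represented by $\lambda$), while $\mu|_{L_b}$ is a closed $(1,0)$-form on $L_b = Jac_{\tilde{d}}(S_b)$ (closed because $d\mu = \Omega$ and $\Omega|_{L_b} = 0$ on the Lagrangian fibre), hence holomorphic. Since every holomorphic $1$-form on the abelian variety $Jac_{\tilde{d}}(S_b)$ is translation invariant, $\mu|_{L_b}$ is determined by the linear functional it induces on the tangent space $T_L L_b \cong H^1(S_b , \mathcal{O}_{S_b})$ at any point $L$. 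Under the canonical isomorphism $H^1(Jac_{\tilde{d}}(S_b) , \mathbb{C}) \cong H^1(S_b , \mathbb{C})$, the space of invariant holomorphic $1$-forms $H^{1,0}(Jac_{\tilde{d}}(S_b))$ is identified with $H^0(S_b , K_{S_b})$, the pairing with tangent vectors being Serre duality $H^0(S_b , K_{S_b}) \times H^1(S_b , \mathcal{O}_{S_b}) \to \mathbb{C}$. Thus the proposition reduces to showing that, for every $\dot a \in H^1(S_b , \mathcal{O}_{S_b})$, the value of $\mu|_{L_b}$ on the corresponding tangent vector equals the Serre pairing $\langle \theta|_{S_b} , \dot a \rangle = \int_{S_b} (\theta|_{S_b}) \wedge \dot a$, where $\dot a$ is represented by a Dolbeault $(0,1)$-form.

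Next I would translate the tangent vector through the spectral correspondence. Writing $E = \pi_* L$ and $\Phi = \pi_* \lambda$ as in Theorem \ref{thm:specdat}, a class $\dot a \in H^1(S_b , \mathcal{O}_{S_b})$ deforms $L$ and, after push-forward, deforms $E$. The structural point is that $\pi_* \mathcal{O}_{S_b}$ is a sheaf of $\mathcal{O}_\Sigma$-algebras acting on $E = \pi_* L$ by multiplication, and that the deformation map $H^1(S_b , \mathcal{O}_{S_b}) = H^1(\Sigma , \pi_* \mathcal{O}_{S_b}) \to H^1(\Sigma , End(E))$ sending $\dot a$ to the Dolbeault class $\dot A$ of the induced Higgs deformation is precisely this action. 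Under the action the tautological section $\lambda$ maps to $\Phi$, so over a point $x \in \Sigma$ with $n$ distinct preimages $q_1 , \dots , q_n \in S_b$ both $\Phi$ and $\dot A$ are simultaneously diagonalised, with entries $\lambda(q_i)$ and $\dot a(q_i)$ respectively.

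Finally I would compute the pairing. With the above,
\[
\mu_{(E,\Phi)}(\dot A , \dot\Phi) = \int_\Sigma Tr(\Phi \dot A) = \int_\Sigma \sum_{q \in \pi^{-1}(x)} \lambda(q)\, \dot a(q) = \int_\Sigma \pi_*( \lambda\, \dot a ) = \int_{S_b} \lambda\, \dot a ,
\]
where the penultimate equality is the fact that the trace of multiplication by $g \in \pi_* \mathcal{O}_{S_b}$ on $E = \pi_* L$ is the fibrewise sum $\pi_* g$, and the last is the projection formula for integration of top-degree forms along the finite map $\pi$. Since $\mu$ enters only through $\dot A$, the component $\dot\Phi$ is irrelevant. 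The right-hand side is exactly the Serre pairing $\langle \theta|_{S_b} , \dot a \rangle$, which proves $\mu|_{L_b} \cong \theta|_{S_b}$ once the overall scale of $\mu$ is fixed appropriately (consistent with the scaling freedom already noted for $\mu$).

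I expect the main obstacle to be the middle step: rigorously identifying the Dolbeault representative of the pushed-forward deformation $\dot A$ with multiplication by $\dot a$, and verifying that $Tr$ of such a multiplication endomorphism of $\pi_* L$ equals the fibrewise sum $\pi_* (\,\cdot\,)$. This requires care with the Dolbeault models for $\pi_*$ and with the ramification points of $\pi$, where the simultaneous-diagonalisation picture degenerates; one must argue that the integrals are unaffected, for instance by working over the generic (unramified) locus of $\Sigma$ and using that the ramification image has measure zero.
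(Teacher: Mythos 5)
Your proposal follows essentially the same route as the paper: both arguments reduce the statement, via translation invariance of $\mu|_{L_b}$ and the identification of invariant holomorphic $1$-forms on $Jac_{\tilde{d}}(S_b)$ with $H^0(S_b , K_{S_b})$, to the Serre-duality identity $\mu(X) = \int_{S_b} \theta \wedge \alpha_X$, and both then evaluate $\int_\Sigma Tr(\Phi \dot{A})$ by working over the unramified locus, where $E$, $\Phi$ and $\dot{A}$ are simultaneously diagonalised sheet-by-sheet, the trace becomes the fibrewise sum, and pushing forward gives exactly $\int_{S_b} \theta \wedge \alpha_X$. Your structural framing of the middle step (the deformation map $H^1(S_b , \mathcal{O}) \to H^1(\Sigma , End(E))$ as the multiplication action of $\pi_* \mathcal{O}_{S_b}$ on $\pi_* L$) is correct and is a slightly more conceptual packaging of what the paper writes out in Dolbeault terms.

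The one genuine gap is precisely at the obstacle you flag, and your proposed patch is not the right one. The difficulty at ramification points is not that a measure-zero set might affect the integrals: it is that ``multiplication by $\dot{a}$'', transported through the sheet decomposition, is in general only a bounded and not a smooth $(0,1)$-form with values in $End(E)$ at the branch points, so $(\dot{A} , 0)$ is not manifestly a legitimate Dolbeault representative of the tangent vector, and one cannot directly run the deformation-theoretic identification of $(E , \overline{\partial}_E + t\dot{A} , \Phi)$ with the family $\pi_*(L_t)$. The paper resolves this at the level of the choice of representative rather than by a genericity argument: using the Dolbeault lemma, one adds a $\overline{\partial}$-exact term so that $\alpha_X$ vanishes identically in a neighbourhood of every point of $S_b$ lying over a branch point; the sheet-diagonal $\dot{A}$ then extends smoothly by zero over the branch points, $\Phi$ remains holomorphic for all the operators $\overline{\partial}_E + t\dot{A}$, and your trace computation (localised by a partition of unity) goes through verbatim. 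With that one replacement of your ``generic locus plus measure zero'' step, your proof coincides with the paper's.
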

\begin{proof}
The restriction of $\mu$ to the fibre $Jac_{\tilde{d}}(S_b)$ over $b$ is a holomorphic $1$-form, which is necessarily translation invariant, since this is true of all holomorphic $1$-forms on a complex torus. The tangent space to $Jac_{\tilde{d}}(S_b)$ at a point $[L] \in Jac_{\tilde{d}}(S_b)$ is canonically isomorphic to $H^1(S_b , \mathcal{O})$. Under the isomorphism $H^1(Jac_{\tilde{d}}(S_b) , \mathbb{C}) \cong H^1( S_b , \mathbb{C})$, the pairing of a tangent vector with a holomorphic $1$-form coincides with the Serre duality pairing $H^0(S_b , K_{S_b}) \otimes H^1(S_b , \mathcal{O}) \to \mathbb{C}$. In other words, let $X \in T_{[L]} Jac_{\tilde{d}}(S_b)$ and let $\alpha_X$ be the corresponding element of $H^1(S_b , \mathcal{O})$. Then the statement of the proposition is equivalent to showing:
\[
\mu(X) = \int_{S_b} \theta \wedge \alpha_X.
\]
The point $[L] \in Jac_{\tilde{d}}(S_b)$ corresponds to a line bundle $L \to S_b$. Under the spectral data construction, the Higgs bundle $(E,\Phi)$ corresponding to $L$ is given by $E = \pi_*(L)$ and $\Phi$ by pushing forward the map $\lambda : L \to L \otimes \pi^*K$. We view the holomorphic line bunde $L$ as a $\mathcal{C}^\infty$-line bundle together with a $\overline{\partial}$-operator $\overline{\partial}_L$. Then the tangent vector $X$ corresponds to the tangent at $t=0$ of the $1$-parameter family of deformations of $L$ given by $\overline{\partial}_{L_t} = \overline{\partial}_L + t \alpha_X$. Similarly view $E$ as a $\mathcal{C}^\infty$-vector bundle with a $\overline{\partial}$-operator $\overline{\partial}_E$. Let $E_t = \pi_*(L_t)$. We will construct an explicit family of $\overline{\partial}$-operators $\overline{\partial}_{E_t}$ on the fixed $\mathcal{C}^\infty$-vector bundle $E$ such that $(E , \overline{\partial}_{E_t} )$ is isomorphic to $E_t$.

By the Dolbeault Lemma, adding a $\overline{\partial}$-exact term to $\alpha_X$ if necessary, we can assume that $\alpha_X$ vanishes identically in a neighborhood of each point of $S_b$ lying over a branch point. Now let $U \subset \Sigma$ be an open, simply-connected subset containing no branch points. Then the pre-image $\pi^{-1}(U) = U_1 \cup U_2 \cup \dots \cup U_n$ is the disjoint union of $n$ open subsets of $S_b$, and the restriction $\pi : U_j \to U$ of $\pi$ to each $U_j$ is a diffeomorphism. Over $U$, we have a canonical isomorphism $E|_U \cong L|_{U_1} \oplus L|_{U_2} \oplus \dots \oplus L|_{U_n}$. Define an $End(E)$-valued $(0,1)$-form $\dot{A}|_U$ on $U$ by
\[
\dot{A}|_U = \text{diag}\left( (\pi^*)^{-1} (\alpha_X|_{U_1}) , \dots , (\pi^*)^{-1} (\alpha_X|_{U_n}) \right).
\]
Here $\pi^* : \Omega^{0,1}(U) \to \Omega^{0,1}(U_j)$ denotes the pullback of $(0,1)$-forms and $(\pi^*)^{-1} : \Omega^{0,1}(U_j) \to \Omega^{0,1}(U)$ the inverse map. The $\dot{A}|_U$ defined in this way patch together to give an $End(E)$-valued $(0,1)$-form on $\Sigma$ minus the branch points. But since $\alpha_X$ vanishes in a neighborhood of each point of $S_b$ lying over a branch point, we have that $\dot{A}$ vanishes in a punctured neighborhood of each branch point. We extend $\dot{A}$ by zero over each branch point to get a well-defined $\dot{A} \in \Omega^{0,1}( \Sigma , End(E))$. Set $\overline{\partial}_{E_t} = \overline{\partial}_E + t\dot{A}$. By construction of $\dot{A}$, it is clear that $(E , \overline{\partial}_{E_t})$ is isomorphic to $E_t$. Observe that $\lambda : L_t \to L_t \otimes \pi^* K$ pushes down to $\Phi$ independent of $t$ and note that $\Phi$ is holomorphic with respect to $\overline{\partial}_{E_t}$ for all $t$ (because $\dot{A}$ and $\Phi$ are simultaneuosly diagonalisable away from the branch points). So $(E_t , \Phi )$ is the $1$-parameter family of Higgs bundles corresponding to $L_t$. In particular, differentiating at $t=0$, we have that $(\dot{A} , \dot{\Phi}) = (\dot{A} , 0) \in \mathbb{H}^1( \Sigma , End(E) \buildrel ad_\Phi \over \longrightarrow End(E) \otimes K )$ is the tangent vector corresponding to $X \in T_{[L]} Jac_{\tilde{d}}(S_b)$. Let $U \subset \Sigma$ be a simply-connected open subset of $\Sigma$ containing no branch points, as above, and let $\psi$ be a smooth compactly supported function on $U$. Over $U$, we have
\[
\Phi = \text{diag}\left( \lambda|_{U_1} \circ \pi^{-1} , \dots , \lambda|_{U_n} \circ \pi^{-1} \right)
\]
and so
\begin{equation*}
\begin{aligned}
\int_U \psi Tr( \Phi \dot{A} ) &= \sum_{j=1}^n \int_U \psi \left( \lambda|_{U_j} \circ \pi^{-1} \right) \wedge \left( (\pi^*)^{-1}( \alpha_X|_{U_j}) \right)\\
&= \sum_{j=1}^n \int_{U_j} \pi^*(\psi) \; \theta \wedge \alpha_X \\
&= \int_{\pi^{-1}(U)} \pi^*(\psi) \; \theta \wedge \alpha_X.
\end{aligned}
\end{equation*}
Combining this with a partition of unity argument, we get
\begin{equation*}
\begin{aligned}
\mu(X) = \int_\Sigma Tr( \Phi \dot{A}) = \int_S \theta \wedge \alpha_X
\end{aligned}
\end{equation*}
as required.
\end{proof}

\begin{corollary}
The special K\"ahler geometries on $B^{\rm reg}$ given by (1) and (2) coincide (up to a constant rescaling of the metric $g$ and symplectic form $\omega$).
\end{corollary}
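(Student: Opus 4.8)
The plan is to observe that the entire special K\"ahler structure produced by the bi-Lagrangian construction is encoded in the quadruple $(\mathcal{V}, \omega, \nabla, s)$: a flat symplectic bundle together with a section $s$ whose covariant derivative $\phi = d_\nabla s$ satisfies (SK1)--(SK3). By Proposition \ref{prop:dmu}, geometry (1) is exactly the one determined by $(\mathcal{V}, \omega, \nabla, \theta)$ and geometry (2) the one determined by $(\mathcal{V}', \omega', \nabla', \mu)$. Thus it suffices to produce a single isomorphism of flat symplectic bundles carrying one quadruple to the other, up to the allowed rescaling of the symplectic form.

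Such an isomorphism is already in hand. First I would invoke the isomorphism of local systems $u : (\mathcal{V}, \nabla) \to (\mathcal{V}', \nabla')$ constructed in the proposition immediately preceding, which satisfies $u^*\omega' = \lambda\, \omega$ for a positive constant $\lambda$ independent of $b$. Then I would apply Proposition \ref{prop:canonicals}: since $u$ is built fibrewise from the canonical identification $H^1(S_b, \mathbb{C}) \cong H^1(Jac_{\tilde d}(S_b), \mathbb{C})$, that proposition says precisely $u(\theta(b)) = \mu(b)$ for every $b \in B^{\rm reg}$, i.e. $u \circ \theta = \mu$ as sections of $\mathcal{V}'_\mathbb{C}$. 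Hence the two quadruples are intertwined by $u$ except that the symplectic forms differ by the scalar $\lambda$.

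It then remains to track which pieces of the special K\"ahler data feel the factor $\lambda$. Recall from Theorem \ref{thm:bilag} that, in the bi-Lagrangian model, the complex structure $I$ on the base is the restriction of the (fixed) complexification complex structure on $\mathcal{V}_\mathbb{C}$, and the flat affine connection is the trivial connection in the coordinates obtained by projecting $s$ to the real part $\mathcal{V}$; both depend only on $(\mathcal{V}, \nabla, s)$ and not at all on $\omega$. As $u$ is a real-linear isomorphism of flat bundles intertwining the two complexification structures and matching $\theta$ with $\mu$, the underlying complex structures and flat connections of the two geometries coincide on the nose. By contrast, the metric is the pullback of $g(\alpha, \beta) = Re\!\left( \tfrac{i}{2}\omega^c(\alpha, \overline{\beta}) \right)$ and the K\"ahler form is the pullback of $\omega$, so the replacement $\omega \mapsto \lambda\,\omega$ rescales both the metric $g$ and the symplectic form on $B^{\rm reg}$ by the single constant $\lambda$. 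This is exactly the coincidence up to constant rescaling asserted in the corollary.

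The argument is entirely formal once the two preceding propositions are granted; the only point requiring genuine care is that $\lambda$ is a true constant over all of $B^{\rm reg}$, so that the rescaling is global rather than fibrewise. This is precisely the content of the earlier observation that $h|_{Jac_{\tilde d}(S_b)}$ is a fixed multiple of the principal polarisation, with the multiple independent of $b$. The main conceptual work --- identifying the \emph{a priori} distinct local systems and their canonical sections --- has already been carried out in Proposition \ref{prop:canonicals} and the isomorphism preceding it, so no further obstacle arises.
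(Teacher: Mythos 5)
Your argument is correct and is precisely the reasoning the paper intends: the corollary is stated as an immediate consequence of the two preceding propositions, and you have simply made explicit how the isomorphism $u$ intertwines the quadruples $(\mathcal{V},\omega,\nabla,\theta)$ and $(\mathcal{V}',\omega',\nabla',\mu)$, with the constant $\lambda$ affecting only $g$ and the K\"ahler form. Your added care in noting that the complex structure and flat connection are insensitive to the rescaling of $\omega$, and that $\lambda$ is constant over $B^{\rm reg}$, is a faithful elaboration rather than a departure.
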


\subsection{Special K\"ahler geometry of $B^{\rm reg}$}
We summarise our findings so far concerning the special K\"ahler geometry of the Hitchin system and make some further observations. Recall that $B^{\rm reg}$ is the regular locus of the Hitchin base, that is $B^{\rm reg} = B \setminus \mathcal{D}$, where $\mathcal{D}$ is the locus of singular spectral curves. On $B^{\rm reg}$ we have the flat symplectic bundle $(\mathcal{V} , \omega, \nabla )$ whose fibre over $b$ is $\mathcal{V}_b = H^1( S_b , \mathbb{R})$. The flat connection $\nabla$ is the Gauss-Manin connection and the symplectic structure $\omega$ is the intersection form $\omega(\alpha , \beta) = \int_{S_b} \alpha \wedge \beta$. Let $\theta$ denote the canonical $1$-form on $T^*\Sigma$. We think of $\theta$ as a section $\theta : B^{\rm reg} \to \mathcal{V}_{\mathbb{C}}$ which sends $b$ to $[ \theta |_{S_b}] \in H^1(S_b , \mathbb{C})$. For any $b \in B^{\rm reg}$, let $a_1, \dots , a_{g_S} , b_1 , \dots , b_{g_S}$ be a symplectic basis for $H_1(S_b , \mathbb{R})$ (where $g_S$ denotes the genus of the spectral curves). We can extend $a_1 , \dots , b_{g_S}$ to covariantly constant sections of $\mathcal{V}^*$ in any simply connected open neighborhood $U \subset B^{\rm reg}$ of $b$. Then the holomorphic coordinate systems $(z^1 , \dots , z^{g_S})$ and $(w_1 , \dots , w_{g_S})$ are given by:
\begin{equation}\label{equ:hitcoords}
z^i(u) = \int_{a_i} \theta, \quad \quad w_i(u) = \int_{b_i} \theta.
\end{equation}
For each $u \in U$, let $\omega_1(u) , \dots , \omega_{g_S}(u)$ be the corresponding normalised basis of holomorphic $1$-forms on $S_u$, which are characterised by:
\begin{equation}\label{equ:basisholo}
\int_{a_i} \omega_j(u) = \delta_{ij}.
\end{equation}
The period matrix of $S_u$ with respect to the symplectic basis $a_1 , \dots , b_{g_S}$ is
\[
\tau_{ij}(u) = \int_{b_i} \omega_j(u).
\]
In these coordinates the special K\"ahler metric is given in terms of its K\"ahler form by
\[
\omega = -\frac{i}{2} Im( \tau_{ij} ) dz^i \wedge d\overline{z}^j.
\]
Since $\theta$ is a holomorphic $1$-form, it can be written as a linear combination of the $\omega_i$. From Equations (\ref{equ:hitcoords}) and (\ref{equ:basisholo}), we immediately get
\[
\theta = z^i \omega_i.
\]
This equation holds not just at the level of cohomology classes, but as $1$-forms. Combining this with (\ref{equ:hitcoords}), we also find
\[
w_i = \int_{b_i} \theta = \int_{b_i} z^j \omega_j = z^j \tau_{ij}.
\]
Such a relation between the $z$ and $w$-coordinates does not hold for special K\"ahler manifolds in general. We now deduce a simple formula for the K\"ahler potential:
\begin{proposition}
The K\"ahler potential $K$ in Equation (\ref{equ:kahlerpot}) is given by:
\[
K = -\frac{i}{4} \int_{S} \theta \wedge \overline{\theta}.
\]
\end{proposition}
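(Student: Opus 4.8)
The plan is to compute both sides of the claimed identity explicitly in terms of the period data and match them. Starting from the relation $\theta = z^i \omega_i$ established just above (valid at the level of $1$-forms, not merely cohomology classes), I would expand the right-hand side as
\[
\int_S \theta \wedge \overline{\theta} = z^i \overline{z}^j \int_{S} \omega_i \wedge \overline{\omega}_j,
\]
so that everything reduces to computing the Hermitian pairing $\int_S \omega_i \wedge \overline{\omega}_j$ of the normalised holomorphic $1$-forms.

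The key computational input is the Riemann bilinear relation. For closed $1$-forms $\alpha, \beta$ on $S$ and the symplectic basis $a_1, \dots, a_{g_S}, b_1, \dots, b_{g_S}$, one has $\int_S \alpha \wedge \beta = \sum_k \left( \int_{a_k}\alpha \int_{b_k}\beta - \int_{b_k}\alpha \int_{a_k}\beta \right)$. Applying this with $\alpha = \omega_i$ and $\beta = \overline{\omega}_j$, and using the normalisation $\int_{a_k}\omega_i = \delta_{ik}$, $\int_{b_k}\omega_i = \tau_{ik}$ together with the fact that conjugation commutes with integration over cycles, I obtain $\int_S \omega_i \wedge \overline{\omega}_j = \overline{\tau_{ij}} - \tau_{ij} = -2i\, Im(\tau_{ij})$, where the symmetry $\tau_{ij} = \tau_{ji}$ is used. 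Substituting back gives $\int_S \theta \wedge \overline{\theta} = -2i\, Im(\tau_{ij})\, z^i \overline{z}^j$, and hence $-\tfrac{i}{4}\int_S \theta \wedge \overline{\theta} = -\tfrac12\, Im(\tau_{ij})\, z^i \overline{z}^j$.

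It remains to identify this with the K\"ahler potential of Equation (\ref{equ:kahlerpot}), namely $K = -\tfrac12\, Im(w_i \overline{z}^i)$. Using the relation $w_i = \tau_{ij} z^j$ derived above, I would show $Im(w_i \overline{z}^i) = Im(\tau_{ij} z^j \overline{z}^i) = Im(\tau_{ij})\, z^i \overline{z}^j$; this follows by writing the imaginary part as $Im(\zeta) = \tfrac{1}{2i}(\zeta - \overline{\zeta})$, relabelling indices, and invoking the symmetry of $\tau$. Comparing the two expressions finishes the proof.

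I expect the only real subtlety to be bookkeeping: keeping the index placement and complex conjugations consistent through the Riemann bilinear relation, and confirming that the sign and orientation conventions for $\int_S \alpha \wedge \beta$ are precisely those making $Im(\tau)$ positive definite, as required for $g$ to be a genuine metric. Everything else is routine linear algebra once the pairing $\int_S \omega_i \wedge \overline{\omega}_j = -2i\, Im(\tau_{ij})$ is in hand.
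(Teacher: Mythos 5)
Your proof is correct, but it takes a slightly different route from the paper's. The paper applies the Riemann bilinear relation \emph{directly} to $\theta \wedge \overline{\theta}$: since $z^i = \int_{a_i}\theta$ and $w_i = \int_{b_i}\theta$ by definition, one gets in one step
\[
-\frac{i}{4}\int_S \theta \wedge \overline{\theta} = \frac{1}{4i}\sum_i \left( z^i \overline{w_i} - w_i \overline{z^i} \right) = -\frac{1}{2}\sum_i Im( w_i \overline{z^i} ) = K,
\]
with no need for the expansion $\theta = z^i \omega_i$, the relation $w_i = \tau_{ij}z^j$, or the symmetry of $\tau$. You instead decompose $\theta$ in the normalised basis, compute the Hermitian pairing $\int_S \omega_i \wedge \overline{\omega}_j = -2i\, Im(\tau_{ij})$ via the same bilinear relation one level down, and then reconcile with Equation (\ref{equ:kahlerpot}) using $w_i = \tau_{ij} z^j$ and $\tau_{ij} = \tau_{ji}$. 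All the inputs you invoke ($\theta = z^i\omega_i$ and $w_i = \tau_{ij}z^j$) are established in the paper immediately before this proposition, so there is no gap; your index manipulations (including $Im(\tau_{ij} z^j \overline{z}^i) = Im(\tau_{ij})\, z^i \overline{z}^j$, which uses that $Re(\tau)$ and $Im(\tau)$ are real symmetric) check out. What your route buys is the explicit intermediate formula $K = -\tfrac{1}{2} Im(\tau_{ij})\, z^i \overline{z}^j$, which makes manifest the consistency with the K\"ahler form $\omega = -\tfrac{i}{2} Im(\tau_{ij})\, dz^i \wedge d\overline{z}^j$; what the paper's route buys is brevity and the fact that it uses only the period definitions of $z^i, w_i$, so it would survive in settings where the Hitchin-specific identities $\theta = z^i\omega_i$ and $w_i = \tau_{ij}z^j$ fail.
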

\begin{proof}
This is a straightforward computation:
\begin{equation*}
\begin{aligned}
-\frac{i}{4} \int_S \theta \wedge \overline{\theta} &= \frac{1}{4i} \sum_{i=1}^{g_S} \left( \int_{a_i} \theta \int_{b_i} \overline{\theta} - \int_{b_i} \theta \int_{a_i} \overline{\theta} \right) \\
&= \frac{1}{4i} \sum_{i=1}^{g_S} \left( z^i \overline{w_i} - w_i \overline{z^i} \right) \\
&= \frac{1}{2} \sum_{i=1}^{g_S} Im( z^i \overline{w_i} ) \\
&= -\frac{1}{2} \sum_{i=1}^{g_S} Im( w_i \overline{z^i} ) = K.
\end{aligned}
\end{equation*}
\end{proof}

\begin{remark}
An interesting feature of this formula is that is does not depend on a choice of symplectic basis and $K$ is thus a {\em globally defined} K\"ahler potential on $B^{\rm reg}$.
\end{remark}

Recall that the moduli space of Higgs bundles has a natural $\mathbb{C}^*$-action given by rescaling the Higgs field $(E , \Phi) \mapsto (E , c \Phi)$, $c \in \mathbb{C}^*$. The $\mathbb{C}^*$-action on $\mathcal{M}_{n,d}$ is compatible with a $\mathbb{C}^*$-action on the base given by
\[
c (  a_1 , a_2 , \dots , a_n ) = ( ca_1 , c^2 a_2 , \dots , c^n a_n).
\]
Let $\xi$ be the vector field on $B$ generating this action.
\begin{proposition}
In terms of local special K\"ahler coordinates $(z^1 , \dots , z^{g_S})$ or $(w_1 , \dots , w_{g_S})$ on $B^{\rm reg}$, we have:
\[
\xi = z^i \frac{\partial}{\partial z^i} = w_i \frac{\partial}{\partial w_i}.
\]
\end{proposition}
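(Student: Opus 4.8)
The plan is to identify the infinitesimal generator $\xi$ of the $\mathbb{C}^*$-action directly through its effect on the holomorphic coordinates $z^i = \int_{a_i} \theta$, and then to translate this into a statement about the coordinate vector fields. The key observation is that the $\mathbb{C}^*$-action on $B$ scales the defining data of the spectral curve: under $b \mapsto c \cdot b = (c a_1, c^2 a_2, \dots, c^n a_n)$, the section $p_b = \lambda^n + \pi^*(p_1)\lambda^{n-1} + \dots + \pi^*(p_n)$ transforms so that the spectral curve $S_{c \cdot b}$ is the image of $S_b$ under the fibrewise scaling map $\sigma_c : T^*\Sigma \to T^*\Sigma$, $\sigma_c(x) = c \cdot x$ on the cotangent fibres. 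The crucial compatibility is that the canonical $1$-form satisfies $\sigma_c^* \theta = c\, \theta$, since $\theta$ is homogeneous of degree one along the cotangent directions.

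First I would make this scaling precise: one checks that if $x = (q, y\,dx_\Sigma)$ is a point of $T^*\Sigma$ written in terms of a local cotangent fibre coordinate, then the curve $\sigma_c(S_b)$ is cut out exactly by $p_{c \cdot b}$, so $S_{c\cdot b} = \sigma_c(S_b)$. Second, I would use this to compute the derivative of the coordinate functions along the flow of $\xi$. Since a symplectic basis $a_i, b_i$ of $H_1(S_b)$ can be transported to $H_1(S_{c \cdot b})$ via $\sigma_c$ (the family is locally trivial and $\sigma_c$ is a diffeomorphism onto its image), we obtain
\[
z^i(c \cdot b) = \int_{\sigma_c(a_i)} \theta = \int_{a_i} \sigma_c^* \theta = c \int_{a_i} \theta = c\, z^i(b),
\]
and similarly $w_i(c \cdot b) = c\, w_i(b)$. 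Differentiating at $c = 1$ gives $\xi(z^i) = z^i$ and $\xi(w_i) = w_i$.

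The final step is purely formal: a holomorphic vector field $\xi$ on $B^{\rm reg}$ satisfying $\xi(z^i) = z^i$ in the coordinates $(z^1, \dots, z^{g_S})$ is necessarily $\xi = z^i \frac{\partial}{\partial z^i}$, and likewise $\xi(w_i) = w_i$ forces $\xi = w_i \frac{\partial}{\partial w_i}$ in the $w$-coordinates. (That $\xi$ is holomorphic follows because the $\mathbb{C}^*$-action is holomorphic and preserves $B^{\rm reg}$.) The consistency of the two expressions is then guaranteed by the relation $w_i = \tau_{ij} z^j$ established above, together with the Euler-type homogeneity: applying $z^j \partial_{z^j}$ to $w_i = \tau_{ij} z^j$ and using that $\tau_{ij}$ is homogeneous of degree zero (since $\tau_{ij} = \partial w_i / \partial z^j$ is a ratio of degree-one quantities) recovers $\xi(w_i) = w_i$, confirming both formulas describe the same vector field.

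The main obstacle I anticipate is the careful justification that $S_{c \cdot b} = \sigma_c(S_b)$ together with the transport of cycles, i.e.\ verifying that the scaling map genuinely intertwines the family of spectral curves and acts on the canonical $1$-form by the scalar $c$. Once the homogeneity $\sigma_c^* \theta = c\,\theta$ is in hand, everything reduces to differentiating an integral and reading off the coordinate vector field, which is routine.
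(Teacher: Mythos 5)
Your proposal is correct and follows essentially the same route as the paper: both use that the $\mathbb{C}^*$-action on $B$ is realised by fibrewise scaling $\sigma_c$ on $T^*\Sigma$, which maps $S_b$ isomorphically to $S_{cb}$ and satisfies $\sigma_c^*\theta = c\,\theta$, and both transport the cycles $a_i, b_i$ by this map (agreeing with Gauss--Manin parallel translation for $c$ near $1$) to conclude $z^i(cb) = c\,z^i(b)$ and $w_i(cb) = c\,w_i(b)$, hence $\xi(z^i) = z^i$ and $\xi(w_i) = w_i$. The only difference is cosmetic: your closing consistency check via $w_i = \tau_{ij}z^j$ is extra and not needed, since the direct computation on $b$-cycles already yields the second expression.
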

\begin{proof}
In terms of spectral data, the $\mathbb{C}^*$-action on Higgs bundles corresponds to the natural $\mathbb{C}^*$ scaling action on the fibres of $T^*\Sigma \to \Sigma$. It follows that the $\mathbb{C}^*$-action scales $\theta$ linearly. On the other hand, for any $c \in \mathbb{C}^*$, the action of $c$ on $T^*\Sigma$ induces an isomorphism of spectral curves $c : S_b \to S_{cb}$. Supose that $U$ is an open simply connected subset of $B^{\rm reg}$ containing $b$. Then for all $c \in \mathbb{C}^*$ sufficiently close to $1$, we have $cb \in U$ and the isomorphism $H_1( S_b , \mathbb{R}) \cong H_1( S_{cb} , \mathbb{R})$ induced by multiplication by $c$ clearly agrees with parallel translation by the Gauss-Manin connection. In particular, since $z^i = \int_{a_i} \theta$, each of the coordinate functions $z^i$ must scale linearly with the $\mathbb{C}^*$-action and hence $\xi(z^i) = z^i$ for each $i$. This gives $\xi = z^i \frac{\partial}{\partial z^i}$ as required. The same argument applied to $B$-cycles gives $\xi(w_i) = w_i$, hence $\xi = w_i \frac{\partial}{\partial w_i}$.
\end{proof}

From now on, whenever special K\"ahler coordinates $(z^1 , \dots , z^{g_S})$ are being used we will let $\partial_i$ denote $\frac{\partial}{\partial z^i}$.

\begin{lemma}
Let $(z^1,\dots , z^{g_S})$ be local special K\"ahler coordinates on $B^{\rm reg}$. Then (as cohomology classes) we have:
\[
\nabla_{\partial_i } \theta = \omega_i.
\]
\end{lemma}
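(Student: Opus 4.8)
The plan is to prove the identity at the level of cohomology classes in $H^1(S_b,\mathbb{C})$ by matching periods over a symplectic basis. Since a de Rham class on the compact Riemann surface $S_b$ is uniquely determined by its periods over the symplectic basis $a_1,\dots,a_{g_S},b_1,\dots,b_{g_S}$ of $H_1(S_b,\mathbb{R})$, it suffices to check that $\nabla_{\partial_i}\theta$ and $\omega_i$ have the same periods over each $a_j$ and each $b_j$. (Here $\nabla_{\partial_i}\theta$ is a well-defined element of $(\mathcal{V}_\mathbb{C})_b = H^1(S_b,\mathbb{C})$, being the value on $\partial_i$ of the $\mathcal{V}_\mathbb{C}$-valued $1$-form $\phi = d_\nabla\theta$.)

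First I would compute the periods of the left-hand side. The sections $a_j,b_j$ of the dual local system $\mathcal{V}^*$ are covariantly constant with respect to the dual Gauss--Manin connection over the simply connected neighbourhood $U$. Hence, exactly as in the proof of Proposition \ref{prop:dmu}, the dual pairing commutes with the connection, giving
\[
\langle a_j, \nabla_{\partial_i}\theta\rangle = \partial_i\langle a_j,\theta\rangle = \partial_i\!\int_{a_j}\theta = \partial_i z^j = \delta_i^j,
\]
and likewise
\[
\langle b_j, \nabla_{\partial_i}\theta\rangle = \partial_i\!\int_{b_j}\theta = \partial_i w_j = \tau_{ij},
\]
using the coordinate formulas $z^i=\int_{a_i}\theta$, $w_i=\int_{b_i}\theta$ and the special K\"ahler relation $\partial_i w_j=\tau_{ij}$.

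Next I would compare with the periods of $\omega_i$. By the normalisation (\ref{equ:basisholo}) we have $\int_{a_j}\omega_i=\delta_{ij}=\delta_i^j$, and by the definition $\tau_{ij}=\int_{b_i}\omega_j$ of the period matrix together with its symmetry we get $\int_{b_j}\omega_i=\tau_{ji}=\tau_{ij}$. Thus $\nabla_{\partial_i}\theta$ and $\omega_i$ have identical $a$- and $b$-periods, and therefore represent the same class in $H^1(S_b,\mathbb{C})$, which is the assertion.

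I do not expect any serious obstacle: the content is entirely this period computation, and every ingredient (the covariant constancy of $a_j,b_j$, the coordinate formulas for $z^i,w_i$, the normalisation of $\omega_i$, and the symmetry of $\tau$) is already established. The only point requiring a little care is the identity $\partial_i w_j=\tau_{ij}$. One may invoke the general special K\"ahler relation $\tau_{ij}=\partial w_j/\partial z^i$ directly, or derive it from $w_j=\tau_{jk}z^k$ by differentiating: the extra term $z^k\,\partial_i\tau_{jk}$ vanishes because $\partial_i\tau_{jk}=c_{ijk}$ is totally symmetric and the degree-zero homogeneity of $\tau$ (coming from the $\mathbb{C}^*$-scaling $\xi=z^i\partial_i$ of $\theta$) gives $z^k\partial_k\tau_{ij}=0$.
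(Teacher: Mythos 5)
Your $a$-period computation is correct and is exactly the paper's, but the $b$-period half of your argument is circular. You need $\partial_i w_j=\tau_{ij}$ with $\tau_{ij}$ the \emph{Riemann period matrix} $\int_{b_i}\omega_j$ of the spectral curve (\textsection\ref{sec:skhs}), whereas the ``general special K\"ahler relation'' of \textsection\ref{sec:skg} only defines an abstract matrix $\tau^{SK}_{ij}:=\partial w_j/\partial z^i$; the equality of these two matrices is precisely equivalent to the lemma being proved (pair $\nabla_{\partial_i}\theta=\omega_i$ with $b_j$ to get $\partial_i w_j=\int_{b_j}\omega_i=\tau_{ji}=\tau_{ij}$), and it is established nowhere in the paper before this lemma. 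Your fallback does not escape the circle: differentiating $w_j=\tau_{jk}z^k$ leaves the error term $E_{ij}:=z^k\partial_i\tau_{jk}$, and to kill it you invoke total symmetry of $\partial_i\tau_{jk}$, i.e.\ $\partial_i\tau_{jk}=c_{ijk}$ — but in the paper $c_{ijk}$ is by definition $\partial_i\partial_j w_k=\partial^3\mathcal{F}$, so transferring its total symmetry to the geometric quantity $\partial_i\tau_{jk}=\int_{b_j}\nabla_{\partial_i}\omega_k$ again presupposes the identification $\tau^{SK}=\tau^{Riem}$ (indeed, in the paper the total symmetry of $\partial_i\tau_{jk}$ for the geometric $\tau$ is a \emph{consequence} of this lemma, used later in the Yukawa and residue formulas). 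What is available unconditionally — symmetry of $\partial_i w_j$, symmetry of $\tau_{jk}$, and the homogeneity $z^iz^k\partial_i\tau_{jk}=0$ from the $\mathbb{C}^*$-action — only yields that $E$ is symmetric with $z^iE_{ij}=0$, which does not force $E=0$.

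The missing idea, which is the paper's entire proof, is that $\nabla_{\partial_i}\theta=\phi(\partial_i)$ is not merely a class in $H^1(S_b,\mathbb{C})$: by the construction of $\phi$ in \textsection\ref{sec:complexlag} (Equation (\ref{equ:phi}); see also Lemma \ref{lem:nablachi}) it lies in the subspace $H^0(S_b,K_{S_b})\subset H^1(S_b,\mathbb{C})$, i.e.\ it is represented by a holomorphic $1$-form. A holomorphic $1$-form on a compact Riemann surface is uniquely determined by its $a$-periods alone, so your computation $\int_{a_j}\nabla_{\partial_i}\theta=\partial_i z^j=\delta_{ij}$, compared with the normalisation (\ref{equ:basisholo}), already finishes the proof; no $b$-periods are needed. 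Once the lemma is proved this way, the identity $\partial_i w_j=\int_{b_j}\omega_i=\tau_{ij}$ that you assumed becomes a corollary rather than an input.
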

\begin{proof}
Recall that for any vector field $X$ on $B^{\rm reg}$ we have $\phi(X) = \nabla_X \theta$ where $\phi(X)$ is a $(1,0)$-form. We can determine the $1$-form by integrating against $a$-cycles:
\[
\int_{a_j} \nabla_{\partial_i } \theta = \partial_i \int_{a_j} \theta = \partial_i(z^j) = \delta_{ij}.
\]
Hence $\nabla_{\partial_i } \theta = \omega_i$ as claimed.
\end{proof}

\begin{proposition}
Let $X,Y,Z$ be vector fields on $B^{\rm reg}$. We have:
\begin{eqnarray}
\int_S \nabla_X \theta \wedge \theta &=& 0, \label{equ:d1}\\ 
\int_S \nabla_X \nabla_Y \theta \wedge \theta &=& 0, \label{equ:d2}\\
\int_S \nabla_X \nabla_Y \nabla_Z \theta \wedge \theta &=& c(X,Y,Z) \label{equ:d3},
\end{eqnarray}
where $c(X,Y,Z)$ is the Donagi-Markman cubic.
\end{proposition}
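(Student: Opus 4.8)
The plan is to establish the three identities in order, each deduced from the previous one by a single covariant differentiation, using two structural inputs. The first is that $\nabla_X\theta$ is, for every vector field $X$, a holomorphic $(1,0)$-form on the spectral curve $S$ (this is Hitchin's description $\phi(X)=\nabla_X\theta \in H^0(S,K_S)$, together with the preceding Lemma $\nabla_{\partial_i}\theta=\omega_i$); consequently the wedge of any two of the forms $\theta,\nabla_X\theta,\nabla_Y\theta$ is a $(2,0)$-form on a curve and therefore vanishes identically. The second is that the intersection pairing $\omega(\alpha,\beta)=\int_S\alpha\wedge\beta$ is preserved by the flat Gauss--Manin connection, so that for sections $\alpha,\beta$ of $\mathcal{V}_\mathbb{C}$ one has $\partial_k\int_S\alpha\wedge\beta=\int_S\nabla_{\partial_k}\alpha\wedge\beta+\int_S\alpha\wedge\nabla_{\partial_k}\beta$. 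Throughout I would work with the commuting coordinate fields $\partial_i$ and verify the identities on these; the passage to arbitrary $X,Y,Z$ is addressed at the end.

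For (\ref{equ:d1}), since $\nabla_{\partial_i}\theta=\omega_i$ and $\theta=z^j\omega_j$ is itself holomorphic, the integrand $\omega_i\wedge\theta$ is a $(2,0)$-form and the integral vanishes; equivalently, the Riemann bilinear relations reduce $\int_S\omega_i\wedge\omega_j$ to $\tau_{ij}-\tau_{ji}=0$ by symmetry of the period matrix. For (\ref{equ:d2}), I would differentiate (\ref{equ:d1}) by $\partial_j$ and use flatness of $\omega$, obtaining $0=\int_S\nabla_{\partial_j}\nabla_{\partial_i}\theta\wedge\theta+\int_S\nabla_{\partial_i}\theta\wedge\nabla_{\partial_j}\theta$; the second term is $\int_S\omega_i\wedge\omega_j=0$ for the same $(2,0)$-form reason, which yields (\ref{equ:d2}).

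For (\ref{equ:d3}), differentiating (\ref{equ:d2}) by $\partial_k$ and using flatness once more gives $\int_S\nabla_{\partial_k}\nabla_{\partial_j}\nabla_{\partial_i}\theta\wedge\theta=-\int_S\nabla_{\partial_j}\nabla_{\partial_i}\theta\wedge\nabla_{\partial_k}\theta=-\int_S\nabla_{\partial_j}\omega_i\wedge\omega_k$. I would evaluate the surviving integral by the Riemann bilinear relations: since the cycles $a_l,b_l$ are covariantly constant, the class $\nabla_{\partial_j}\omega_i$ has $a$-periods $\partial_j\delta_{li}=0$ and $b$-periods $\partial_j\tau_{li}$, while $\omega_k$ has $a$-periods $\delta_{lk}$, so the pairing collapses to $-\partial_j\tau_{ki}=-c_{jki}$. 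Hence $\int_S\nabla_{\partial_k}\nabla_{\partial_j}\nabla_{\partial_i}\theta\wedge\theta=c_{jki}=c_{ijk}=c(\partial_i,\partial_j,\partial_k)$, using the total symmetry of the Donagi--Markman cubic. Finally, to obtain the identities for arbitrary vector fields I would observe that the left-hand sides are tensorial in $X,Y,Z$: rescaling a coordinate field by a function $f$ produces correction terms proportional to the lower-order integrals $\int_S\nabla_Z\theta\wedge\theta$ and $\int_S\nabla_Y\nabla_Z\theta\wedge\theta$, which vanish by (\ref{equ:d1}) and (\ref{equ:d2}); multilinearity then promotes the coordinate identities to all $X,Y,Z$.

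The computation itself is short, so there is no serious obstacle; the care lies in the bookkeeping. The essential observation making everything collapse is that $\nabla_X\theta$ is holomorphic, so that both the first-derivative wedge and, after one differentiation, the cross term are integrals of $(2,0)$-forms and vanish. The points to watch are the orientation and sign in the Riemann bilinear relations against the convention $\tau_{ij}=\int_{b_i}\omega_j$, and the index order when identifying the residual $b$-period $\partial_j\tau_{li}$ with $c$ and matching it to the total symmetry of the cubic. I expect the most delicate conceptual step to be the final tensoriality argument, which is precisely what lets the coordinate-field computation deliver the basis-independent statement, and which relies on the vanishing already proved in (\ref{equ:d1}) and (\ref{equ:d2}).
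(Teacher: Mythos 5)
Your proof is correct. For the first two identities it coincides with the paper's argument: $\nabla_X\theta$ is represented by a holomorphic $1$-form, so the relevant wedge products are $(2,0)$-forms on a curve and vanish identically, and differentiating the first identity using flatness of the intersection pairing under the Gauss--Manin connection yields the second. For the third identity, however, you take a genuinely different route. The paper applies the Riemann bilinear relations \emph{directly} to $\int_S \nabla_{\partial_i}\nabla_{\partial_j}\nabla_{\partial_k}\theta\wedge\theta$, obtaining $-\sum_l z^l\,\partial_i\partial_j\tau_{kl}$, and then closes the computation with the Hitchin-specific homogeneity coming from the $\mathbb{C}^*$-action: with the Euler field $\xi=z^l\partial_l$ it uses $\xi(w_i)=w_i$ and $[\xi,\partial_i]=-\partial_i$ to conclude $-\xi\,\partial_j\partial_k w_i=\partial_j\tau_{ki}=c_{ijk}$. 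You instead integrate by parts once more, differentiating the second identity to get $\int_S\nabla_{\partial_k}\nabla_{\partial_j}\nabla_{\partial_i}\theta\wedge\theta=-\int_S\nabla_{\partial_j}\omega_i\wedge\omega_k$, and evaluate this pairing by the bilinear relations, where the vanishing $a$-periods of $\nabla_{\partial_j}\omega_i$ collapse the sum to $\partial_j\tau_{ki}=c_{jki}=c_{ijk}$; your signs are consistent with the paper's convention $\int_S\alpha\wedge\beta=\sum_l\bigl(\int_{a_l}\alpha\int_{b_l}\beta-\int_{b_l}\alpha\int_{a_l}\beta\bigr)$. In effect you prove the identity $c(X,Y,Z)=-\int_S\nabla_Y\nabla_Z\theta\wedge\nabla_X\theta$ which the paper only derives afterwards, at the start of its proof of the residue formula for the cubic. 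What your route buys is independence from the special relation $w_i=\tau_{ij}z^j$ (which, as the paper itself notes, fails for general special K\"ahler manifolds), so your argument uses only flatness of the pairing and the normalisation of periods; what the paper's route exhibits is the conic/homogeneous structure of the Hitchin base. Your closing tensoriality argument---correction terms under rescalings such as $Y\mapsto fY$ are lower-order integrals killed by the first two identities, which also disposes of the ordering of the covariant derivatives for non-commuting fields---is sound and corresponds to the paper's brief remark that the left-hand side is a symmetric cubic tensor because of the second identity.
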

\begin{remark}
The expression $\int_S \nabla_X \nabla_Y \nabla_Z \theta \wedge \theta$ is the analogue of Yukawa couplings for moduli spaces of Calabi-Yau $3$-folds.
\end{remark}
\begin{proof}
Recall as above that $\phi(X) = \nabla_X \theta$ is a $(1,0)$-form. Thus $\nabla_X \theta \wedge \theta= 0$, since it is a $(2,0)$-form on $S$. This proves (\ref{equ:d1}). Applying $\nabla_Y$ to (\ref{equ:d1}), we get:
\[
0 = Y \left( \int_S \nabla_X \theta \wedge \theta \right) = \int_S \nabla_Y \nabla_X \theta \wedge \theta + \int_S \nabla_X \theta \wedge \nabla_Y \theta.
\]
But $\int_S \nabla_X \theta \wedge \nabla_Y \theta = 0$, as $\nabla_X\theta $ and $\nabla_Y \theta$ are both $(1,0)$-forms, so we get (\ref{equ:d2}). Observe now that the left hand side of (\ref{equ:d3}) is a symmetric cubic tensor because of (\ref{equ:d2}). Therefore it suffices to consider the case where $X = \partial_i$, $Y = \partial_j$, $Z = \partial_k$. In this case we get
\begin{equation*}
\begin{aligned}
\int_S  \nabla_{\partial_i} \nabla_{\partial_j} \nabla_{\partial_k} \theta \wedge \theta &= \sum_{l = 1}^{g_S} \left( \int_{a_l} \nabla_{\partial_i} \nabla_{\partial_j} \nabla_{\partial_k} \theta \int_{b_l} \theta  - \int_{b_l} \nabla_{\partial_i} \nabla_{\partial_j} \nabla_{\partial_k} \theta \int_{a_l} \theta  \right) \\
&= \sum_{l=1}^{g_S} \left( w_l \partial_i \partial_j \partial_k \int_{a_l} \theta - z^l \partial_i \partial_j \partial_k \int_{b_l} \theta \right) \\
&= \sum_{l=1}^{g_S} \left( w_l \partial_i \partial_j \partial_k z^l  - z^l \partial_i \partial_j \partial_k w_l \right)\\
&= -\sum_{l=1}^{g_S} z^l \partial_i \partial_j \tau_{kl} \\
&= -\sum_{l=1}^{g_S} z^l \partial_l \partial_j \tau_{ki} \quad \text{( by symmetry of }  \partial_j \tau_{kl}) \\
&= -\xi \partial_j \partial_k w_i .
\end{aligned}
\end{equation*}
Then using the commutation relation $[\xi , \partial_i] = -\partial_i$ and the fact that $\xi(w_i) = w_i$, we see that $-\xi \partial_j \partial_k w_i = \partial_j \partial_k w_i = \partial_j \tau_{ki} = c_{jki} = c_{ijk}$.
\end{proof}

\begin{proposition}
A prepotential for the special K\"aher structure on $B^{\rm reg}$ is given by
\[
\mathcal{F} = \frac{1}{2} z^i w_i = \frac{1}{2} \tau_{ij} z^i z^j.
\]
\end{proposition}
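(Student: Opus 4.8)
The plan is to verify directly that $\mathcal{F} = \tfrac{1}{2} z^i w_i$ satisfies the defining property of a prepotential, namely $\partial_k \mathcal{F} = w_k$. First I would record that the two displayed expressions for $\mathcal{F}$ coincide: since the special relation $w_i = \tau_{ij} z^j$ holds in the Hitchin setting (established above from $\theta = z^i \omega_i$), we have $\tfrac{1}{2} z^i w_i = \tfrac{1}{2} \tau_{ij} z^i z^j$, and by symmetry of $\tau_{ij}$ this is a genuine quadratic form in the $z$-coordinates.

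Next I would differentiate $\mathcal{F} = \tfrac{1}{2} \tau_{ij} z^i z^j$ with respect to $z^k$. Using $c_{kij} = \partial_k \tau_{ij}$ and the symmetry $\tau_{kj} = \tau_{jk}$, the two ``boundary'' terms combine to $\tau_{kj} z^j = w_k$, leaving
\[
\partial_k \mathcal{F} = w_k + \tfrac{1}{2} c_{kij} z^i z^j.
\]
Thus everything reduces to showing that the correction term $c_{kij} z^i z^j$ vanishes; equivalently, that the Donagi--Markman cubic is annihilated upon contraction with the Euler vector field $\xi = z^i \partial_i$, i.e. $c_{kij} z^j = 0$.

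To establish this key identity I would differentiate the special relation $w_i = \tau_{ij} z^j$ with respect to $z^k$, obtaining $\partial_k w_i = c_{kij} z^j + \tau_{ik}$. On the other hand the general special K\"ahler relation $\tau_{ij} = \partial_i w_j$, combined with symmetry of $\tau$, gives $\partial_k w_i = \tau_{ki} = \tau_{ik}$. Comparing the two expressions for $\partial_k w_i$ and cancelling $\tau_{ik}$ yields $c_{kij} z^j = 0$, whence $c_{kij} z^i z^j = z^i(c_{kij}z^j) = 0$. Feeding this back gives $\partial_k \mathcal{F} = w_k$, so $\mathcal{F}$ is a prepotential. (Alternatively, the vanishing $c_{kij} z^j = 0$ may be read as a homogeneity statement: the period matrix $\tau_{ij}$ is $\mathbb{C}^*$-invariant because the scaling $b \mapsto cb$ induces an isomorphism $S_b \cong S_{cb}$ of spectral curves compatible with Gauss--Manin parallel transport, so $\xi(\tau_{ij}) = z^k c_{kij} = 0$.)

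I expect the contraction identity $c_{kij} z^j = 0$ to be the only substantive point; it is precisely the conical/homogeneity feature special to the Hitchin base that makes the naive quadratic $\mathcal{F}$ work, whereas for a general special K\"ahler manifold no such closed-form prepotential exists. The remaining manipulations are routine applications of the product rule together with the symmetries of $\tau_{ij}$ and $c_{ijk}$ already recorded.
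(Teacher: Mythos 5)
Your proof is correct, and it rests on exactly the same two ingredients as the paper's: the general special K\"ahler relation $\partial_k w_i = \tau_{ki}$ and the Hitchin-specific relation $w_i = \tau_{ij}z^j$ (a consequence of $\theta = z^i\omega_i$). The difference is one of packaging. The paper differentiates the form $\mathcal{F} = \tfrac{1}{2}z^iw_i$ directly,
\[
\partial_i \mathcal{F} = \tfrac{1}{2}w_i + \tfrac{1}{2}z^j\,\partial_i w_j = \tfrac{1}{2}w_i + \tfrac{1}{2}z^j\tau_{ij} = w_i,
\]
a two-step computation that never mentions the cubic. You instead differentiate the quadratic form $\tfrac{1}{2}\tau_{ij}z^iz^j$, which produces the correction term $\tfrac{1}{2}c_{kij}z^iz^j$ and obliges you to prove the Euler-type identity $c_{kij}z^j = 0$; but your derivation of that identity --- differentiate $w_i = \tau_{ij}z^j$ and compare with $\partial_k w_i = \tau_{ik}$ --- is precisely the paper's one-line computation in disguise, so the two arguments are logically equivalent. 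What your reorganization buys is that it isolates the conical feature explicitly: $c_{kij}z^j = \xi(\tau_{ij}) = 0$ says the period matrix is invariant under the $\mathbb{C}^*$-flow, which is consistent with, and could alternatively be deduced from, the paper's proposition that $\xi = z^i\tfrac{\partial}{\partial z^i} = w_i\tfrac{\partial}{\partial w_i}$ (since $\xi(w_j) = w_j$ and $[\xi,\partial_i] = -\partial_i$ give $\xi(\partial_i w_j) = 0$). Your parenthetical geometric justification via the isomorphism $S_b \cong S_{cb}$ compatible with Gauss--Manin transport is likewise sound and matches the paper's proof of that proposition, and your closing observation --- that the identity $c_{kij}z^j = 0$ is exactly what fails for a general special K\"ahler manifold --- correctly pinpoints why this closed-form prepotential is special to the Hitchin base, echoing the paper's remark that $w_i = \tau_{ij}z^j$ does not hold in general.
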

\begin{proof}
This is a simple calculation:
\[
\partial_i \mathcal{F} = \frac{1}{2}w_i + \frac{1}{2} z^j \partial_i w_j = \frac{1}{2}w_i + \frac{1}{2}z^j \tau_{ij} = \frac{1}{2}w_i + \frac{1}{2}w_i = w_i.
\]
\end{proof}

\section{Donagi-Markman cubic}\label{sec:dmcubic}

The Donagi-Markman cubic measures the change in periods of the spectral curve $S_b$ as $b$ varies. Since the periods are essentially given by the Hodge structure on $H^1(S_b , \mathbb{C})$, this amounts to computing the variation of Hodge structure, which is controlled by the Kodaira-Spencer class of the deformation of $S_b$. Therefore, we need to compute the Kodaira-Spencer class $\kappa(\partial) \in H^1(S_b , T_{S_b})$ associated to a tangent vector $\partial \in T_b B^{\rm reg}$.

\subsection{Kodaira-Spencer class computation}
Recall the characteristic map $\chi : T_b B^{\rm reg} \to H^0(S_b , N_{S_b})$ in (\ref{equ:charmap}) which sends a tangent vector in $T_b B^{\rm reg}$ to the corresponding normal vector field on $S_b$. Let us write $V = \chi(\partial)$ for the normal vector field corresponding to $\partial$. Next, recall the universal moduli spaces of spectral curves:
\[
Z^{\rm reg} = \{ (x,b) \in T^*\Sigma \times B^{\rm reg} \; | \; p_b(x) = 0\}.
\]
Let $q : Z^{\rm reg} \to B^{\rm reg}$ be the natural projection and $j : Z^{\rm reg} \to T^*\Sigma$ the map sending a spectral curve to its image in $T^*\Sigma$. We denote the composition of $j$ with the natural projection $T^*\Sigma \to \Sigma$ simply as $\pi : Z^{\rm reg} \to \Sigma$. For any $b \in B$ we can find an open neighbourhood $b \in U \subset B^{\rm reg}$ over which the family $Z^{\rm reg}$ can be differentiably trivialised:
\[
Z^{\rm reg}|_U = q^{-1}(U) \cong U \times S,
\]
where $S = S_b$. With respect to this trivialisation, the family $Z^{\rm reg}|_U$ consists of a fixed topological surface $S$ with a complex structure $I(t)$ that varies with $t \in U$. The map $\pi : Z^{\rm reg}|_U \to \Sigma$ corresponds to a family of maps $\pi_t : S \to \Sigma$ such that $\pi_t$ is holomorphic with respect to $I(t)$. Given a tangent vector $\partial \in T_bU$, we use a dot to denote differentiation by $\partial$. Differentiating the condition that $\pi_t$ is holomorphic with respect to $I(t)$, we get
\begin{equation}\label{equ:kap1}
\pi_*( \kappa(\partial) ) = \overline{\partial} Y,
\end{equation}
where $\kappa(\partial) = -\frac{i}{2}\dot{I}$ is the Kodaira-Spencer class of the deformation of $I$ in the direction $\partial$ and $Y = \dot{\pi} \in \Omega^{0}(S , \pi^*(T_\Sigma) )$ is the corresponding deformation of the map $\pi$. If $a \in S$ is a ramification point of $\pi$ then $Y(a) \in ( T_\Sigma)_{\pi(a)}$ is a tangent vector describing the motion of the branch point $\pi(a) \in \Sigma$. Let $D$ be the ramification divisor of $\pi$, that is $D = \sum_a (r(a)-1)a$ where the sum is over ramification points of $\pi$ and $r(a)$ is the ramification degree of $\pi$ at $a$. View $\pi_*$ as a section of $T_S^{-1} \otimes \pi^*(T_\Sigma)$ and define
\[
W = \frac{Y}{\pi_*} \in \Omega^{0}( S , T_S(D) ),
\]
which is a vector field on $S$ having poles at the ramification points. Then Equation (\ref{equ:kap1}) says
\[
\kappa(\partial) = \overline{\partial} W.
\]
Note that $W$ depends on $\partial$ and we will write $W(\partial)$ if we wish to show this dependence. Note also that $W$ depends on the choice of local differentiable trivialisation of the family of spectral curves.

\begin{definition}\label{def:delta}
Let $\partial$ be a $(1,0)$-vector field on $B^{\rm reg}$. We denote by $\delta$ the unique vector field on $Z^{\rm reg} \setminus D$ which is a lift of $\partial$ (that is, $q_*(\delta) = \partial$), satisfying $\pi_* (\delta ) = 0$ (i.e., $\pi$ is constant along the integral curves of $\delta$). In a local differentiable trivialisation $Z^{\rm reg}|_U \cong U \times S$, $\delta$ is given by:
\begin{equation}\label{equ:liftingvf}
\delta = \partial - W(\partial) \in \Omega^0( Z^{\rm reg} , TZ^{\rm reg}(D) ).
\end{equation}
Note that $\delta$ aquires poles along the ramification divisor $D$ and so it may be regarded as a section of $TZ^{\rm reg}(D)$.
\end{definition}

We use $\delta$ to differentiate objects on the family $Z^{\rm reg}$ in a trivialisation-independent manner. The only drawback of this is that differentiation with respect to $\delta$ produces poles at the ramification points.
\begin{lemma}\label{lem:deltatheta}
We have $\delta \theta = i_V d\theta |_S$, where $V = \chi(\partial)$ is the normal vector field corresponding to $\partial \in TB^{\rm reg}$.
\end{lemma}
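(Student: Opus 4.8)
The plan is to interpret $\delta\theta$ as the Lie derivative $\mathcal{L}_\delta(j^*\theta)$ restricted to the fibre $S=S_b$, where $j^*\theta$ is the pullback to $Z^{\rm reg}$ of the canonical $1$-form on $T^*\Sigma$, and to evaluate it using Cartan's formula
\[
\mathcal{L}_\delta(j^*\theta) = i_\delta\, d(j^*\theta) + d\bigl(i_\delta\, j^*\theta\bigr) = i_\delta\, j^*(d\theta) + d\bigl(i_\delta\, j^*\theta\bigr).
\]
The whole proof then rests on two observations: that the second (exact) term vanishes identically, and that the first term restricts on $S$ to $i_V\,d\theta|_S$. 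The defining property $\pi_*(\delta)=0$ of the lift $\delta$ is engineered precisely to make both of these work.

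For the exact term, I would use the fact that the tautological $1$-form $\theta$ on $T^*\Sigma$ annihilates vertical vectors: at a point $\xi\in T^*_x\Sigma$ one has $\theta_\xi(w)=\xi(\pi_* w)$, so $\theta(w)=0$ whenever $w$ is tangent to the fibres of $T^*\Sigma\to\Sigma$. Since $\pi_*(\delta)=0$ as vector fields on $Z^{\rm reg}$, the pushforward $j_*\delta$ lies in the kernel of the projection $T^*\Sigma\to\Sigma$, i.e.\ is vertical; hence $i_\delta\,j^*\theta = \theta(j_*\delta)=0$ on all of $Z^{\rm reg}\setminus D$ (the vertical pole of $\delta$ along $D$ does not affect this, as $\theta$ kills vertical vectors regardless of their magnitude). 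Consequently $d(i_\delta\,j^*\theta)=0$ and the exact term drops out.

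For the remaining term, restricting to $S$ and pairing with a vector $T$ tangent to $S$ gives $\bigl(i_\delta\, j^*(d\theta)\bigr)(T) = d\theta(j_*\delta,\,j_*T)$. Here $j_*\delta$ is the vertical velocity of a point moving along the deforming spectral curve with its $\Sigma$-coordinate held fixed, so by the computation in Proposition \ref{prop:deform} its image in $N_S$ is exactly $V=\chi(\partial)$. Thus $j_*\delta$ is a lift of $V$ to $T(T^*\Sigma)|_S$, differing from any other lift by a vector field tangent to $S$. Since $S$ is Lagrangian, $d\theta|_S=0$, so $i_T\,d\theta|_S=0$ for $T$ tangent to $S$; this shows $i_{j_*\delta}\,d\theta|_S = i_V\,d\theta|_S = \varphi(V)$, independent of the chosen lift. (This is also why the pole of $\delta$ along $D$, which lies in the tangent direction, is annihilated, so the answer is the holomorphic $1$-form $\varphi(V)$ with no poles.) Combining the two terms yields $\delta\theta = i_V\,d\theta|_S$, as claimed.

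I expect the main obstacle to be the bookkeeping around the poles of $\delta$ along the ramification divisor $D$: one must check that the vanishing $i_\delta\,j^*\theta=0$ and the identity $i_{j_*\delta}\,d\theta|_S=\varphi(V)$ both survive these poles, the former because $\theta$ annihilates vertical vectors of any size and the latter because the Lagrangian condition kills the tangential (pole-bearing) part of $j_*\delta$. The conceptual point worth emphasising is that the specific trivialisation-independent lift $\delta$, rather than the naive horizontal lift $\partial$, is exactly what is needed to promote the cohomological identity $\nabla_\partial\theta = \varphi(V)$ to an equality of honest $1$-forms.
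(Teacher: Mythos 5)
Your proof is correct and takes essentially the same route as the paper's: the paper likewise sets $\hat{V} = j_*(\delta)$, notes $\pi_*\hat{V} = 0$ and that $\hat{V}$ lifts $V = \chi(\partial)$, applies Cartan's formula $\delta\theta = \mathcal{L}_{\hat{V}}\theta|_S = i_{\hat{V}}d\theta + d(i_{\hat{V}}\theta)|_S$, kills the exact term, and uses the Lagrangian condition to replace the lift $\hat{V}$ by $V$. The only (cosmetic) difference is your disposal of the exact term by observing that $\theta$ annihilates vertical vectors identically, which is in fact slightly more direct than the paper's argument that $i_{\hat{V}}\theta$ is a holomorphic function and hence constant on the compact curve $S$.
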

\begin{proof}
Choose a local differentiable trivialisation of the family of spectral curves: $Z^{\rm reg}|_U \cong U \times S$. Define $\tilde{V} = j_*(\partial) \in \Omega^0( S , T_{T^*\Sigma} )$ and $\hat{V} = j_*(\delta) \in \Omega^0( S , T_{T^* \Sigma}(D))$. By this definition, $\tilde{V}$ and $\hat{V}$ are lifts of $V = \chi(\partial)$. Next, we note that since $\delta = \partial - W$ preserves $\pi \circ j : Z^{\rm reg}|_U \to \Sigma$, we have that $j_*(\delta)$ preserves $\pi$ in the sense that $\pi_* j_* \delta = 0$. Thus, $\pi_* \hat{V} = 0$. Observing that $j_* : T(U \times S) \to T_{T^*\Sigma}$ restricted to $T_S$ acts as the identity $j_* : T_S \to T_S$, we find $\hat{V} = j_*(\delta) = j_*(\partial - W) = \tilde{V} - W$. We see that $\hat{V}$ is a lift of $V$ to a section of $T_{T^*\Sigma}(D)$ which is required to satisfy $\pi_* \hat{V} = 0$. Applying $\overline{\partial}$ to $\pi_* \tilde{V} = \pi_* W$, we get $\pi_*( \overline{\partial} \tilde{V} ) = \pi_* (\overline{\partial} W) = \pi_* \kappa$, but $\pi_* : T_S \to \pi^* T_\Sigma$ is generically an isomorphism, so we deduce that $\kappa = \overline{\partial} W = \overline{\partial} \tilde{V}$ and $\hat{V}$ is meromorphic. The identity $j_* (\partial) = \tilde{V}$ means that our local differentiable trivialisation of the family of spectral curves is given by integrating the flow lines of $\tilde{V}$. This means that in our given trivialisation, the change in $\theta$ is $\partial \theta = \mathcal{L}_{\tilde{V}} \theta |_S$. Then:
\[
\delta \theta = \partial \theta - \mathcal{L}_W \theta = \mathcal{L}_{\tilde{V} - W}(\theta)|_S = \mathcal{L}_{ \hat{V} } \theta |_S = i_{\hat{V}} d\theta + d( i_{\hat{V}} \theta) |_S.
\]
But $\theta$ vanishes at the ramification points with order at least that of $\pi_*$, so $i_{\hat{V}}\theta $ is a holomorphic function, hence constant on $S$. Therefore $d(i_{\hat{V}} \theta )|_S = 0$ and $\delta \theta = i_{\hat{V}} d\theta |_S$. But $\hat{V}$ is a lift of $V$, so $i_{\hat{V}} d\theta |_S = i_{\tilde{V}} d\theta |_S$.
\end{proof}

Consider the natural $\mathbb{C}^*$-action on $T^*\Sigma$ given by scalar multiplication in the fibres. Let $\xi \in H^0( T^*\Sigma , T_{T^*\Sigma} )$ be the vector field generating this action. In local coordinates $(x,y)$, where $x$ is a local coordinate on $\Sigma$ and $(x,y)$ corresponds to the point $(x , ydx) \in T^*\Sigma$, we have $\xi = y \frac{\partial}{\partial y}$. Note that in these coordinates the canonical $1$-form is given by $\theta = y dx$, hence $\theta(\xi) = 0$, $i_\xi d\theta = \theta$.

\begin{lemma}\label{lem:hatv2}
Let $V , \hat{V}$ be as in Lemma \ref{lem:deltatheta}. Then $\hat{V}$ is given by:
\begin{equation*}
\hat{V} = \frac{ \alpha }{\theta} \xi,
\end{equation*}
where $\alpha = i_V d\theta |_{S_b}$.
\end{lemma}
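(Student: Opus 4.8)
The plan is to exploit two facts about $\hat V$ that are already in hand: it is \emph{vertical} for the projection $\pi : T^*\Sigma \to \Sigma$, i.e. $\pi_* \hat V = 0$ (derived in the proof of Lemma~\ref{lem:deltatheta}), and the Euler vector field $\xi = y\,\partial_y$ generating the fibrewise scaling satisfies $i_\xi d\theta = \theta$ (recorded just before the statement). Since $\pi$ has one-dimensional fibres, its vertical tangent distribution $\ker(\pi_*)$ is a line bundle, spanned away from the zero section by $\xi$. As $\hat V$ is a section of this line (allowing poles along $D$), I can write
\[
\hat V = c\,\xi
\]
for a unique meromorphic function $c$ on $S$; the whole content of the lemma is the identification $c = \alpha/\theta$.

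First I would determine $c$ by contracting both sides with $d\theta$. On one side, Lemma~\ref{lem:deltatheta} gives $\delta\theta = i_{\hat V} d\theta|_S = i_V d\theta|_S = \alpha$. On the other, linearity of contraction together with $i_\xi d\theta = \theta$ yields
\[
i_{\hat V} d\theta = i_{c\xi} d\theta = c\,(i_\xi d\theta) = c\,\theta .
\]
Restricting to $S$ and comparing, $c\,\theta|_S = \alpha$, so $c = \alpha/\theta$; here $\alpha$ and $\theta|_S$ are both $1$-forms on $S$, so their quotient is a genuine meromorphic function. This gives $\hat V = (\alpha/\theta)\,\xi$.

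The only step that needs care is the reduction $\hat V = c\,\xi$, which relies essentially on the one-dimensionality of the fibres of $T^*\Sigma \to \Sigma$ and holds verbatim away from the zero section where $\xi \neq 0$. Near the zero section $\theta = y\,dx$ and $\xi$ vanish together, but $\alpha/\theta$ remains meromorphic, so the formula extends by the identity theorem. The poles of $\hat V$ along the ramification divisor $D$ match the poles of $\alpha/\theta$ at the ramification points, where $\theta|_S$ vanishes because $dx = 2q\,dq$ for the local coordinate $x = q^2$; this is consistent with $\hat V$ being a section of $T_{T^*\Sigma}(D)$. I do not expect a substantive obstacle: once verticality and the contraction identity $i_\xi d\theta = \theta$ are granted, the lemma is essentially a one-line computation.
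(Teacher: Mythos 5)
Your proof is correct and in substance identical to the paper's: both arguments rest on the verticality $\pi_*\hat{V} = 0$, the identity $i_\xi d\theta = \theta$, and the fact (established in the proof of Lemma \ref{lem:deltatheta}) that $i_{\hat{V}} d\theta|_{S_b} = i_V d\theta|_{S_b} = \alpha$. The only difference is direction of presentation — the paper posits the candidate $V^* = \frac{\alpha}{\theta}\xi$ and verifies it satisfies the conditions uniquely characterising $\hat{V}$ (including the pole bound along $D$), whereas you solve for the coefficient $c$ in $\hat{V} = c\,\xi$ directly, which is the same one-line contraction computation run in reverse.
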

\begin{proof}
Recall that $\hat{V}$ is a lift of $V$ satisfying $\pi_*(\hat{V}) = 0$. These conditions uniquely determine $\hat{V}$, because $Ker(\pi_*) \cap T_S$ is generically zero. Let $V^* = \frac{\alpha}{\theta} \xi$. By uniqueness, it is enough to check that $V^*$ satisfies the necessary requirements, i.e. $V^*$ is a section of $T_{T^*\Sigma}(D)$ which is a lift of $V$ and satisfies $\pi_* V^* = 0$. Clearly $\pi_* V^* = 0$, because $\xi \in Ker(\pi_*)$. We need to check that $V^*$ is a section of $T_{T^*\Sigma}(D)$. Let $(x,y)$ be local coordinates on $T^*\Sigma$ as above and let $q$ be a local coordinate on $S_b$. Then $\pi$ is the map $\pi(q) = x = x(q)$ and with respect to these coordinates $d\pi = \frac{dx}{dq}$. The $1$-form $\alpha$ has the form $\alpha(q) dq$ for some holomorphic function $\alpha(q)$. Then:
\begin{equation*}
\begin{aligned}
V^* = \frac{\alpha}{\theta} \xi &= \frac{ \alpha(q) dq }{ ydx } y \frac{\partial}{\partial y}\\
&= \frac{ \alpha(q) dq }{ dx } \frac{\partial}{\partial y} \\
&= \frac{ \alpha(q) }  { (\frac{dx}{dq}) } \frac{\partial}{\partial y} = \frac{ \alpha(q) }  { d\pi } \frac{\partial}{\partial y}.
\end{aligned}
\end{equation*}
So the polar divisor of $V^*$ is at most $D$. To show that $V^*$ projects to $V$, we just need to show that
\[
i_{V^*} d\theta = i_V d\theta = \alpha,
\]
where the second equality is the definition of $\alpha$. This follows easily from $i_\xi d\theta = \theta$, indeed:
\[
i_{V^*} d\theta = \frac{\alpha}{\theta} i_{\xi} d\theta = \frac{\alpha}{\theta} \theta = \alpha.
\]
Hence $V^* = \hat{V}$.
\end{proof}

The following result relates the characteristic map to the Gauss-Manin connection:
\begin{lemma}\label{lem:nablachi}
For any $\partial \in T_b B^{\rm reg}$, we have:
\[
\nabla_\partial \theta = \delta \theta = i_{\chi(\partial)} d\theta |_{S_b}.
\]
More precisely, the right hand side is the unique holomorphic $1$-form representing the cohomology class $\nabla_\partial \theta \in H^1( S_b , \mathbb{C})$.
\end{lemma}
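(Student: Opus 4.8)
The plan is to prove the two asserted equalities separately. The second equality, $\delta\theta = i_{\chi(\partial)}d\theta|_{S_b}$, is precisely the content of Lemma \ref{lem:deltatheta}, so no new argument is needed there. The substance of the lemma is therefore to show that the Gauss-Manin covariant derivative $\nabla_\partial\theta \in H^1(S_b,\mathbb{C})$ is represented by the holomorphic $1$-form $\delta\theta$, and that this is the \emph{unique} holomorphic representative of the class.

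First I would recall how $\nabla$ is computed in a local differentiable trivialisation $Z^{\rm reg}|_U \cong U \times S$. Such a trivialisation identifies the cohomologies $H^1(S_t,\mathbb{C})$ of nearby fibres with the fixed space $H^1(S,\mathbb{C})$, and Gauss-Manin parallel transport is exactly this smooth identification; hence $\nabla_\partial\theta$ is represented by the naive derivative of the family of closed $1$-forms $\theta|_{S_t}$ in the trivialisation. As established in the proof of Lemma \ref{lem:deltatheta}, this naive derivative equals $\partial\theta = \mathcal{L}_{\tilde V}\theta|_S$, where $\tilde V = j_*(\partial)$.

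Next I would compare this representative with $\delta\theta$. By Definition \ref{def:delta} we have $\delta = \partial - W(\partial)$, so $\delta\theta = \partial\theta - \mathcal{L}_W\theta|_S$ and therefore $\nabla_\partial\theta - \delta\theta = \mathcal{L}_W\theta|_S$ as $1$-forms on $S$. Since $W$ is tangent to $S$ and the restriction $\theta|_S$ is a holomorphic, hence closed, $1$-form on the Riemann surface $S$, Cartan's formula gives $\mathcal{L}_W\theta|_S = d(i_W\theta|_S) + i_W\, d(\theta|_S) = d(i_W\theta|_S)$, which is exact. Thus $\nabla_\partial\theta$ and $\delta\theta$ define the same class in $H^1(S_b,\mathbb{C})$.

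Finally, for the ``unique holomorphic representative'' claim I would observe that $\delta\theta = i_{\chi(\partial)}d\theta|_{S_b} = \varphi(\chi(\partial))$ lies in $H^0(S_b,K_{S_b})$, because $\varphi : N_{S_b} \to K_{S_b}$ is a holomorphic isomorphism and $\chi(\partial)$ is a holomorphic section of the normal bundle; so $\delta\theta$ is holomorphic. Uniqueness is then automatic: on a compact Riemann surface a holomorphic $1$-form $\eta$ with vanishing cohomology class is exact, $\eta = df$, and comparing types forces $f$ to be holomorphic, hence constant, so $\eta = 0$; equivalently $H^0(S_b,K_{S_b}) \to H^1(S_b,\mathbb{C})$ is injective. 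I expect the main obstacle to be the clean identification of the Gauss-Manin derivative with $\mathcal{L}_{\tilde V}\theta|_S$ in the chosen trivialisation, together with the book-keeping needed to see that replacing the lift $\tilde V$ by the pole-acquiring lift $\hat V = j_*(\delta)$ alters the representative only by the exact form $d(i_W\theta|_S)$; once this is in hand, holomorphicity and uniqueness are routine.
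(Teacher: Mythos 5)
Your proposal is correct, but for the essential equality $\nabla_\partial \theta = \delta\theta$ (at the level of cohomology) it takes a genuinely different route from the paper. The paper deduces this from the general moduli-of-complex-Lagrangians machinery: by Equation (\ref{equ:phi}) the $\mathcal{V}_{\mathbb{C}}$-valued $1$-form satisfies $\phi(\partial) = i_{\chi(\partial)} d\theta|_{S_b}$, and Proposition \ref{prop:dmu}, applied with $\mathcal{M} = T^*\Sigma$, $\Omega = d\theta$, $\mu = \theta$, gives $\phi = d_\nabla \theta$; combining with Lemma \ref{lem:deltatheta} finishes the proof. You instead argue directly in a differentiable trivialisation: the Gauss-Manin derivative is represented by the naive derivative $\partial\theta = \mathcal{L}_{\tilde{V}}\theta|_S$, and the discrepancy $\partial\theta - \delta\theta = \mathcal{L}_W \theta|_S = d( i_W \theta|_S )$ is exact. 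In effect you re-prove the special case of Proposition \ref{prop:dmu} that is needed, at the level of forms rather than by pairing against covariantly constant cycles as the paper does; your version is more self-contained and makes explicit why passing from the smooth lift $\tilde{V}$ to the pole-bearing lift $\hat{V}$ changes the representative only by an exact term. One detail you flag but should nail down: $W$ is only a section of $T_S(D)$, so for $d(i_W\theta|_S)$ to be exact you need $i_W\theta|_S$ to be a globally defined smooth function, i.e.\ to extend across the ramification points; this holds because $\theta|_S$ vanishes there to order at least that of $\pi_*$ (as recorded in the proof of Lemma \ref{lem:deltatheta}), which offsets the poles of $W$, and then $\mathcal{L}_W\theta|_S = d(i_W\theta|_S)$ holds everywhere by continuity. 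Your holomorphicity argument ($\delta\theta = \varphi(\chi(\partial)) \in H^0(S_b , K_{S_b})$ since $\chi(\partial)$ is a holomorphic section of $N_{S_b}$) and your uniqueness argument (injectivity of $H^0(S_b , K_{S_b}) \to H^1(S_b , \mathbb{C})$ on a compact Riemann surface) are both fine; the paper treats the former as clear and leaves the latter implicit.
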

\begin{proof}
The second equality is Lemma \ref{lem:deltatheta}. Moreover the right hand side is clearly holomorphic. Thus it remains only to show that $\nabla_\partial \theta = i_{\chi(\partial)} d\theta |_{S_b}$, at the level of cohomology classes. For this we view $B^{\rm reg}$ as a moduli space of spectral curves, which are complex Lagrangians in the symplectic surface $(T^*\Sigma , d\theta)$. The $\mathcal{V}_\mathbb{C}$-valued $1$-form $\phi$ is essentially by definition given by $\phi(X) = i_{\chi(X)} d\theta |_{S_b}$ (this is just a special case of Equation (\ref{equ:phi})). On the other hand $\phi = d_\nabla \theta$, and the lemma follows.
\end{proof}

\begin{remark}\label{rem:vhat}
By Lemma \ref{lem:nablachi}, we have $\alpha = \nabla_\partial \theta$ and so $\hat{V}$ can be written as
\[
\hat{V} = \frac{\nabla_\partial \theta}{\theta} \xi = \frac{\delta \theta}{\theta} \xi,
\]
where it is understood that by $\nabla_\partial \theta$, we mean the holomorphic $1$-form representing $\nabla_\partial \theta$ in cohomology.
\end{remark}

\subsection{Residue formula for the Donagi-Markman cubic}

\begin{theorem}\label{thm:dmcub}
The Donagi-Markman cubic is given by the following residue formula:
\[
c(X,Y,Z) = -2\pi i \sum_a \underset{a}{Res} \left( \frac{ (\nabla_X \theta)(\nabla_Y \theta )(\nabla_Z \theta)\xi}{\theta} \right),
\]
where the sum is over the ramification points of $\pi$. With respect to local special K\"ahler coordinates $(z^1 , \dots , z^{g_S})$ this expression takes the form
\begin{equation}\label{equ:cijk}
c_{ijk} = -2\pi i \sum_a \underset{a}{Res} \left( \frac{ \omega_i \omega_j \omega_k \xi }{\theta} \right).
\end{equation}
\end{theorem}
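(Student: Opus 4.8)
The plan is to start from the Yukawa-type expression already established in Equation (\ref{equ:d3}), namely $c(X,Y,Z) = \int_S \nabla_X \nabla_Y \nabla_Z \theta \wedge \theta$, and to reduce the triple Gauss--Manin derivative to a single Kodaira--Spencer contraction, which can then be evaluated by a residue computation at the ramification points.

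First I would integrate by parts once. Since the Gauss--Manin connection preserves the intersection pairing $\int_S \,\cdot\, \wedge \,\cdot\,$, differentiating Equation (\ref{equ:d2}) gives $0 = X \int_S \nabla_Y \nabla_Z \theta \wedge \theta = \int_S \nabla_X \nabla_Y \nabla_Z \theta \wedge \theta + \int_S \nabla_Y \nabla_Z \theta \wedge \nabla_X \theta$, so that $c(X,Y,Z) = \int_S \nabla_X \theta \wedge \nabla_Y \nabla_Z \theta$. Now $\nabla_X \theta$ and $\nabla_Z \theta$ are the holomorphic $1$-forms of the preceding Lemma, so the integrand only sees the $(0,1)$-part of $\nabla_Y \nabla_Z \theta$; by Griffiths transversality this is the cup product of $\nabla_Z \theta$ with the Kodaira--Spencer class $\kappa(Y) = \overline{\partial} W(Y)$ of Definition \ref{def:delta}, where $W(Y) \in \Omega^0(S , T_S(D))$ is the meromorphic vector field with poles along the ramification divisor $D$. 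Hence $c(X,Y,Z) = \int_S \nabla_X \theta \wedge \big( \kappa(Y) \cdot \nabla_Z \theta \big)$, the last factor being the smooth $(0,1)$-form $i_{W(Y)}\nabla_Z\theta$ differentiated by $\overline{\partial}$.

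Next I would convert this to residues. The cup product $\kappa(Y) \cdot \nabla_Z \theta$ is a globally smooth $(0,1)$-form, and away from $D$ it equals $\overline{\partial}( i_{W(Y)} \nabla_Z \theta )$; since $\nabla_X \theta$ is holomorphic, on $S \setminus D$ one has $\nabla_X \theta \wedge \overline{\partial}( i_{W(Y)} \nabla_Z \theta ) = -d\big( (i_{W(Y)} \nabla_Z \theta)\, \nabla_X \theta \big)$. Because the cup product is integrable, excising small disks around each ramification point, applying Stokes' theorem, and letting the radii tend to zero, the boundary integrals pick out $2\pi i$ times the residues of the meromorphic $1$-form $(i_{W(Y)} \nabla_Z \theta)\, \nabla_X \theta$, giving $c(X,Y,Z) = 2\pi i \sum_a \underset{a}{Res} \big( (i_{W(Y)} \nabla_Z \theta)\, \nabla_X \theta \big)$. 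The delicate point here, which I expect to be the main obstacle, is precisely this localisation: one must keep careful track of the smooth cup-product representative versus its meromorphic primitive $i_{W(Y)}\nabla_Z\theta$ near $D$ (equivalently, justify the excised-disk Stokes argument) in order to obtain the correct constant $2\pi i$ and sign.

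Finally I would identify these residues with the claimed expression by a local computation at each ramification point $a$. Using $W(\partial) = \tilde{V} - \hat{V}$ together with $\hat{V} = \frac{\nabla_\partial \theta}{\theta}\xi$ from Remark \ref{rem:vhat}, and the local normal forms $\pi(q) = x = q^2$, $\theta = y\,dx$, $\xi = y\partial_y$, one sees that $\tilde{V}$ is regular at $a$ and contributes no residue, so only the polar part of $W(Y)$ matters. Writing $\nabla_Y\theta = \omega_Y(q)\,dq$ near $a$, the leading pole coefficient of $W(Y)$ is governed by the branch-point-motion relation $2q\,V^y - V^x y'(q) = \omega_Y(q)$, giving $V^x(a) = -\omega_Y(a)/y'(a)$; substituting this yields $\underset{a}{Res}\big( (i_{W(Y)} \nabla_Z \theta)\, \nabla_X \theta \big) = -\underset{a}{Res} \frac{(\nabla_X\theta)(\nabla_Y\theta)(\nabla_Z\theta)\xi}{\theta}$. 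This produces the symmetric expression and the overall sign $-2\pi i$; specialising to $X = \partial_i$, $Y = \partial_j$, $Z = \partial_k$ (so $\nabla_{\partial_i}\theta = \omega_i$) gives Equation (\ref{equ:cijk}). A final consistency check is that, although the derivation singled out the roles of $X$, $Y$, $Z$, the resulting residues are manifestly symmetric in all three arguments, as they must be by Equation (\ref{equ:d2}).
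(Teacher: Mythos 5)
Your proposal is correct and follows essentially the same route as the paper's proof: differentiate Equation (\ref{equ:d2}) to obtain $c(X,Y,Z) = -\int_S \nabla_Y \nabla_Z \theta \wedge \nabla_X \theta$, apply Griffiths transversality with $\kappa(Y) = \overline{\partial} W(Y)$, localise by an excised-disk Stokes argument at the ramification points, and identify the residues from the polar part of $W(Y)$ via $\hat{V} = \frac{\nabla_Y \theta}{\theta} \xi$. The only cosmetic difference is that your final step rederives the polar coefficient through the branch-point-motion relation in the simple-branching normal form $x = q^2$ (in effect reproving Lemma \ref{lem:hatv2} and Remark \ref{rem:zeta} locally), whereas the paper expresses the residue through the meromorphic section $\zeta$ of Remark \ref{rem:inter}, which also covers higher-order ramification.
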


\begin{remark}\label{rem:inter}
Before giving the proof we should comment on how to interpret the right hand side of this formula. The vector field $\xi$ is a section of $T_{T^*\Sigma}|_{S_b}$ which in general is not tangent to $S_b$. However, it is precisely at the ramification points where we have that $\xi$ is tangent to $S_b$. Let $\zeta$ be a meromorphic section of $T_S^{2}$ defined in a neighbourhood of $a$ and such that, when viewed as a section of $T_S \otimes T_{T^*\Sigma}$, the difference $\frac{\xi}{\theta} - \zeta$ is holomorphic. Then the right hand side of the formula can be read as:
\[
-2\pi i \sum_a \underset{a}{Res} \left( (\nabla_X \theta)(\nabla_Y \theta )(\nabla_Z \theta) \zeta \right). 
\]
Note that such a $\zeta$ always exists. Indeed, in local coordinates $(x,y)$ on $T^*\Sigma$, $\frac{\xi}{\theta} = \frac{ y \partial_y}{y dx} = \frac{\partial_y}{dx}$. If $a$ is a ramification point of degree $r(a)$ then $dx|_S$ vanishes to order $r(a)-1$. But $y(a)$ is a zero of the characteristic polynomial $p(y,x)$ of order $r(a)$, so $(\partial^j_y p)(a) = 0$ for $j = 0 , \dots , r(a)-1$. In other words, $\partial_y$ is tangent to $S$ at $a$ to order $r(a)-1$, hence the polar part of $\frac{\xi}{\theta}$ is tangent to $S$. We will give a more explicit formula for this expression below, in the case where $\pi$ has simple branching only (Remark \ref{rem:zeta}).
\end{remark}
\begin{proof}
We start with Equation (\ref{equ:d2}):
\[
\int_S \nabla_Y \nabla_Z \theta \wedge \theta = 0.
\]
Applying $\nabla_X$ then gives
\[
\int_S \nabla_X \nabla_Y \nabla_Z \theta \wedge \theta + \int_S \nabla_Y \nabla_Z \theta \wedge \nabla_X \theta = 0.
\]
Combined with Equation (\ref{equ:d3}), we get:
\[
c(X,Y,Z) = -\int_S \nabla_Y \nabla_Z \theta \wedge \nabla_X \theta.
\]
Since $\nabla_X \theta$ is a $(1,0)$-form, this integral only depends on the $(0,1)$-part of $\nabla_Y \nabla_Z \theta$. By the Griffiths transversality theorem for variations of Hodge structure we know that:
\[
( \nabla_Y \nabla_Z \theta)^{(0,1)} = \kappa(Y) \cup \nabla_Z \theta \in H^1(S , \mathcal{O}),
\]
where $\cup$ denotes the cup product $\cup : H^1(S , T_S ) \otimes H^0(S , K_S) \to H^1(S , \mathcal{O})$. Thus
\[
c(X,Y,Z) = -\int_S \left( \kappa(Y) \cup \nabla_Z \theta \right) \wedge \nabla_X \theta.
\]
Let $V = \chi(Y)$, and let $\tilde{V}$ be a lift of $V$ to a smooth section of $T_{T^*\Sigma}|_{S_b}$. Let $\hat{V}$ be as in Lemma \ref{lem:hatv2}, which by the Remark \ref{rem:vhat} is given by
\[
\hat{V} = \frac{\nabla_Y \theta}{\theta} \xi.
\]
Recall that $W = \tilde{V} - \hat{V}$ is a smooth section of $T_{S_b}(D)$ and that $\overline{\partial}(W) = \overline{\partial}( \tilde{V} )$ is a representative for the Kodaira-Spencer class $\kappa(Y)$. Let $\zeta$ be as in Remark \ref{rem:inter}. It follows that $W = -(\nabla_Y \theta) \zeta + W'$, where $W'$ is smooth. We then have
\begin{equation*}
\begin{aligned}
c(X,Y,Z) &= -\int_S \left( \overline{\partial} W \cup \nabla_Z \theta \right) \wedge \nabla_X \theta \\
&= -\int_S \overline{\partial} \left(  i_W ( \nabla_Z \theta) \wedge \nabla_X \theta \right) \\
&= -2\pi i \sum_a \underset{a}{Res} \left(  i_{(\nabla_Y \theta)\zeta} ( \nabla_Z \theta) \wedge \nabla_X \theta \right) \\
&= -2 \pi i \sum_a \underset{a}{Res} \left( (\nabla_X \theta)(\nabla_Y \theta )(\nabla_Z \theta)\zeta \right), \\
&= -2 \pi i \sum_a \underset{a}{Res} \left( \frac{ (\nabla_X \theta)(\nabla_Y \theta )(\nabla_Z \theta)\xi}{\theta} \right),
\end{aligned}
\end{equation*}
where the sum is over the ramification points of $S$.
\end{proof}

\begin{remark}\label{rem:zeta}
We may re-write Equation \ref{equ:cijk} in terms of local coordinates as follows. For this, we will assume {\em $\pi$ has only simple ramification points}. Let $a \in S$ be a ramification point. Let $x$ be a local coordinate on $\Sigma$ centered at $\pi(a)$, $q$ a local coordinate on $S$ centered at $a$ chosen so that $\pi(q) = x = q^2$. Lastly, let $(x,y)$ be local coordinates on $T^*\Sigma$ with $\theta = y dx$ the canonical $1$-form and $\xi = y \partial_y$. At the point $a$, we have $\partial_y = \frac{dq}{dy}\partial_q$. Therefore we may choose $\zeta$ in Remark \ref{rem:inter} to be
\begin{equation}\label{equ:zeta}
\zeta = \frac{ y \frac{dq}{dy} \partial_q }{ y dx } = \frac{1}{dx dy }.
\end{equation}
Then the residue contribution to $c_{ijk}$ at $a$ is:
\[
\underset{q \to 0}{Res} \left( \omega_i \omega_j \omega_k \zeta \right) = \underset{q\to0}{Res}\left( \frac{ \omega_i \omega_j \omega_k}{dx dy } \right).
\]
We will assume that coordinates $x,q$ have been chosen around each ramification point and so we will write Equation (\ref{equ:cijk}) as:
\[
c_{ijk} = -2\pi i \sum_a \underset{a}{Res} \left( \frac{\omega_i \omega_j \omega_k}{dx dy} \right).
\]
\end{remark}

\section{Relation to topological recursion}\label{sec:toprec}

\subsection{Topological recursion for Hitchin spectral curves}

Topological recursion, as introduced in \cite{EO}, is a recursive procedure which takes a Riemann surface $S$ (we assume $S$ is compact) with meromorphic functions $x,y$ such that $dx$ has only simple zeros, and produces a collection of symmetric multidifferentials $W^{(g)}_n$, for $g \ge 0$, $n \ge 1$, known as the {\em Eynard-Orantin invariants}. By ``multidifferential" we mean that $W^{(g)}_n$ is a meromorphic section of the $n$-th exterior tensor product $K_S^{\boxtimes n} = K_S \boxtimes K_S \boxtimes \cdots \boxtimes K_S$ on $S^n$. By symmetric we mean invariant under the action of the permutation group on $S^n$. The recursion formula requires a choice of symplectic basis $a_1 , \dots , a_{g_S} , b_1 , \dots , b_{g_S}$. The two base cases\footnote{An alternative convention is to define $W^{(0)}_1 = y dx$, but all other $W^{(g)}_n$'s are unchanged.} are:
\begin{equation*}
\begin{aligned}
W^{(0)}_1(p) &= 0, \\
W^{(0)}_2(p_1,p_2) &= B(p_1,p_2),
\end{aligned}
\end{equation*}
where $B(p_1,p_2)$ is the Bergman kernel\footnote{One can extend topological recursion by replacing $B$ with a modified version of the Bergman kernel, however we will not make use of this generalisation.} on $S \times S$. Recall (eg, \cite{Fay0}) that this is the unique meromorphic symmetric bi-differential which is holomorphic away from the diagonal and which, in a local coordinate $q(p)$, has an expansion around the diagonal of the form:
\[
B( q(p_1) , q(p_2) ) = \frac{dq(p_1)dq(p_2)}{(q(p_1)-q(p_2))^2} + O(1) dq(p_1)dq(p_2)
\]
and such that $B$ is normalised with respect to the symplectic basis in the sense that $\int_{p_1 \in a_i} B(p_1 , p_2) = 0$ for $i = 1 , \dots , g_S$. To define the recursion formula we need some further notation. By assumption the map $x : S \to \mathbb{P}^1$ is simply branched. Thus, for any ramification point $a \in S$, we can find a neighbourhood $a \in U \subset S$ and a non-trivial involution $\sigma : U \to U$ such that $x \circ \sigma = x$. If $q \in U$ one writes $\overline{q} = \sigma(q)$. Thus $x(q) = x(\overline{q})$. Let $\omega(q)$ denote the $1$-form on $U$ given by
\[
\omega(q) = ( y(q) -  y(\overline{q}) dx(q) = (\theta - \sigma^*\theta)(q),
\]
where $\theta = y dx$. For $p \in S$ and $q \in U$, we define $dE_q(p)$ as follows. We assume the neighbourhood $U$ is chosen to be simply-connected and set
\[
dE_q(p) = \frac{1}{2} \int_{\xi \in q}^{\overline{q}} B(\xi , p),
\]
where the path of integration is taken to lie in $U$. Clearly this is independent of the choice of path of integration. 

Now we are ready to state the recursion formula. Let $p_1, \dots , p_n$ be points on $S$. If $K = \{ i_1 , \dots , i_k\}$ is a subset of $\{1,2, \dots , n\}$, we let $p_K$ be the $k$-tuple $p_K = (p_{i_1} , p_{i_2} , \dots , p_{i_k})$. Then for $2g - 2 + k > 0$ we define
\begin{equation}\label{equ:toprec}
\begin{aligned}
&W^{(g)}_{k+1}(p,p_K) = \\
& \underset{q \to a}{Res} \frac{dE_q(p)}{\omega(q)} \left( \sum_{m=0}^{g} \sum_{J \subseteq K} W^{(m)}_{|J|+1}(q , p_J) W^{(g-m)}_{k-|J|+1}(\overline{q} , p_{K \setminus J} ) + W^{(g-1)}_{k+2}(q,\overline{q} , p_K) \right)
\end{aligned}
\end{equation}
where the sum $\sum_{J \subseteq K}$ is over all subsets $J \subseteq K$. Notice that all non-zero terms on the right hand side involve only terms $W^{(g')}_{k'}$ with $2g'-2+k' < 2g-2+k$. Therefore, this gives a recursive definition of the $W^{(g)}_k$.

The topological recursion formula was adapted to the case of Hitchin spectral curves in \cite{dumu}. Here the map $x : S \to \mathbb{P}^1$ is replaced by $\pi : S \to \Sigma$. To make sense of the recursion formula (\ref{equ:toprec}) on a spectral curve $S \subset T^*\Sigma$, note that the formula does not directly involve the functions $x,y$ only the $1$-form $\theta = y dx$. For the recursive formula, we only need the Bergman kernel $B$, the local involutions $\sigma$ about each ramification point, and the $1$-forms $\omega = \theta - \sigma^* \theta$, defined around each ramification point. The local involutions $\sigma$ are well-defined in a neighbourhood of each ramification point {\em provided $\pi : S \to \Sigma$ has only simple branching}. We will assume for the rest of this paper that this is the case.

\subsection{Variational formulas}

Our goal in this section is to relate the $g=0$ Eynard-Orantin invariants of spectral curves to the special K\"ahler geometry of $B^{\rm reg}$. The key result which ties these together is the following variational formula for the Bergman kernel:
\begin{proposition}[Rauch variational formula]\label{prop:rvf}
Let $\partial \in T_b B^{\rm reg}$. Assume $p,r$ are distinct and are not ramification points. Then:
\[
\delta B( p, q ) = -\sum_a \underset{u \to a}{Res} \; \frac{ \delta \theta(u) B(u,p)B(u,r) }{ dx(u) dy(u) },
\]
where the sum is over the ramification points of $\pi$ and for each ramification point $a \in S_b$, we choose coordinate functions $x,q$ as in Remark \ref{rem:zeta}.
\end{proposition}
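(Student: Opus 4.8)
The plan is to characterise both sides as meromorphic $1$-forms in $p$ (for fixed generic $r$) via their $a$-periods and their singularities, and then to use that a holomorphic $1$-form with vanishing $a$-periods must be zero. Concretely, set
\[
\beta(p,r) = \delta B(p,r) + \sum_a \underset{u\to a}{Res}\;\frac{\delta\theta(u)B(u,p)B(u,r)}{dx(u)dy(u)},
\]
and aim to show that, for each fixed $r$ away from the ramification points, $\beta(\cdot,r)$ is a holomorphic $1$-form with all $a$-periods zero. Since $\omega_1,\dots,\omega_{g_S}$ is a basis of $H^0(S,K_S)$ with $\int_{a_i}\omega_j=\delta_{ij}$, this forces $\beta\equiv 0$, which is exactly the claimed identity.

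First I would record the $a$-periods. Each $a_i$ is a covariantly constant cycle and $B$ is normalised so that $\int_{a_i}B(\cdot,r)=0$ for every $b\in B^{\rm reg}$; differentiating gives $\int_{a_i}\delta B(\cdot,r)=0$. The residue term also has vanishing $a$-periods in $p$, because $\int_{p\in a_i}B(u,p)=0$ for each $u$ and the $a_i$-integration is in a different variable from the residue. Hence $\beta(\cdot,r)$ has zero $a$-periods. Next I would locate the singularities. Away from the ramification points $\delta$ is the derivative along the family holding $x=\pi$ fixed (Definition \ref{def:delta}); writing $B$ in the local $x$-coordinate as $\bigl((x(p)-x(r))^{-2}+\text{reg}\bigr)dx(p)\,dx(r)$, the singular part is independent of the deformation parameter, so $\delta B$ is holomorphic along the diagonal. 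Thus the only poles of $\delta B(\cdot,r)$, and likewise of the residue term, sit at the ramification points, and the entire content reduces to cancellation of principal parts there.

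The crux, and the main obstacle, is this local cancellation at each ramification point $a$. Near $a$ take the coordinate $q$ with $x=q^2$ and let $x_0(t)$ be the moving branch value, so that at fixed $x$ one has $\partial_t q|_x=-\dot x_0/(2q)$ — precisely the pole of $W$ in $\delta=\partial-W$. Writing $B(p,r)=g(q_p;t)\,dx_p/(2q_p)$ with $r$ fixed and differentiating at fixed $x_p$, the simple-pole terms cancel and one obtains $\delta B(p,r)=\tfrac12\dot x_0\,g(0)\,dq_p/q_p^2+O(1)$, where $g(0)\,dq_p$ is the value of $B(\cdot,r)$ at $a$. On the other side $\delta\theta/(dx\,dy)=\bigl(\alpha(q)/(2q\,y'(q))\bigr)\partial_q$ has a simple pole at $q=0$, and a short residue computation with $B(u,p)=\bigl((q_u-q_p)^{-2}+\cdots\bigr)dq_u\,dq_p$ shows that the claimed right-hand side has principal part $-\tfrac{\alpha(0)}{2y'(0)}B(a,r)\,dq_p/q_p^2$ at $p=a$. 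Here $\alpha=\delta\theta=\nabla_\partial\theta$ is the holomorphic representative.

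These two principal parts coincide exactly when $\dot x_0=-\alpha(0)/y'(0)$, which I would verify from the regularity of the lift $\tilde V=W+\hat V$: writing $W=w(q)\partial_q$ and viewing it on $T^*\Sigma$ via $\partial_q=2q\,\partial_x+y'(q)\partial_y$, the condition $\pi_*W=Y$ with $Y(a)=\dot x_0$ fixes $w(q)\sim\dot x_0/(2q)$, while $\hat V=(\alpha/\theta)\xi$ (Lemma \ref{lem:hatv2}) contributes $\alpha(q)/(2q)\,\partial_y$; regularity of the $\partial_y$-component $w(q)y'(q)+\alpha(q)/(2q)$ then forces $\dot x_0 y'(0)+\alpha(0)=0$. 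With the principal parts matching, $\beta(\cdot,r)$ is holomorphic with vanishing $a$-periods, hence zero, which is the assertion. I expect the genuine difficulty to lie entirely in this fixed-$x$ variation near the branch point: organising which terms are singular and checking that the residue-carrying simple poles drop out on both sides, leaving only the matching double poles.
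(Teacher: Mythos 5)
Your proof is correct, but it takes a genuinely different route from the paper's. The paper fixes a differentiable trivialisation in which $W$ vanishes near $p$ and $r$ (so that $\delta B=\partial B$), invokes Fay's variational formula for the Bergman kernel with the Kodaira--Spencer representative $\kappa=\overline{\partial}W$, converts the resulting principal-value integral into contour integrals around the poles of $W(u)B(u,p)B(u,r)$, and finally substitutes the decomposition $W=-(\delta\theta)\zeta+W'$ with $\zeta=1/(dx\,dy)$ to land on the residue formula. You instead run a uniqueness argument: the difference $\beta(\cdot,r)$ of the two sides has vanishing $a$-periods (flatness of the cycles under Gauss--Manin plus the normalisation of $B$), no pole on the diagonal (the double pole of $B$ written in the fixed $x$-coordinate is deformation-independent), and matching principal parts at the ramification points, hence vanishes identically. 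Your local computations check out: both sides have pure double poles at each $a$ — the simple-pole terms cancel exactly as you claim — with coefficients $\tfrac12\dot x_0\,g(0)$ and $-\alpha(0)g(0)/(2y'(0))$ respectively, and the matching condition $\dot x_0=-\alpha(a)/y'(a)$ is precisely what regularity of the $\partial_y$-component of $\tilde V=W+\hat V$ gives via Lemma \ref{lem:hatv2}; this is a nice re-use of the paper's lifting machinery, which the paper itself exploits only through the decomposition $W=-(\delta\theta)\zeta+W'$. What the paper's route buys is that all the hard analysis (existence and form of the variation of $B$ under an arbitrary smooth deformation) is outsourced to Fay's formula; what yours buys is a more self-contained argument in which, because the fixed-$\pi$ trivialisation freezes the complex structure away from the branch points, holomorphy of $\delta B$ in $p$ comes essentially for free. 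The one step you should make explicit is the differentiability (indeed holomorphy) of $B$ in the modulus $b$, locally uniformly in $p$: this is standard (for instance from the theta-function expression for the Bergman kernel, or from the very variational formula of Fay that the paper cites), but it is the analytic input that your holomorphy-plus-periods argument silently uses.
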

\begin{proof}
Although this formula is well known, we were unable to find a satisfactory proof in the literature that applies to our setting, so we provide a proof here. Choose a local differentiable trivialisation in a neighborhood $U$ of $b$: $Z^{\rm reg}|_U \cong U \times S$, so that $\delta = \partial - W$. Since $\delta$ is independent of the choice of local trivialisation, we are free to choose such a trivialisation at our convenience. Changing a given local trivialisation by a suitably chosen diffeomorphism yields a change in $W$ of the form $W \mapsto W + X$, where $X$ is an arbitrary smooth vector field on $S$ ($X$ has no poles). From this it is clear that, for a fixed choice of points $p,r$ distinct from the ramification points, we can assume $W$ vanishes in a neigbourhood of $p$ and $r$. Then
\begin{equation*}
\delta B(p,r) = \partial B(p,r) - \mathcal{L}_{W(p)} B(p,r) - \mathcal{L}_{W(r)} B(p,r) = \partial B(p,r),
\end{equation*}
because $W$ vanishes around $p$ and $r$. Next, we have the following variational formula for $B(p,r)$ \cite[page 57]{Fay}:
\begin{equation*}
\begin{aligned}
\partial B(p,r) &= \kappa(p) B(p,r) + \kappa(r) B(r,p) \\
& \quad \quad - \frac{1}{2\pi i} \text{ p.v.} \int_S ( \kappa( \, \cdot \, ) B( \, \cdot \,  , p ) ) \wedge B( \, \cdot \, , r ), \\
\end{aligned}
\end{equation*}
where $\text{p.v.}$ denotes the Cauchy principal value of the integral. Using $\kappa = \overline{\partial} W$, we obtain
\begin{equation*}
\begin{aligned}
\partial B(p,r) &= \kappa(p) B(p,r) + \kappa(q) B(r,p) \\
& \quad \quad + \frac{1}{2\pi i} \sum_a  \int_{u \in \gamma_a} W( u ) B( u  , p )  B( u , r ) , \\
&= \frac{1}{2\pi i} \sum_a  \int_{u \in \gamma_a} W( u ) B( u  , p )  B( u , r ) ,
\end{aligned}
\end{equation*}
where we obtain the last line because $\kappa = 0$ at $p$ and $r$ by our assumption on $W$. The sum $\sum_a$ is taken over all poles of $W( u ) B( u  , p ) B( u , r )$, namely $a$ is a ramification point, $a = p$ or $a = r$ and $\gamma_a$ is a contour around the point $a$. However, we again have by our assumptions on $W$ that there are no residue contributions from the points $p,r$ and therefore we can take the sum to just be over the ramification points of $\pi$. Now, as in the proof of Theorem \ref{thm:dmcub}, we may write $W = -(\delta \theta) \zeta + W'$, where $W'$ is smooth and $\zeta$ is given by (\ref{equ:zeta}). Then
\begin{equation*}
\begin{aligned}
\delta B(p,r) &= \frac{1}{2\pi i} \sum_a  \int_{u \in \gamma_a} (-(\delta \theta)(u) \zeta(u) + W'(u) ) B( u  , p )  B( u , r ) ,\\
&= -\sum_a \underset{u \to a}{Res} \; (\delta \theta)(u) \zeta(u) B(u,p)B(u,r) \\
&= -\sum_a \underset{u \to a}{Res} \;  \frac{ \delta \theta(u) B(u,p)B(u,r)}{dx(u) dy(u)}.
\end{aligned}
\end{equation*}
\end{proof}

Let $(z^1 , \dots , z^{g_S})$ be local special K\"ahler coordinates on $B^{\rm reg}$. Let $\partial_i$ denote the vector field $\frac{\partial}{\partial z^i}$ and let $\delta_i$ denote $\delta( \partial_i)$. We have:
\begin{theorem}[Variational formula \cite{EO}]
For $g+k > 1$, 
\[
\delta_i W^{(g)}_k(p_1 , \dots , p_k) = -\frac{1}{2\pi i} \int_{p \in b_i} W^{(g)}_{k+1}(p, p_1 , \dots , p_k).
\]
\end{theorem}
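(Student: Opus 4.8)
\emph{Overall strategy.} The plan is to prove the formula by induction on $2g-2+k$, following the argument of \cite{EO}; the only ingredient special to Hitchin spectral curves is the Rauch variational formula (Proposition \ref{prop:rvf}), together with the fact, from Lemma \ref{lem:nablachi} and the earlier computation $\nabla_{\partial_i}\theta=\omega_i$, that $\delta_i\theta=\omega_i$ as honest holomorphic $1$-forms. Throughout I use the standard normalisation of the Bergman kernel, $\int_{p\in a_i}B(\cdot,p)=0$ and $\int_{p\in b_i}B(\cdot,p)=2\pi i\,\omega_i$ (see \cite{Fay0}). The hypothesis $g+k>1$ excludes only $W^{(0)}_1=0$, so the induction has a genuine base at $W^{(0)}_2=B$.

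\emph{Base case.} From the recursion (\ref{equ:toprec}) one obtains, by a standard computation, the closed form $W^{(0)}_3(p,p_1,p_2)=\sum_a \underset{q\to a}{Res}\, B(q,p)B(q,p_1)B(q,p_2)/(dx(q)\,dy(q))$. Integrating in the first slot over $b_i$ and applying $\int_{p\in b_i}B(q,p)=2\pi i\,\omega_i(q)$ turns $-\tfrac{1}{2\pi i}\int_{p\in b_i}W^{(0)}_3(p,p_1,p_2)$ into $-\sum_a \underset{q\to a}{Res}\,\omega_i(q)B(q,p_1)B(q,p_2)/(dx\,dy)$, which is exactly $\delta_i B(p_1,p_2)$ by Proposition \ref{prop:rvf} with $\delta_i\theta=\omega_i$. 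The constants match, settling the base case.

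\emph{Inductive step.} Fix $W^{(g)}_N$ with $2g-2+N>0$, assume the formula for all invariants of strictly smaller $2g'-2+N'$, and write (\ref{equ:toprec}) as $W^{(g)}_N(x_1,p_K)=\sum_a \underset{q\to a}{Res}\,K(x_1,q)\,R(q,\overline{q};p_K)$, where $K(x_1,q)=dE_q(x_1)/\omega(q)$ and $R$ is the bracketed sum of products of lower invariants. Because the lift $\delta$ of Definition \ref{def:delta} keeps $\pi=x$ fixed, the ramification points (the zeros of $dx$) are stationary on the fixed surface $S$, so $\delta_i$ commutes with $\underset{q\to a}{Res}$; the product rule then gives $\delta_i W^{(g)}_N=\sum_a \underset{q\to a}{Res}\,[(\delta_i K)R+K(\delta_i R)]$. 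On the other side I expand $-\tfrac{1}{2\pi i}\int_{x_0\in b_i}W^{(g)}_{N+1}(x_1,x_0,p_K)$ by the same recursion with $x_1$ distinguished; since $K(x_1,q)$ is independent of $x_0$, the integral passes inside and distributes the leg $x_0$ among the factors of $R$. Every term in which $x_0$ lands inside a \emph{stable} factor equals, by the inverted induction hypothesis $\int_{x_0\in b_i}W^{(g')}_{N'+1}=-2\pi i\,\delta_i W^{(g')}_{N'}$ and the symmetry of the $W$'s, precisely $-2\pi i$ times the matching term of $\delta_i R$; these reproduce the $K(\delta_i R)$ contribution exactly. The only exceptions are the two \emph{boundary} terms in which $x_0$ attaches to $q$ or to $\overline{q}$ through a factor $W^{(0)}_2=B$, i.e. $B(q,x_0)W^{(g)}_N(\overline{q},p_K)+B(\overline{q},x_0)W^{(g)}_N(q,p_K)$; here the induction hypothesis would involve the variation of $W^{(0)}_1=0$, yet $\int_{x_0\in b_i}B=2\pi i\,\omega_i\neq0$, so these are genuinely extra and must be supplied by $\delta_i K$.

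\emph{The kernel lemma and the main obstacle.} It therefore remains to prove
\[
\sum_a \underset{q\to a}{Res}\,(\delta_i K(x_1,q))\,R(q,\overline{q};p_K)=-\sum_a \underset{q\to a}{Res}\,K(x_1,q)\big[\omega_i(q)W^{(g)}_N(\overline{q},p_K)+\omega_i(\overline{q})W^{(g)}_N(q,p_K)\big],
\]
which I expect to be the hard part. One computes $\delta_i K$ from its two pieces: $\delta_i\omega(q)=\omega_i(q)-\omega_i(\overline{q})$, and $\delta_i\,dE_q(x_1)=\tfrac12\int_{q}^{\overline{q}}\delta_i B(\cdot,x_1)$ evaluated by Rauch (Proposition \ref{prop:rvf}). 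The mechanism is that Rauch combines with $\tfrac12\int_{q}^{\overline{q}}B(u,\cdot)=dE_q(u)$ and the identity $K(u,q)=dE_q(u)/\omega(q)$, so that after exchanging the order of the two residue sums the inner sum $\sum_a \underset{q\to a}{Res}\,K(u,q)R(q,\overline{q};p_K)$ collapses, \emph{by the recursion itself}, to $W^{(g)}_N(u,p_K)$; a residue manipulation together with $\omega_i=\tfrac{1}{2\pi i}\int_{b_i}B$ then matches the result against the right-hand side above. Carrying out this matching—controlling the interplay of the $\omega$-part and the $E$-part of $\delta_i K$ and the $\sigma$-symmetrisation $q\leftrightarrow\overline{q}$ under the residue—is the technical heart of the proof. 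The secondary point requiring attention in the Hitchin setting, as opposed to the classical one of \cite{EO}, is the commutation of $\delta_i$ with the residue, which is exactly what the $\pi$-preserving lift $\delta$ of Definition \ref{def:delta} guarantees.
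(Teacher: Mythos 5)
Your overall architecture — induction on $2g-2+k$, base case $W^{(0)}_2=B$ handled via the closed form of $W^{(0)}_3$, Proposition \ref{prop:rvf}, and $\delta_i\theta=\omega_i$ — is sound, and the base case is complete and correct. But the inductive step is not a proof: you correctly reduce everything to your displayed ``kernel lemma'' and then explicitly defer it (``which I expect to be the hard part \ldots the technical heart of the proof''). That identity is exactly where all the content of \cite[Theorem 5.1]{EO} lives. In particular, the manoeuvre you describe — exchanging the two residue sums so that the inner sum ``collapses, by the recursion itself, to $W^{(g)}_N(u,p_K)$'' — is only legitimate when the two residue points lie at distinct ramification points; the diagonal terms $a=b$, where the contour in $u$ and the contour in $q$ interlock, produce extra contributions that must be shown to combine with the $\delta_i\omega(q)$-piece of $\delta_i K$ (note $\delta_i\omega(q)=\omega_i(q)-\omega_i(\overline{q})$ is precisely supported on the same local data) to yield the two boundary terms $B(q,x_0)W^{(g)}_N(\overline{q},p_K)+B(\overline{q},x_0)W^{(g)}_N(q,p_K)$. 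Your sketch never confronts this, so the proof has a genuine hole at its central step. For comparison, the paper avoids re-deriving this entirely: it invokes \cite[Theorem 5.1]{EO} and verifies that its only inputs — the Rauch variational formula (supplied in the Hitchin setting by Proposition \ref{prop:rvf}) and the diagrammatic representation \cite[Theorem 4.8]{EO}, whose proof is purely recursive and local — remain valid for Hitchin spectral curves; in that organisation one varies each Bergman-kernel edge of each diagram via Rauch and reassembles the diagrams of $W^{(g)}_{k+1}$, rather than inducting on the recursion and fighting the residue exchange by hand.

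A secondary but genuine error: your justification for commuting $\delta_i$ with $\underset{q\to a}{Res}$ — ``the ramification points (the zeros of $dx$) are stationary on the fixed surface $S$'' — is false. In a differentiable trivialisation the maps $\pi_t$ vary, the branch points move (this is exactly the vector $Y(a)$ in the paper), and this motion is precisely why $\delta$ acquires poles along the ramification divisor $D$ in Definition \ref{def:delta}; the $\pi$-preserving condition $\pi_*\delta=0$ does not fix the zeros of $d\pi_t$. The correct argument is to rewrite each residue as an integral over a fixed small contour around $a$: for nearby parameter values the (moving) ramification point stays inside the contour, the integrand and its $\delta_i$-derivative are regular on the contour itself, and differentiation then passes under the contour integral. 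With that repair the commutation step is fine, but the kernel lemma above still needs an actual proof before your induction closes.
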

\begin{proof}
This is essentially Theorem 5.1 of \cite{EO}. Since we are working in a setting where $\theta$ is not globally of the form $\theta = y dx$, one needs to check the proof of Theorem 5.1 in \cite{EO} holds in this setting. In fact the proof in \cite[pages 32-34]{EO} essentially only relies on the Rauch variational formula (which we have proven in Proposition \ref{prop:rvf}) and the diagrammatic representation of $W^{(g)}_k$ \cite[Theorem 4.8]{EO}, the proof of which only uses the recursive definition of $W^{(g)}_k$ and does not involve any global properties of the spectral curve.
\end{proof}

\begin{remark}
Note that the poles of $W^{(g)}_k$, in any one of its variables, have zero residues. This can easily be deduced from symmetry and the diagrammatic representation of $W^{(g)}_k$ \cite[Theorem 4.8]{EO}. Therefore the integration of $W^{(g)}_k$ over a cycle $\gamma$ (chosen so as to avoid the poles) depends only on the homology class of $\gamma$ in $S$. Similarly one can also show that the integration of $W^{(g)}_k$ over an $a$-cycle is zero.
\end{remark}

Consider the case $(g,k) = (0,2)$, where $W^{(0)}_2(p_1,p_2) = B(p_1,p_2)$ is the Bergman kernel. Applying the variational formula, we obtain
\[
\delta_i B(p_1,p_2) = -\frac{1}{2\pi i} \int_{p \in b_i} W^{(0)}_{3}(p, p_1 , p_2).
\]
Recall that $\int_{p_1 \in b_j} B(p_1,p_2) = 2 \pi i \omega_j(p_2)$ \cite{Fay0} and thus $\int_{p_1 \in b_j} \int_{p_2 \in b_k} B(p_1,p_2) = 2\pi i \tau_{jk}$. The variational formula then gives
\begin{equation*}
\begin{aligned}
c_{ijk} = \partial_i \tau_{jk} &= \frac{1}{2\pi i} \partial_i \int_{p_1 \in b_j} \int_{p_2 \in b_k} B(p_1,p_2) \\
&= \frac{1}{2\pi i} \int_{p_1 \in b_j} \int_{p_2 \in b_k} \delta_i B(p_1,p_2) \\
&= -\left(\frac{1}{2\pi i}\right)^2 \int_{p \in b_i} \int_{p_1 \in b_j} \int_{p_2 \in b_k} W^{(0)}_3(p, p_1,p_2).
\end{aligned}
\end{equation*}
From \cite[Theorem 4.1]{EO}, we have:
\[
W^{(0)}_3(p,p_1,p_2) = \sum_a \underset{q \to a}{Res} \; \left( \frac{ B(p,q)B(p_1,q)B(p_2,q) }{dx(q)dy(q)} \right).
\]
Therefore, we obtain
\begin{equation*}
\begin{aligned}
c_{ijk} &= -\left(\frac{1}{2\pi i}\right)^2  \sum_a \int_{p \in b_i} \int_{p_1 \in b_j} \int_{p_2 \in b_k} \underset{q \to a}{Res} \; \left( \frac{ B(p,q)B(p_1,q)B(p_2,q) }{dx(q)dy(q)} \right) \\
&= -2\pi i \sum_a \underset{q \to a}{Res} \; \left( \frac{ \omega_i(q)\omega_j(q)\omega_j(q) }{dx(q)dy(q)} \right),
\end{aligned}
\end{equation*}
which agrees with Theorem \ref{thm:dmcub} (see Remark \ref{rem:zeta}). In a similar manner, starting with $W^{(0)}_2(p_1,p_2) = B(p_1,p_2)$ and applying the variational formula multiple times, we obtain:
\begin{theorem}\label{thm:derivtau}
We have:
\[
\partial_{i_1} \partial_{i_2} \cdots \partial_{i_{m-2}} \tau_{ i_{m-1} i_m } = -\left( \frac{i}{2\pi} \right)^{m-1} \int_{p_1 \in b_{i_1}} \cdots \int_{p_m \in b_{i_m} } W^{(0)}_m(p_1 , \dots , p_m).
\]
\end{theorem}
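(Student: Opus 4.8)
The plan is to prove the identity by induction on $m \ge 2$, using the variational formula stated above as the inductive engine and the relation $\int_{b_j}\int_{b_k} B = 2\pi i\,\tau_{jk}$ as the base case. For $m=2$ there are no derivatives on the left, and the claim reads $\tau_{i_1 i_2} = -\tfrac{i}{2\pi}\int_{b_{i_1}}\int_{b_{i_2}} W^{(0)}_2$. Since $W^{(0)}_2 = B$ and $-\tfrac{i}{2\pi}\cdot 2\pi i = 1$, this is precisely the double $b$-period relation for the Bergman kernel recalled above, so the base case holds.

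For the inductive step, I would assume the formula at level $m$ and apply one further derivative $\partial_{i_0}$ to both sides. On the left this produces the $(m-1)$-fold derivative $\partial_{i_0}\partial_{i_1}\cdots\partial_{i_{m-2}}\tau_{i_{m-1}i_m}$, which after relabelling is the left-hand side at level $m+1$. On the right I would move $\partial_{i_0}$ inside the $m$ cycle integrals, replacing it by the variational operator $\delta_{i_0}$ acting on $W^{(0)}_m$ (justified below), and then invoke the variational formula $\delta_{i_0}W^{(0)}_m = -\tfrac{1}{2\pi i}\int_{b_{i_0}}W^{(0)}_{m+1}$, which is legitimate since $0+m>1$. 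Collecting prefactors, the key arithmetic is $-\tfrac{1}{2\pi i} = \tfrac{i}{2\pi}$, so that $-\bigl(\tfrac{i}{2\pi}\bigr)^{m-1}\cdot\bigl(-\tfrac{1}{2\pi i}\bigr) = -\bigl(\tfrac{i}{2\pi}\bigr)^{m}$, which is exactly the constant appearing at level $m+1$. After relabelling the indices this is precisely the asserted formula with $m$ replaced by $m+1$, completing the induction. This is the same mechanism already used for the $m=3$ Donagi--Markman case above, now simply iterated.

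The step I expect to require the most care is the interchange $\partial_{i_0}\int_{b_{i_1}}\cdots\int_{b_{i_m}}W^{(0)}_m = \int_{b_{i_1}}\cdots\int_{b_{i_m}}\delta_{i_0}W^{(0)}_m$. I would work in a local differentiable trivialisation $Z^{\rm reg}|_U \cong U\times S$, in which the $b$-cycles can be taken fixed (they are covariantly constant for the Gauss--Manin connection) and chosen to avoid both the ramification points and the diagonals, so that $W^{(0)}_m$ is holomorphic along them. Differentiating under the integral sign replaces $\partial_{i_0}$ by the naive trivialisation-dependent derivative, which differs from the trivialisation-independent $\delta_{i_0} = \partial_{i_0} - W(\partial_{i_0})$ by a Lie derivative $\mathcal{L}_{W(\partial_{i_0})}$ in each of the $m$ variables. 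Since $W^{(0)}_m$ is a closed $1$-form in each variable away from its poles, Cartan's formula gives $\mathcal{L}_{W(\partial_{i_0})} = d\,i_{W(\partial_{i_0})}$ in that variable; the contracted quantity is a single-valued meromorphic function along the corresponding $b$-cycle, so by Stokes its integral over that closed cycle vanishes. Thus every Lie-derivative correction drops out and only the $\delta_{i_0}$ term survives, exactly as in the Rauch computation of Proposition \ref{prop:rvf}.

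Finally, I would record two consistency points. The zero-residue property of $W^{(0)}_m$ in each variable (noted in the Remark following the variational formula) guarantees that all the cycle integrals depend only on the homology classes of the $b_{i_j}$, so both the integrals and the interchange above are well defined; the same property for $W^{(0)}_{m+1}$ makes the right-hand side at level $m+1$ well defined. It is also worth remarking that the right-hand side is manifestly symmetric in $i_1,\dots,i_m$ because $W^{(0)}_m$ is a symmetric multidifferential, matching the total symmetry of the left-hand side, which follows from $\tau_{jk} = \partial_j\partial_k\mathcal{F}$ and hence $\partial_{i_1}\cdots\partial_{i_{m-2}}\tau_{i_{m-1}i_m} = \partial_{i_1}\cdots\partial_{i_m}\mathcal{F}$.
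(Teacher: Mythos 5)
Your proposal is correct and is essentially the paper's own argument: the paper establishes the base case via $\int_{b_j}\int_{b_k}B = 2\pi i\,\tau_{jk}$, carries out the $m=3$ step explicitly, and then obtains the general case by ``applying the variational formula multiple times,'' which is exactly your induction. Your explicit justification of the interchange $\partial_{i_0}\int_{b_{i_1}}\cdots\int_{b_{i_m}}W^{(0)}_m = \int_{b_{i_1}}\cdots\int_{b_{i_m}}\delta_{i_0}W^{(0)}_m$ (the Lie-derivative corrections being exact in each variable and hence vanishing over the closed $b$-cycles) is a worthwhile detail the paper leaves implicit, but it does not change the route.
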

Therefore, the $g=0$ Eynard-Orantin invariants $W^{(0)}_k$ for a spectral curve $S_b$ compute the power series expansion of the period matrix $\tau_{ij}$ about $b \in B^{\rm reg}$. Since the special K\"ahler metric on $B^{\rm reg}$ is given in terms of the period matrix, the invariants $W^{(0)}_k$ also compute the power series expansion of the special K\"ahler metric.

\subsection{Second derivatives of the period matrix by topological recursion}

We will use Theorem \ref{thm:derivtau} to compute the symmetric quartic $\partial_{i} \partial_{j} \tau_{kl}$ of second derivatives of the period matrix. From the diagrammatic representation of Eynard-Orantin invariants, one finds (\cite[Equation (4-46)]{EO}):
\begin{equation*}
\begin{aligned}
W^{(0)}_4(p,p_1,p_2,p_3) &= \sum_{a,b} \underset{q \to a}{Res} \underset{r \to b}{Res} \frac{dE_q(p)}{\omega(q)}\frac{dE_r(q)}{\omega(r)} \left[ B(\overline{q},p_1)B(r,p_2)B(\overline{r},p_3) + \text{perm}_{1,2,3} \right] \\
& + \sum_{a,b} \underset{q \to a}{Res} \underset{r \to b}{Res} \frac{dE_q(p)}{\omega(q)}\frac{dE_r(\overline{q})}{\omega(r)} \left[ B(q,p_1)B(r,p_2)B(\overline{r},p_3) + \text{perm}_{1,2,3} \right],
\end{aligned}
\end{equation*}
where $\text{perm}_{1,2,3}$ means we sum over all permutations of $p_1,p_2,p_3$. To this, we apply $\left( \frac{1}{2\pi i} \right)^4 \int_{p \in b_i} \int_{p_1 \in b_j} \int_{p_2 \in b_k} \int_{p_3 \in b_l}$, giving
\begin{equation}\label{equ:e1}
\begin{aligned}
& \frac{1}{2\pi i} \int_{p \in b_i} \sum_{a,b} \underset{q \to a}{Res} \underset{r \to b}{Res} \frac{dE_q(p)}{\omega(q)}\frac{dE_r(q)}{\omega(r)} \left[ \omega_j(\overline{q})\omega_k(r)\omega_l(\overline{r}) + \text{perm}_{j,k,l} \right] \\
& + \frac{1}{2\pi i} \int_{p \in b_i} \sum_{a,b} \underset{q \to a}{Res} \underset{r \to b}{Res} \frac{dE_q(p)}{\omega(q)}\frac{dE_r(\overline{q})}{\omega(r)} \left[ \omega_j(q)\omega_k(r)\omega_l(\overline{r}) + \text{perm}_{j,k,l} \right].
\end{aligned}
\end{equation}
Around each ramification point, choose coordinates $x,q$ as usual. Then $dx(q) = 2q dq$, $dy = y'(q)dq$ and we have:
\[
\underset{q \to b}{Res} \frac{dE_q(p) f(q)}{\omega(q)} = -\frac{1}{2} \underset{q \to b}{Res} \frac{B(q,p)f(q)}{dx(q)dy(q)} = -\frac{B(b,p)}{4y'(0)}f(b),
\]
where $f(q)$ is any local section of $K_S^2$, holomorphic in a neighbourhood of $b$. Using this, (\ref{equ:e1}) simplifies to:
\begin{equation}\label{equ:e2}
\begin{aligned}
& \frac{1}{4\pi i} \int_{p \in b_i} \sum_{a,b} \underset{q \to a}{Res} \; B(q,b)\frac{dE_q(p)}{\omega(q)} \omega_j(\overline{q}) \; \left(\frac{\omega_k(b)\omega_l(b)}{2y'(b)} \right) + \text{perm}_{j,k,l} \\
& + \frac{1}{4\pi i} \int_{p \in b_i} \sum_{a,b} \underset{q \to a}{Res} \; B(\overline{q},b)\frac{dE_q(p)}{\omega(q)} \omega_j(q) \; \left(\frac{\omega_k(b)\omega_l(b)}{2y'(b)} \right) + \text{perm}_{j,k,l}.
\end{aligned}
\end{equation}
Consider first the terms where $a \neq b$. Then $B(q,b)$ and $B(\overline{q},b)$ have no pole at $q = a$, so these terms give, on further simplification:
\begin{equation*}
\begin{aligned}
& \frac{1}{4\pi i} \int_{p \in b_i} \sum_{a \neq b} B(a,b) \frac{B(a,p)\omega_j(a)}{2y'(a)}  \left(\frac{\omega_k(b)\omega_l(b)}{2y'(b)} \right) + \text{perm}_{j,k,l} \\
&=  \frac{1}{2} \sum_{a \neq b} B(a,b) \left( \frac{\omega_i(a)\omega_j(a)}{2y'(a)} \right) \left(\frac{\omega_k(b)\omega_l(b)}{2y'(b)} \right) + \text{perm}_{j,k,l}  \\
&= \sum_{a \neq b} B(a,b) \left( \frac{\omega_i(a)\omega_j(a)}{2y'(a)} \right) \left(\frac{\omega_k(b)\omega_l(b)}{2y'(b)} \right) + \text{cyc}_{j,k,l} \\
\end{aligned}
\end{equation*}
where $\text{cyc}_{j,k,l}$ means a sum over cyclic permutations of $j,k,l$. Now let us consider the terms where $a = b$. In this case we must compute the residue as $q \to a$ of $\frac{1}{2\pi i } \int_{p \in b_i } B(q,a)\frac{dE_q(p)}{\omega(q)} \omega_j(\overline{q})$ and of $ \frac{1}{2\pi i } \int_{p \in b_i }  B(\overline{q},a)\frac{dE_q(p)}{\omega(q)} \omega_j(q)$. Both of these have poles of third order. We have the following expansions near $q=0$:
\begin{equation*}
\begin{aligned}
y(q) &= y(0) + y'(0)q + \frac{1}{2} y''(0) q^2 + \frac{1}{6} y'''(0)q^3 + \cdots \\
\omega(q) &= (y(q)-y(-q))2qdq = 4q^2( y'(0) + \frac{1}{6}y'''(0)q^2 + \cdots )dq \\
\omega_j(q) &= \left( \omega_j(0) + \omega'_j(0)q + \frac{1}{2}\omega''_j(0)q^2 + \frac{1}{6}\omega'''_j(0)q^3 + \cdots \right) dq \\
B(q,a) &= \left( \frac{1}{q^2} + \frac{1}{6}S_B(a) + \cdots \right) dq da
\end{aligned}
\end{equation*}
where $S_B(a)$ is the Bergman projective connection \cite{Fay0}. Also, we may compute the expansion of $\frac{1}{2\pi i} \int_{p \in b_i} dE_q(p)$:
\begin{equation*}
\begin{aligned}
\frac{1}{2\pi i} \int_{p \in b_i} dE_q(p) &= \frac{1}{2 \pi i} \int_{p \in b_i} \frac{1}{2} \int_{\xi = q}^{\overline{q}} B(\xi , p) \\
&= \frac{1}{2} \int_{\xi = q}^{\overline{q}} \omega_i(\xi)\\
&= \frac{1}{2} \int_{\xi =q}^{-q} \left( \omega_i(0) + \omega'_i(0)q + \frac{1}{2}\omega''_i(0)q^2 + \cdots \right) \\
&= -q \left ( \omega_i(0) + \frac{1}{6}\omega''_i(0)q^2 + \cdots \right).
\end{aligned}
\end{equation*}
Therefore, the expansion of $\frac{1}{2\pi i } \int_{p \in b_i } B(q,a)\frac{dE_q(p)}{\omega(q)} \omega_j(\overline{q})$ has the form
\begin{equation*}
\begin{aligned}
& \frac{1}{2\pi i } \int_{p \in b_i } B(q,a)\frac{dE_q(p)}{\omega(q)} \omega_j(\overline{q}) \\
&= \left( \frac{1}{q^2} + \frac{1}{6}S_B(a) + \cdots \right)\frac{ q \left ( \omega_i(0) + \frac{1}{6}\omega''_i(0)q^2 + \cdots \right) }{ 4q^2( y'(0) + \frac{1}{6}y'''(0)q^2 + \cdots ) } \left(\omega_j(0) - \omega'_j(0)q + \frac{1}{2}\omega''_j(0)q^2 + \cdots \right)dq da \\
&= \left( \frac{1}{q^2} + \frac{1}{6}S_B(a) + \cdots \right)\frac{ \left ( \omega_i(0) + \frac{1}{6}\omega''_i(0)q^2 + \cdots \right) }{ 4q( y'(0) + \frac{1}{6}y'''(0)q^2 + \cdots ) } \left(\omega_j(0) - \omega'_j(0)q + \frac{1}{2}\omega''_j(0)q^2 + \cdots \right)dq da.
\end{aligned}
\end{equation*}
Adding this to the corresponding expansion for $\frac{1}{2\pi i } \int_{p \in b_i } B(\overline{q},a)\frac{dE_q(p)}{\omega(q)} \omega_j(q)$, we get
\begin{equation*}
\begin{aligned}
& \frac{1}{2\pi i } \int_{p \in b_i } B(q,a)\frac{dE_q(p)}{\omega(q)} \omega_j(\overline{q}) + \frac{1}{2\pi i } \int_{p \in b_i } B(\overline{q},a)\frac{dE_q(p)}{\omega(q)} \omega_j(q) \\
&= \left( \frac{1}{q^2} + \frac{1}{6}S_B(a) + \cdots \right)\frac{ \left ( \omega_i(0) + \frac{1}{6}\omega''_i(0)q^2 + \cdots \right) }{ 2q( y'(0) + \frac{1}{6}y'''(0)q^2 + \cdots ) } \left(\omega_j(0) + \frac{1}{2}\omega''_j(0)q^2 + \cdots \right)dq da.
\end{aligned}
\end{equation*}
The coefficient of $\frac{dq}{q}$ in this is:
\[
\frac{1}{12 y'(0)}\left( S_B(a) - \frac{y'''(0)}{y'(0)} \right) \omega_i(0)\omega_j(0) + \frac{1}{2y'(0)}\left( \frac{1}{2}\omega_i(0) \omega''_j(0) + \frac{1}{6}\omega''_i(0)\omega_j(0) \right).
\]
Therefore, the $a=b$ terms in (\ref{equ:e2}) are given by:
\begin{equation*}
\begin{aligned}
& \frac{1}{2} \sum_a  \frac{1}{24 y'(a)^2}\left( S_B(a) - \frac{y'''(a)}{y'(a)} \right) \omega_i(a)\omega_j(a)\omega_k(a)\omega_l(a) + \text{perm}_{j,k,l}\\
& + \frac{1}{2} \sum_a \frac{1}{4y'(a)^2}\left( \frac{1}{2}\omega_i(a) \omega''_j(a)\omega_k(a)\omega_l(a) + \frac{1}{6}\omega''_i(a)\omega_j(a)\omega_k(a)\omega_l(a) \right) + \text{perm}_{j,k,l} \\
&= \sum_a  \frac{1}{8 y'(a)^2}\left( S_B(a) - \frac{y'''(a)}{y'(a)} \right) \omega_i(a)\omega_j(a)\omega_k(a)\omega_l(a) \\
& \quad \quad + \sum_a \frac{1}{8y'(a)^2}\left( \omega_i''(a) \omega_j(a)\omega_k(a)\omega_l(a) \right) + \text{cyc}_{i,j,k,l}.
\end{aligned}
\end{equation*}
Putting this all together we have shown:
\begin{theorem}
The second derivatives of the period matrix are given by:
\begin{equation}\label{equ:2ndderiv}
\begin{aligned}
\partial_i \partial_j \tau_{kl} &= \left( \frac{1}{2\pi i} \right)^3 \int_{p \in b_i} \int_{p_1 \in b_j} \int_{p_2 \in b_k} \int_{p_3 \in b_l} W^{(0)}_4(p,p_1,p_2,p_3) \\
&= 2\pi i \sum_{a \neq b} B(a,b) \left( \frac{ \omega_i(a)\omega_j(a)}{2y'(a)} \right) \left( \frac{ \omega_k(b)\omega_l(b)}{2y'(b)} \right) + \text{cyc}_{j,k,l} \\
& \quad \quad + 2\pi i \sum_a  \frac{1}{8 y'(a)^2}\left( S_B(a) - \frac{y'''(a)}{y'(a)} \right) \omega_i(a)\omega_j(a)\omega_k(a)\omega_l(a) \\
& \quad \quad + 2 \pi i \sum_a \frac{1}{8y'(a)^2}\left( \omega_i''(a) \omega_j(a)\omega_k(a)\omega_l(a) \right) + \text{cyc}_{i,j,k,l}.
\end{aligned}
\end{equation}
\end{theorem}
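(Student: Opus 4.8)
The plan is to specialise Theorem \ref{thm:derivtau} to $m=4$, which gives at once the first equality
\[
\partial_i \partial_j \tau_{kl} = \left( \frac{1}{2\pi i} \right)^3 \int_{p \in b_i} \int_{p_1 \in b_j} \int_{p_2 \in b_k} \int_{p_3 \in b_l} W^{(0)}_4(p,p_1,p_2,p_3),
\]
once one checks the elementary identity $-(i/2\pi)^3 = (1/2\pi i)^3$. The real content is then the evaluation of this integral in terms of local data at the ramification points. First I would substitute the diagrammatic expression for $W^{(0)}_4$ from \cite[Equation (4-46)]{EO}, which is a sum of double residues of the form $\underset{q\to a}{Res}\,\underset{r\to b}{Res}\,\frac{dE_q(p)}{\omega(q)}\frac{dE_r(\cdot)}{\omega(r)}$ with three Bergman-kernel factors carrying the marked points $p_1,p_2,p_3$.

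Next I would perform the three $b$-cycle integrations over $p_1,p_2,p_3$ using $\int_{p\in b_j} B(p,\cdot) = 2\pi i\,\omega_j(\cdot)$, so that each such Bergman factor is replaced by a normalised differential $\omega_j,\omega_k,\omega_l$ evaluated at the appropriate residue variable ($q$, $\overline q$, $r$ or $\overline r$). In the local coordinate $q$ with $x=q^2$, the inner residue collapses via
\[
\underset{r\to b}{Res}\,\frac{dE_r(p)\,f(r)}{\omega(r)} = -\frac{1}{4y'(0)}\,B(b,p)\,f(b),
\]
leaving a single outer residue at $q\to a$ together with a factor $\omega_k(b)\omega_l(b)/2y'(b)$ localised at the second ramification point. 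I would then split the resulting sum over pairs of ramification points into the off-diagonal part $a\neq b$ and the diagonal part $a=b$.

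The off-diagonal part is the easy case: when $b\neq a$ the factors $B(q,b)$ and $B(\overline q,b)$ are holomorphic at $q=a$, so only the simple pole of $dE_q(p)/\omega(q)$ contributes. Integrating the remaining Bergman factor over $b_i$ produces $\omega_i(a)/2y'(a)$, and one reads off the bilinear term $\sum_{a\neq b} B(a,b)\left(\frac{\omega_i(a)\omega_j(a)}{2y'(a)}\right)\left(\frac{\omega_k(b)\omega_l(b)}{2y'(b)}\right)$ after accounting for the symmetrisation over $j,k,l$.

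The diagonal $a=b$ term is the main obstacle: here the integrand acquires a third-order pole at $q=a$, so the residue must be extracted by explicit Taylor expansion. I would expand each ingredient near $q=0$ to the required order --- the function $y(q)$, the differential $\omega(q) = 4q^2\big(y'(0)+\tfrac16 y'''(0)q^2+\cdots\big)dq$, the forms $\omega_i(q)$, and crucially the Bergman kernel on the diagonal, $B(q,a) = \big(q^{-2}+\tfrac16 S_B(a)+\cdots\big)dq\,da$, which is exactly where the Bergman projective connection $S_B$ enters --- together with the expansion $\frac{1}{2\pi i}\int_{p\in b_i} dE_q(p) = -q\big(\omega_i(0)+\tfrac16\omega_i''(0)q^2+\cdots\big)$. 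Adding the two contributions $B(q,a)\omega_j(\overline q)$ and $B(\overline q,a)\omega_j(q)$ so that the odd-order terms cancel, and reading off the coefficient of $dq/q$, yields the $\big(S_B(a)-y'''(a)/y'(a)\big)$ term and the $\omega_i''(a)$ term, each weighted by $1/8y'(a)^2$. The delicate points are the bookkeeping of the numerical factors coming from the $\tfrac12$ in $dE_q$ and the $4q^2$ in $\omega(q)$, and keeping track of which index permutations ($\text{perm}_{j,k,l}$) collapse to cyclic sums ($\text{cyc}_{i,j,k,l}$) upon symmetrisation; assembling the diagonal and off-diagonal pieces then gives the stated formula.
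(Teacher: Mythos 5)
Your proposal matches the paper's proof step for step: the $m=4$ specialisation of Theorem \ref{thm:derivtau} (with the correct sign check $-(i/2\pi)^3 = (1/2\pi i)^3$), substitution of the diagrammatic formula for $W^{(0)}_4$ from \cite[Eq.\ (4-46)]{EO}, conversion of Bergman kernels to normalised differentials via $b$-cycle integration, collapse of the inner residue by the identity $\underset{r\to b}{Res}\,\frac{dE_r(p)f(r)}{\omega(r)} = -\frac{B(b,p)}{4y'(0)}f(b)$, and the split into off-diagonal pairs (simple pole, bilinear $B(a,b)$ term) versus the diagonal $a=b$ case handled by Taylor expansion of $B(q,a)$, $\omega(q)$, $\omega_j(q)$ and $\frac{1}{2\pi i}\int_{p\in b_i}dE_q(p)$ to extract the coefficient of $dq/q$, producing the $S_B(a)-y'''(a)/y'(a)$ and $\omega_i''(a)$ terms. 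This is essentially identical to the paper's argument, down to the identification of where the Bergman projective connection enters and how the permutation sums collapse to cyclic sums.
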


\begin{remark}
Let us verify that the right hand side of (\ref{equ:2ndderiv}) is independent of the choice of local coordinates $q,x$ satisfying $x = q^2$, as it must be, since $\partial_i \partial_j \tau_{kl}$ is independent of such choices. Consider a change of variables $\hat{x} = h(x)$, $\hat{q} = f(q)$, where $x = q^2$ and $\hat{x}= \hat{q}^2$. If $h(x) = h'x + \frac{1}{2}h'' x^2 + \frac{1}{6} h''' x^3 + \cdots $ and $f(q) = f' q + \frac{1}{2} f'' q^2 + \frac{1}{6} f''' q^3 + \cdots$ then the relation $\hat{x} = \hat{q}^2$ implies $f''(0) = 0$. Let $S_B(a), \hat{S}_B(a)$ denote the Bergman projective connection in the $q$ and $\hat{q}$-coordinates. The property of being a projective connection means
\[
S_B(a) = (f')^2 \hat{S}_B(a) + \left( \frac{f'''}{f'} - \frac{3}{2}\frac{f''^2}{f'^2} \right),
\]
where $\left( \frac{f'''}{f'} - \frac{3}{2}\frac{f''^2}{f'^2} \right) = S(f)$ is the Schwarzian derivative of $f$ at $q=0$. From $\theta = y(q)dx = \hat{y}(\hat{q})d\hat{x}$, one finds 
\begin{equation}\label{equ:change1}
y'(0) = \hat{y}'(0) (f')^3
\end{equation}
and
\[
\frac{y'''(0)}{y'(0)} = \frac{\hat{y}'''(0)}{\hat{y}'(0)} (f')^2 + 5 \frac{f'''}{f'}.
\]
Then since $f'' = 0$, we get
\begin{equation}\label{equ:change2}
\left( S_B(a) - \frac{y'''(0)}{y'(0)} \right) = (f')^2 \left( \hat{S}_B(a) - \frac{\hat{y}'''(0)}{\hat{y}'(0)} \right) - 4 \frac{f'''}{f'}.
\end{equation}
But using $\omega_i(q)dq = \hat{\omega}_i(\hat{q})d\hat{q}$, we also find
\begin{align}
\omega_i(a) &= (f') \hat{\omega}_i(a) \label{equ:change3}\\
\omega''_i(a) &= (f')^3 \hat{\omega}''_i(a) + \hat{\omega}_i(a) f'''. \label{equ:change4}
\end{align}
Substituting (\ref{equ:change1})-(\ref{equ:change4}) into (\ref{equ:2ndderiv}), we see that the result is coordinate independent.
\end{remark}


\bibliographystyle{amsplain}

\end{document}